\documentclass[10pt,reqno,final]{article}
\UseRawInputEncoding
\usepackage{cite}
\usepackage{amsmath}
\usepackage{graphicx}
\usepackage{float}
\usepackage{caption}
\usepackage{amsxtra}
\usepackage{cases}
\usepackage{float}
\usepackage{amsmath}
\usepackage{fancyhdr}
\usepackage{caption,subcaption}
\usepackage{overpic}
\usepackage{epstopdf}
\usepackage{color}
\usepackage[justification=centering]{caption}
\bibliographystyle{abbrv}
\DeclareCaptionLabelFormat{cont}{#1~#2\alph{ContinuedFloat}}
\captionsetup[ContinuedFloat]{labelformat=cont}
\usepackage{amsthm}
\usepackage[title]{appendix}

\theoremstyle{plain}
\newtheorem{theorem}{Theorem}
\newtheorem{lemma}{Lemma}

\theoremstyle{definition}

\theoremstyle{remark}
\newtheorem{remark}{Remark}


\usepackage[justification=centering]{caption}
\usepackage{indentfirst}
\usepackage{enumerate}

\topmargin -1cm \textheight=22.5cm \textwidth=14.8cm

\setlength{\textwidth}{15cm}
\setlength{\textheight}{21.6cm}
\setlength{\oddsidemargin}{.5cm}
\setlength{\evensidemargin}{.5cm}
\baselineskip 13pt


\title{\Large
	A Degenerate Bifurcation Perspective on High Sensitivity in a Modified Gower-Leslie Model with Additive Allee Effect
	\thanks{ {E-mail: mathwangxiaoling@163.com, wukuilin@126.com, zoulan@cnu.edu.cn}.}
}

\author{{ Xiaoling Wang}$^{a}$, { Kuilin Wu}$^{a}$ and { Lan Zou}$^{b}\thanks{Corresponding author}$
\\
$^{a}$
{\small School of Mathematics and Statistics, Guizhou University, Guiyang 550025, PR China}
\\
$^{b}$
{\small School of Mathematical Sciences, Capital Normal University, Beijing 100048, PR China}
}

\date{}

\begin{document}
\captionsetup{justification=raggedright, singlelinecheck=false}
\captionsetup[figure]{labelfont={bf},name={Fig.},labelsep=period}

\maketitle
\maketitle
\begin{abstract}
The population dynamics in a modified Leslie-Gower model with an additive Allee effect are highly sensitive to both parameters and initial population densities, leading to outcomes ranging from coextinction to sustained multistable steady states. This work links this sensitivity to complicated bifurcations. We establish the existence of a codimension 4 nilpotent cusp and a corresponding degenerate Bogdanov-Takens bifurcation with codimension 4, which critically shape the system's response to parameter changes. Most significantly, we prove that the Hopf bifurcation occurring at a center-type equilibrium can give rise to up to five limit cycles¡ªa phenomenon scarcely documented in previous ecological studies¡ªthereby inducing a pronounced dependence of oscillatory regimes on initial conditions. Numerical simulations confirming heteroclinic loops and multiple limit cycles provide consistent support for the theoretical analysis.

		\smallskip
		
		\noindent
		{\bf Key words:}
		Modified Leslie-Gower type; Additive Allee effect; Nilpotent cusp of codimension 4; Weak focus of order 5; Limit cycles
\end{abstract}

\section{Introduction}

Predation is one of the most basic interactions in population biology. Since the classic Lotka-Volterra model was independently developed by Lotka and Volterra \cite{AJA,V}, many people have considered various factors \cite{GSK,EE,A} and explored some interesting biological phenomena more extensively, One of the most popular systems has been proposed and studied by Freedman and Mathsen \cite{IM}, Hsu and Huang \cite{BW} respectivel, called Leslie type with the following form
\begin{equation}
\begin{array}{ll}\label{1.1}
\left\{
\begin{aligned}
\dot{x}&=xg(x,K)-yp(x),\\[2ex]
\dot{y}&=y(h-\frac{ny}{x}),
\end{aligned}
\right.
\end{array}
\end{equation}
where $x$ and $y$ denote densities of the prey and predators at time $t$, respectively. $K$ denotes the carrying capacity of the prey in the absence of predators. $h$ is the intrinsic growth rates of predator and $n$ is a measure of the food quality that the prey provides for conversion into predator birth. $g(x,K)$ describes the specific growth rate of the prey in the absence of predators and satisfies $g(0,K)=r>0$, $g(K,K)=0$, $g_{x}(K,K)<0$, $g_{x}(x,K)\leq0$, and $g_{K}(x,K)>0$ for any $x>0$. $p(x)$ describes the change in the density of the prey attacked per unit time per predator as the prey density changes. In model (\ref{1.1}), Leslie-Gower term $ny/x$ descries the carrying capacity of the predator's environment is proportional to the number of prey. It is applicable for many species of animals. For example, Hanski et al. believed that it fit the dynamics of vole-weasel interaction \cite{HHH}.

Some complicated and challenging dynamics of  system \eqref{1.1} have been detected mathematically. Li and Xiao \cite{YD} detected that system \eqref{1.1} with simplified Holling type IV function can undergo Bogdanov-Takens bifurcation of codimension 2 and subcritical Hopf bifurcation. Later, Huang et al. \cite{JS} showed that system \eqref{1.1} with generalized Holling type III functional response can exhibit subcritical Hopf bifurcation and degenerate focus type Bogdanov-Takens bifurcation of codimension 3. Dai and Zhao \cite{YY} showed that system \eqref{1.1} can exhibit a weak focus and its order is at most 2 which has Hopf cyclicity 2. Besides, for two anti-saddles, they showed that the Hopf cyclicity for positive equilibrium with smaller abscissa (resp. bigger abscissa) is 2 (resp. 1). Huang et al. \cite{JX} showed that system \eqref{1.1} with simplified Holling type IV functional response can undergo degenerate Bogdanov-Takens singularity (focus case) of codimension 3. Zhu and Zou found three limit cycles generating from Hopf bifurcation for system \eqref{1.1} with mating difficulty and Holling type IV functional response in \cite{ZZ}. Different types of functional response functions were introduced in \cite{JS}. Here are some other excellent research, see \cite{HA,PA,AE,ST,T}.

In fact, some predators may switch to alternative food to avoid extinction, when their growth is limited by the scarcity or insufficient supply of their preferred prey $x$, although they have their favorite food. This situation can be modified by adding a positive additional constant $d$ to the denominator of Leslie-Gower term  and system \eqref{1.1} is modified as
\begin{equation}
\begin{array}{ll}\label{1.2}
\left\{
\begin{aligned}
\dot{x}&=xg(x,K)-yp(x),\\[2ex]
\dot{y}&=y(h-\frac{ny}{x+d}),
\end{aligned}
\right.
\end{array}
\end{equation}
where $d$ denotes the measure of alternative food available to predators. This system is called the modified Leslie-Gower predator-prey model. In fact, although the least weasel {\it Mustela nivalis} modeled in \cite{HHH} is a highly specialized predator of small rodents, with a particular preference for {\it Microtus} voles, the alternative prey in weasels' diet has been shown as passerine bird nestlings and young rabbits, when {\it Microtus} voles were scarce or absent \cite{PH}.

Jia et al. \cite{XM} showed that system \eqref{1.2} exhibit complicated dynamics including the existence of a limit cycle, a homoclinic loop, two limit cycles, or both a limit cycle and a homoclinic loop, as well as Bogdanov-Takens bifurcations of codimension 2 and 3. Tian and Liu \cite{JP} investigated the steady-state bifurcation and the existence, direction and stability of periodic orbits of system \eqref{1.2} with Beddington-DeAngelis functional response. Aziz-Alaoui and Okiye \cite{AD} detected the boundedness of solutions, existence of an attracting set and global stability of the coexisting interior equilibrium for system \eqref{1.2} with Holling II functional response.

The Allee effect is characterized by a positive relationship between population density and per capita growth rate at low densities. This occurs because reduced fitness in sparse populations, due to mechanisms like impaired mate-finding or cooperation, drives a trajectory toward extinction \cite{AJ,PW,AF,ZZ}.
In many predator-prey models, the Allee effect is treated as a function independent of the predation function, yet the specific form of this function governs the expansion of the bistability regime. The most usual continuous growth equation to express the Allee effect is given by multiplicative Allee effect:
\begin{equation}
\begin{array}{ll}\label{1.3}
\begin{aligned}
g(x,K)&=r(1-\frac{x}{K})(x-M),
\end{aligned}
\end{array}
\end{equation}
where $r$ and $K$ denote the intrinsic growth rate and the carrying capacity of the prey in the absence of predators, respectively. $-K<M<K$ represents the threshold of Allee effects in prey, and $0<M<K$ denotes the strong Allee effect while $-K<M<0$ denotes the weak Allee effect.

For system \eqref{1.1} with \eqref{1.3}, Shang and Qiao \cite{ZY} showed that system \eqref{1.1} with simplified Holling type IV functional response and strong Allee effect on prey can exhibit saddle-node bifurcation, Hopf bifurcation, degenerate Hopf bifurcation and Bogdanov-Takens bifurcation of codimensions 2 and 3. Huang et al. found the coexistence of 3 limit cycles in this system in \cite{JMC}. For system \eqref{1.2} with \eqref{1.3}, Arancibia-Ibarra and Gonz$\acute{a}$lez-Olivares \cite{CE} detected system \eqref{1.2} can exhibit a stable limit cycle. Khanghahi and Ghaziani \cite{JK} found that \eqref{1.3} system \eqref{1.2} can undergo saddle-node, Hopf, Bogdanov-Takens and generalized Hopf bifurcations. Guo et al. \cite{XL} examined the properties of the equilibrium and existence of saddle-node bifurcation.

Another population growth  deduced in \cite{PW,B}
\begin{equation}
\begin{array}{ll}\label{1.4}
\begin{aligned}
\dot{x}&=rx(1-\frac{x}{K}-\frac{A}{x+B}),
\end{aligned}
\end{array}
\end{equation}
is called the additive Allee effect. The term $\frac{A}{x+B}$ can induce either weak or strong Allee effects in the absence of other population intersections. The positive constants $A$ and $B$ characterize the Allee effect, where
$B$ denotes the half-maximum fitness population size and $A$ scales its severity. From \cite{HM}, it then follows: if $0<A<B$, the Allee effect in \eqref{1.4} is the weak one; if $A>B$, the Allee effect in \eqref{1.4} is the strong one.
The additive Allee effect describes a population dynamic in which the per capita growth rate increases at low densities due to heightened positive interactions among individuals, such as cooperative reproduction and group defense. Different from \eqref{1.3}, the additive Allee effect assumes that the impact of positive interactions on population growth is additive, which is independent of population density.


An extensive number of studies have incorporated the additive Allee effect into predator-prey models.  Aguirre et al. \cite{PE} showed that the system allows the existence of a stable limit cycle surrounding an unstable limit cycle generated by Hopf bifurcation. Besides, they also found that the system allows the coexistence of three limit cycles in \cite{PEE}. Suryanto, Darti and Anam \cite{AI} analyzed the existence and local stability of equilibria for system \eqref{1.2} with additive Allee effect. Cai et al. \cite{YC} studied the impact of additive Allee effect on system \eqref{1.2} with \eqref{1.4} by demonstrating the existence of a Hopf bifurcation and confirming that this effect elevates the risk of ecological extinction.

In this paper, we focus on a modified Leslie-Gower model \eqref{1.2} with Lotka-Volterra type functional response and the additive Allee effect on prey as follows
\begin{equation}
\begin{array}{ll}\label{1.5}
\left\{
\begin{aligned}
\dot{x}&=rx(1-\frac{x}{K}-\frac{A}{x+B})-pxy,\\[2ex]
\dot{y}&=y(h-\frac{ny}{x+d}).
\end{aligned}
\right.
\end{array}
\end{equation}
Taking $x=K\bar{x}$, $y=\frac{Kh}{n}\bar{y}$ and $t=\frac{1}{r}\bar{t}$,  system \eqref{1.5} can be written as follows (still denote $\bar{x}$, $\bar{y}$ and $\bar{t}$ by $x$, $y$ and $t$, respectively).
\begin{equation}
\begin{array}{ll}\label{2.1}
\left\{
\begin{aligned}
\dot{x}&=x(1-x)-\gamma xy-\frac{\beta x}{x+\alpha},\\[2ex]
\dot{y}&=\delta y(1-\frac{y}{x+\eta}),
\end{aligned}
\right.
\end{array}
\end{equation}
where $\gamma=\frac{Kph}{rn}$, $\alpha=\frac{B}{K}$, $\delta=\frac{h}{r}$, $\beta=\frac{A}{K}$ and $\eta=\frac{d}{K}$. They are all positive parameters. From the biological point view, we consider system \eqref{2.1} in $\mathbf{R}^2_+=\{(x,y)|x\geq0, y\geq0\}$.

We  show that system \eqref{1.5} can exhibit saddle-node bifurcation,  nilpotent cusp bifurcation of codimension  4 and Hopf bifurcation. In particular, we detect that  system \eqref{1.5} can emerge as many as 5 limit cycles via Hopf bifurcation. As our known, this is a new upper bound for the number of  limit cycles of predator-prey models. Compared with system \eqref{1.2} without the additive Allee effect, our results indicate that the additive Allee effect can cause richer dynamical behaviors and bifurcation phenomena. Besides, it can cause the coextinction of both populations even with some positive initial densities. Ecologically, the interaction of steppe polecat ({\it{Mustela eversmanni larvatus}}) and Plateau zoker ({\it{Myospalax baileyi}}) in Qinghai-Tibet Plateau is a proper example of predator-prey for model \eqref{1.5}.

This paper is structured as follows. In section 2, we consider the extinction and persistence of populations, and the existence and types of boundary equilibria and non-hyperbolic positive equilibria of system \eqref{2.1} are presented. In section 3, we show that system \eqref{2.1} can undergo saddle-node bifurcation and degenerate Bogdanov-Takens bifurcation of codimension 4. In section 4, we investigate system \eqref{2.1} admits five limit cycles bifurcated from Hopf bifurcation, and exhibit a pronounced dependence of oscillatory regimes on initial conditions by numerical simulations. A brief discussion is given in the last section.

\section{Equilibria analysis}



In order to clarify the extinction and persistence of both populations with positive initial densities, we deal with the boundedness for system \eqref{2.1} in $\Omega$ firstly, where
\begin{equation*}
\begin{array}{ll}
\begin{aligned}
\Omega=\{(x,y)|x>0,y>0\},\quad\Gamma=\{(x,y)|0<x<1,y<\eta+1\}.
\end{aligned}
\end{array}
\end{equation*}
Letting $t=(x+\alpha)(x+\eta)\tau$,  system \eqref{2.1} can be written as (still denote $\tau$ by $t$)
\begin{equation}
\begin{array}{ll}\label{1.11}
\left\{
\begin{aligned}
\dot{x}&=x(x+\eta)\big((1-x)(x+\alpha)-\gamma y(x+\alpha)-\beta\big),\\[2ex]
\dot{y}&=\delta y(x+\alpha)(x+\eta-y).
\end{aligned}
\right.
\end{array}
\end{equation}
It is easy to see that system \eqref{2.1} and \eqref{1.11} are topologically equivalent in the region $\Omega$ since $(x+\alpha)(x+\eta)>0$ holds for $x>0$. It is clear that there exists $T>0$ such that all trajectories $(x(t),y(t))$ of system \eqref{1.11} in $\Omega$ enter and remain in $\Gamma$ for all $t>T$.
\begin{lemma}\label{t6}
There exists $T>0$ such that all trajectories $(x(t),y(t))$ of system \eqref{2.1} originating in $\Omega$ enter and remain in $\Gamma$ for all $t>T$.
\end{lemma}

\subsection{Boundary equilibria}

To understand the details of extinction of either prey or predator, we need to analysis boundary equilibria for system \eqref{2.1}.
Denoting $\Delta_1=(1+\alpha)^2-4\beta$, system \eqref{2.1} has at most four boundary equilibria $E_0(0,0)$, $E_1(0,\eta)$, $E_2\big(\frac{1}{2}(1-\alpha-\sqrt{\Delta_1}),0\big)$ and $E_3\big(\frac{1}{2}(1-\alpha+\sqrt{\Delta_1}),0\big)$. By Chapter 2 of \cite{ZT}, we have the following results.
\begin{lemma}\label{l2}
System \eqref{2.1} has at most four boundary equilibria, and the following statements hold.
\begin{enumerate}
\item[\bf(i)] When $\alpha<1$ and $\alpha<\beta<\frac{(1+\alpha)^2}{4}$, system \eqref{2.1} has four boundary equilibria $E_0$, $E_1$, $E_2$ and $E_3$, where $E_0$ is a saddle, $E_1$ is a stable node, $E_2$ is an unstable node and $E_3$ is a saddle, whose phase portrait is shown in Fig. \ref{z1}(a).

\item[\bf(ii)] When $\beta<\alpha$, system \eqref{2.1} has three boundary equilibria $E_0$, $E_1$ and $E_3$, where $E_0$ is an unstable node, $E_1$ is a saddle (stable node) when $\gamma<\frac{\alpha-\beta}{\alpha\eta}$ ($\gamma>\frac{\alpha-\beta}{\alpha\eta}$) and $E_3$ is a saddle, the phase portraits are shown in Fig. \ref{z1}(b) and Fig. \ref{z1}(c).

\item[\bf(iii)] When either $\alpha>1$ and $\alpha<\beta<\frac{(1+\alpha)^2}{4}$, or $\beta>\frac{(1+\alpha)^2}{4}$, system \eqref{2.1} has two boundary equilibria $E_0$ and $E_1$, where $E_0$ is a saddle and $E_1$ is stable node, the phase portraits are shown in Fig. \ref{z1}(d).

\item[\bf(iv)] When $\beta=\frac{(1+\alpha)^2}{4}$, the following statements holds.
  \begin{enumerate}
  \item[\bf(iv-1)] If $\alpha<1$, system \eqref{2.1} has three boundary equilibria $E_0$, $E_1$ and $E_4(\frac{1-\alpha}{2},0)$, where $E_0$ is a saddle, $E_1$ is a stable node and $E_4$ is a saddle-node with a stable parabolic sector in the right half plane of $\mathbf{R}^2_+$, whose phase portrait is shown in Fig. \ref{z1}(e).

  \item[\bf(iv-2)] If $\alpha>1$, system \eqref{2.1} has two boundary equilibria $E_0$ and $E_1$, where $E_0$ is a saddle and $E_1$ is a stable node, whose phase portrait is shown in Fig. \ref{z1}(f).
\end{enumerate}
\item[\bf(v)] When $\alpha=1$ and $\beta<1$, system \eqref{2.1} has three boundary equilibria $E_0$, $E_1$ and $E_5(\sqrt{1-\beta},0)$, where $E_0$ is an unstable node, $E_1$ is a stable node (saddle) when $\gamma>\frac{1-\beta}{\eta}$ ($\gamma<\frac{1-\beta}{\eta}$) and $E_5$ is a saddle, whose phase portrait is shown in Fig. \ref{z0}(a) and Fig. \ref{z0}(b).

\item[\bf(vi)] When $\alpha=\beta$, the following statements holds.
  \begin{enumerate}
  \item[\bf(vi-1)] If $\beta<1$, system \eqref{2.1} has three boundary equilibria $E_0$, $E_1$ and $E_6(1-\beta,0)$, where $E_0$ is saddle-node, $E_1$ is a stable node and $E_6$ is a saddle, whose phase portrait is shown in Fig. \ref{z0}(c).

  \item[\bf(vi-2)] If $\beta>1$, system \eqref{2.1} has two boundary equilibria $E_0$ and $E_1$, where $E_0$ is a saddle-node and $E_1$ is a stable node, whose phase portraits are shown in Fig. \ref{z0}(d).
  \end{enumerate}
\end{enumerate}
\end{lemma}

\begin{figure}[ht!]
\centering
\begin{subfigure}{0.45\linewidth}
\centering
\includegraphics[width=0.9\linewidth]{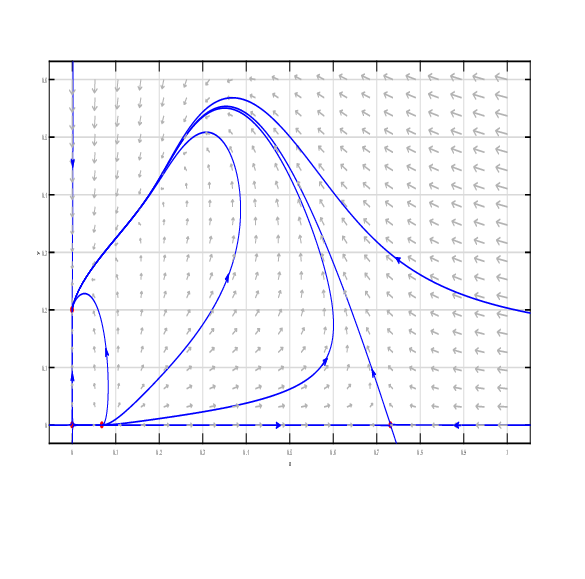}
\put(-162,41){$E_0$}
\put(-163,79){$E_1$}
\put(-139,41){$E_2$}
\put(-58,41){$E_3$}
\put(-101,10){$(a)$}
\end{subfigure}
\centering
\begin{subfigure}{0.45\linewidth}
\centering
\includegraphics[width=0.9\linewidth]{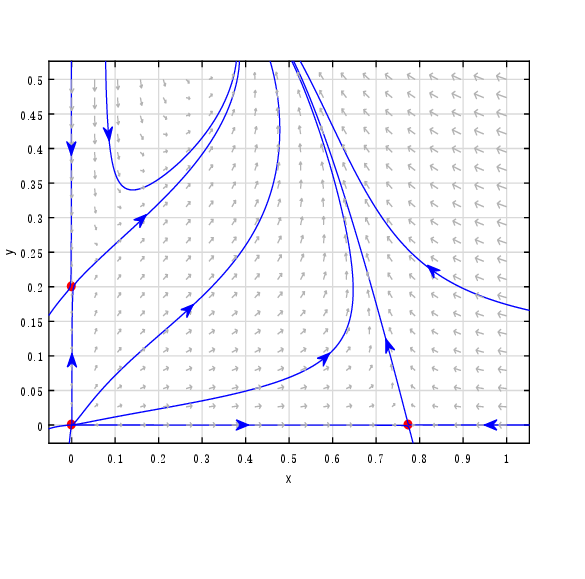}
\put(-162,41){$E_0$}
\put(-163,79){$E_1$}
\put(-53,41){$E_3$}
\put(-90,10){$(b)$}
\end{subfigure}

\centering
\begin{subfigure}{0.45\linewidth}
\centering
\includegraphics[width=0.9\linewidth]{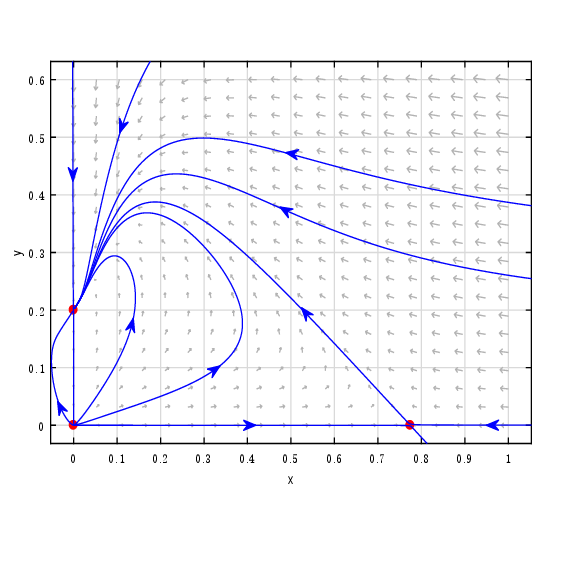}
\put(-162,41){$E_0$}
\put(-163,79){$E_1$}
\put(-53,41){$E_3$}
\put(-101,10){$(c)$}
\end{subfigure}
\centering
\begin{subfigure}{0.45\linewidth}
\centering
\includegraphics[width=0.9\linewidth]{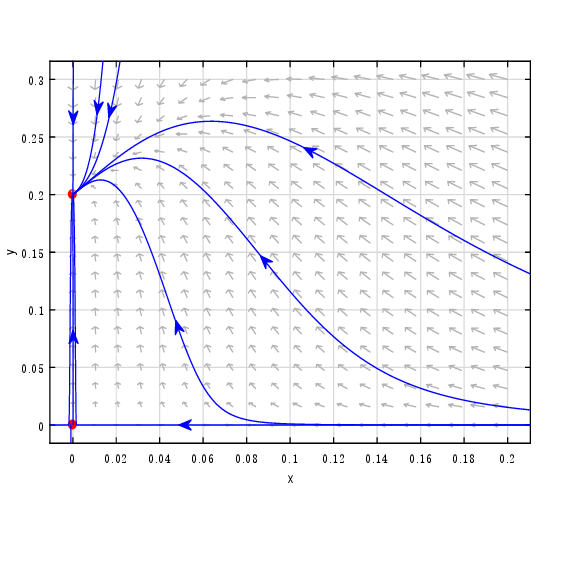}
\put(-162,41){$E_0$}
\put(-163,108){$E_1$}
\put(-90,10){$(d)$}
\vspace{3mm}
\end{subfigure}

\centering
\begin{subfigure}{0.45\linewidth}
\centering
\includegraphics[width=0.9\linewidth]{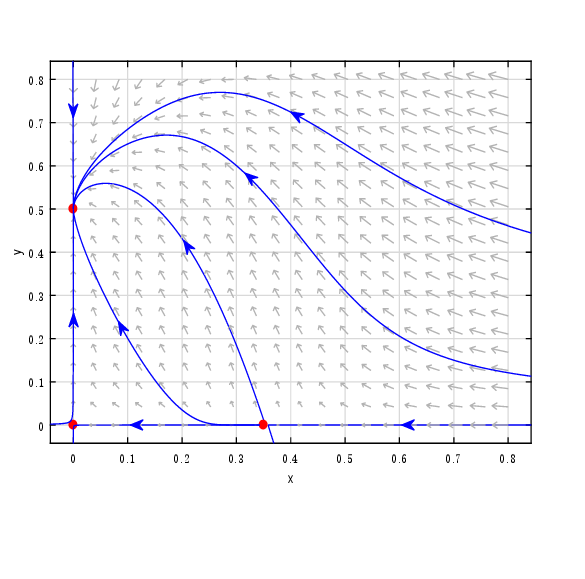}
\put(-162,41){$E_0$}
\put(-163,103){$E_1$}
\put(-96,41){$E_4$}
\put(-101,10){$(e)$}
\end{subfigure}
\centering
\begin{subfigure}{0.45\linewidth}
\centering
\includegraphics[width=0.9\linewidth]{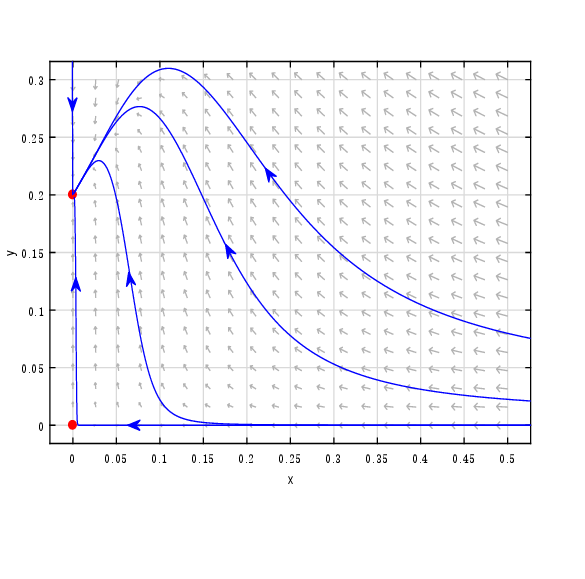}
\put(-162,41){$E_0$}
\put(-163,108){$E_1$}
\put(-90,10){$(f)$}
\vspace{3mm}
\end{subfigure}
\captionsetup{justification=centering}
\caption{The local phase portrait for boundary equilibria for system \eqref{2.1} in Lemma \ref{l2}. (a) case (i); (b) case (ii) with and $\gamma<\frac{\alpha-\beta}{\alpha\eta}$; (c) case (ii) with  $\gamma>\frac{\alpha-\beta}{\alpha\eta}$; (d) case (iii); (e) case (iv-1); (f) case (iv-2).}
\label{z1}
\end{figure}

\begin{figure}[ht!]
\centering
\begin{subfigure}{0.45\linewidth}
\centering
\includegraphics[width=0.9\linewidth]{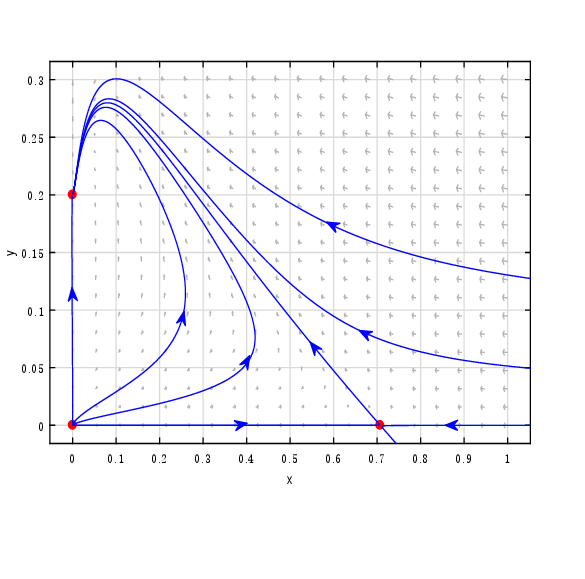}
\put(-162,41){$E_0$}
\put(-163,109){$E_1$}
\put(-58,41){$E_5$}
\put(-101,10){$(a)$}
\end{subfigure}
\centering
\begin{subfigure}{0.45\linewidth}
\centering
\includegraphics[width=0.9\linewidth]{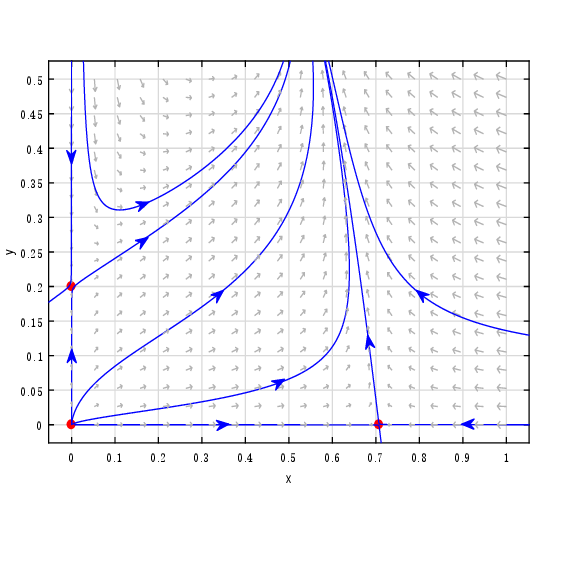}
\put(-162,41){$E_0$}
\put(-163,79){$E_1$}
\put(-59,41){$E_5$}
\put(-90,10){$(b)$}
\end{subfigure}

\centering
\begin{subfigure}{0.45\linewidth}
\centering
\includegraphics[width=0.9\linewidth]{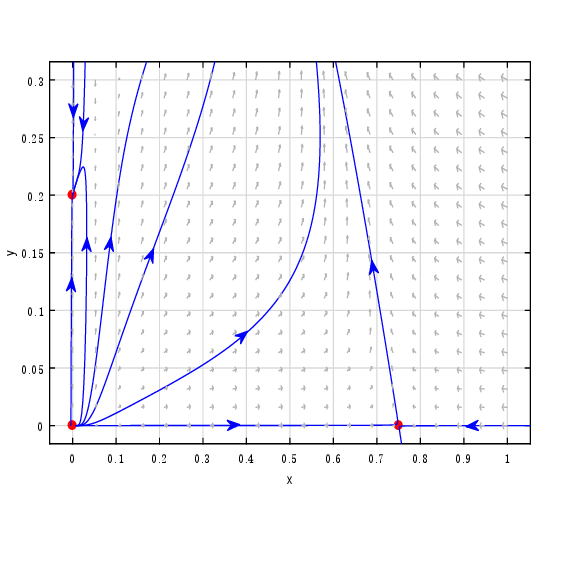}
\put(-162,41){$E_0$}
\put(-163,105){$E_1$}
\put(-53,41){$E_6$}
\put(-101,10){$(c)$}
\end{subfigure}
\centering
\begin{subfigure}{0.45\linewidth}
\centering
\includegraphics[width=0.9\linewidth]{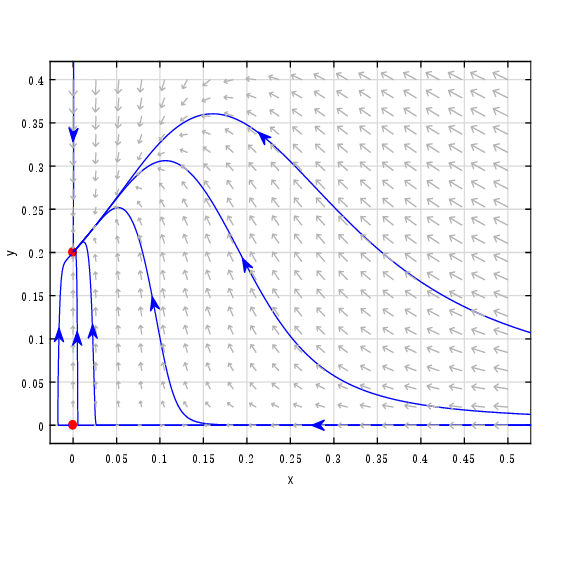}
\put(-162,41){$E_0$}
\put(-163,100){$E_1$}
\put(-90,10){$(d)$}
\vspace{3mm}
\end{subfigure}
\captionsetup{justification=centering}
\caption{The local phase portrait for boundary equilibria for system \eqref{2.1} in Lemma \ref{l2}. (a) case (v) with $\gamma>\frac{1-\beta}{\eta}$; (b) case (v) with $\gamma<\frac{1-\beta}{\eta}$; (c) case (vi-1); (d) case (vi-2).}
\label{z0}
\end{figure}
The proof of $E_4$ in Lemma \ref{l2} is deferred to Appendix A, whereas the results for the remaining equilibria are readily derived through a qualitative analysis of them.
Furthermore, from the simulations, we found lots of heteroclinic loops in figures \ref{z1} and \ref{z0}.

\subsection{Positive equilibria}

In this subsection, we focus on the positive equilibria of system \eqref{2.1}. The determinant and the trace of Jacobian matrix of system \eqref{2.1} at the positive equilibrium $E(x,y)$ are given by
\begin{equation*}
\begin{array}{ll}
\begin{aligned}
Det\big(J(E)\big)&=\delta x\bigg(1+\gamma-\frac{\beta}{(x+\alpha)^2}\bigg),\\[2ex]
Tr\big(J(E)\big)&=x(-1+\frac{\beta}{(x+\alpha)^2})-\delta.
\end{aligned}
\end{array}
\end{equation*}
The positive equilibrium $E(x,y)$ of system \eqref{2.1} must satisfy $y=x+\eta$, where $x$ is a positive real root of the equation
\begin{equation}
\begin{array}{ll}\label{2.2}
\begin{aligned}
(1+\gamma)x^2+\big(\gamma(\alpha+\eta)+\alpha-1\big)x+\alpha(\gamma\eta-1)+\beta=0.
\end{aligned}
\end{array}
\end{equation}
Letting
\begin{equation*}
\begin{array}{ll}
\begin{aligned}
\Delta_2=\big(\gamma(\alpha+\eta)+\alpha-1\big)^2-4(1+\gamma)\big(\alpha(\gamma\eta-1)+\beta\big),
\end{aligned}
\end{array}
\end{equation*}
by Theorem 7.1 in chapter 2 of \cite{ZT}, we derive the following results.
\begin{lemma}\label{t9}
For system \eqref{2.1}, the following statements hold.
\begin{enumerate}
\item[\bf(i)]  When $\beta<\frac{(1+\alpha+\alpha\gamma-\gamma\eta)^2}{4(1+\gamma)}$, system \eqref{2.1} admits at most two equilibria $E_1^*(x_1^*,y_1^*)$ and $E_2^*(x_2^*,y_2^*)$, where $x_{1,2}^*=\frac{-\big(\gamma(\alpha+\eta)+\alpha-1\big)\pm\sqrt{\Delta_2}}{2(1+\gamma)}$, $y_{1,2}^*=x_{1,2}^*+\eta$. In addition, the following two statements hold:
\begin{enumerate}
\item[\bf(a1)] For $\alpha<1$, $\gamma<\frac{1-\alpha}{\alpha+\eta}$ and $\alpha(1-\gamma\eta)<\beta$, system \eqref{2.1} possesses two positive equilibria $E_1^*$ and $E_2^*$, where $E_1^*$ is a saddle and $E_2^*$ is either a node or a focus, the phase portraits are shown in Fig. \ref{z2}(a) and Fig. \ref{z2}(b).

\item[\bf(a2)] For $\gamma<\frac{1}{\eta}$ and $\beta<\alpha(1-\gamma\eta)$, system \eqref{2.1} has a unique positive equilibrium $E_2^*$, which is either a node or a focus, the phase portraits are shown in Fig. \ref{z2}(c) and Fig. \ref{z2}(d).
\end{enumerate}

\item[\bf(ii)]  When $\beta=\frac{(1+\alpha+\alpha\gamma-\gamma\eta)^2}{4(1+\gamma)}$, $\alpha<1$ and $\gamma<\frac{1-\alpha}{\alpha+\eta}$, system \eqref{2.1} has an unique equilibrium $E^*(x^*,y^*)$ is a saddle-node with an unstable parabolic sector in $\mathbf{R}^2_+$ when $\delta\neq\frac{\gamma(1-\alpha-\alpha\gamma-\gamma\eta)}{2(1+\gamma)}$, where $x^*=\frac{1-\alpha-\alpha\gamma-\gamma\eta}{2(1+\gamma)}$, $y^*=x^*+\eta$, and the phase portrait is shown in Fig. \ref{2}.

\item[\bf(iii)] Otherwise, system \eqref{2.1} has no positive equilibria.
\end{enumerate}
\end{lemma}
\begin{figure}
\begin{center}
\begin{overpic}[scale=0.50]{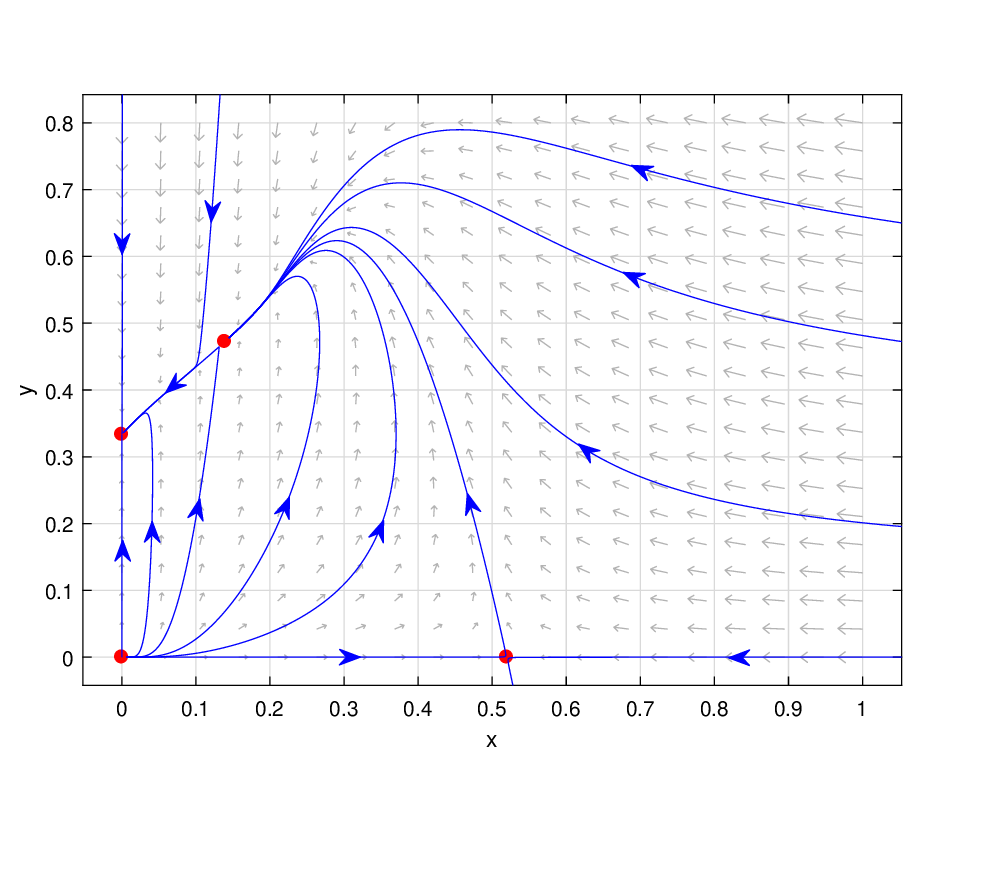}
\end{overpic}
\vspace{-15mm}
\put(-194,120){$E^*$}
\put(-214,50){$E_0$}
\put(-214,103){$E_1$}
\put(-122,50){$E_3$}
\end{center}
\caption{Case (ii) in Lemma \ref{t9} ($E^*$ is a saddle-node with an unstable parabolic sector).\label{2}}
\end{figure}
\begin{figure}[ht!]
\centering
\begin{subfigure}{0.45\linewidth}
\centering
\includegraphics[width=0.9\linewidth]{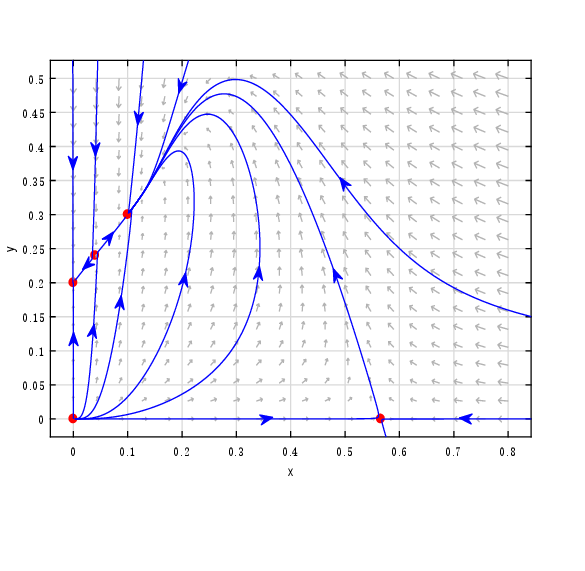}
\put(-162,41){$E_0$}
\put(-163,85){$E_1$}
\put(-58,41){$E_3$}
\put(-150,90){$E_1^{**}$}
\put(-145,103){$E_2^{**}$}
\put(-101,10){$(a)$}
\end{subfigure}
\centering
\begin{subfigure}{0.45\linewidth}
\centering
\includegraphics[width=0.9\linewidth]{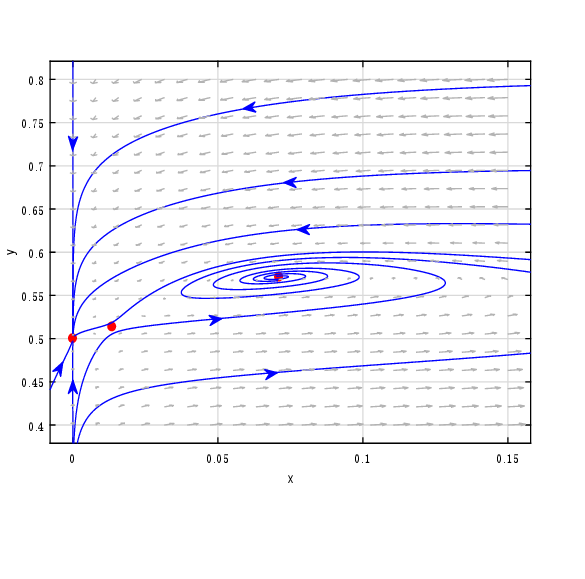}
\put(-163,65){$E_1$}
\put(-150,67){$E_1^{**}$}
\put(-87,91){$E_2^{**}$}
\put(-90,10){$(b)$}
\end{subfigure}

\centering
\begin{subfigure}{0.45\linewidth}
\centering
\includegraphics[width=0.9\linewidth]{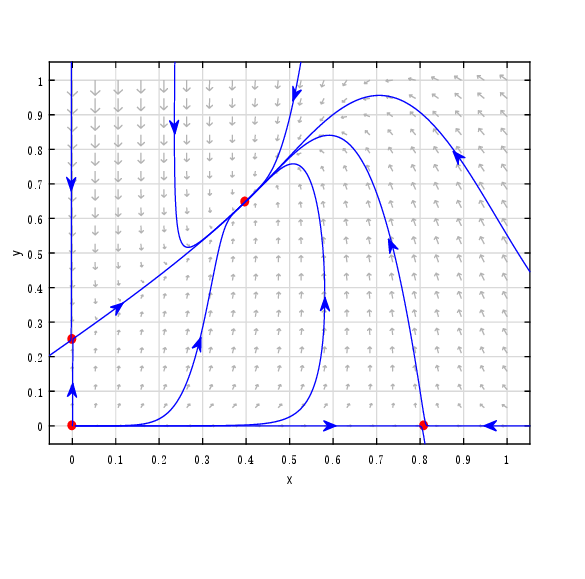}
\put(-162,41){$E_0$}
\put(-163,63){$E_1$}
\put(-59,41){$E_3$}
\put(-105,103){$E_2^{**}$}
\put(-101,10){$(c)$}
\end{subfigure}
\centering
\begin{subfigure}{0.45\linewidth}
\centering
\includegraphics[width=0.9\linewidth]{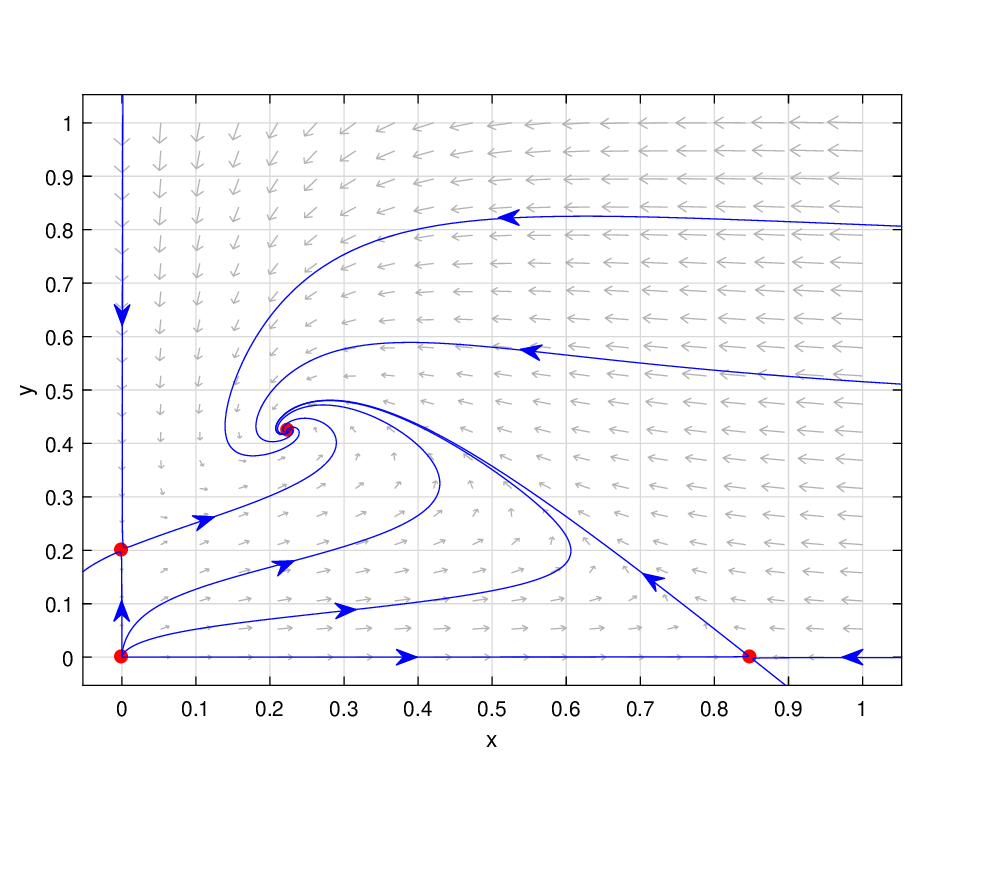}
\put(-162,41){$E_0$}
\put(-163,72){$E_1$}
\put(-56,41){$E_3$}
\put(-120,75){$E_2^{**}$}
\put(-90,10){$(d)$}
\vspace{3mm}
\end{subfigure}
\captionsetup{justification=centering}
\caption{The local phase portrait for positive equilibria for system \eqref{2.1} in Lemma \ref{t9}. (a) Case (i-a1) with a stable node $E_2^*$; (b) Case (i-a1) with a stable focus $E_2^*$; (c) Case (i-a2) with a stable node $E_2^*$; (d)  Case (i-a2) with a stable focus $E_2^*$.}
\label{z2}
\end{figure}
\begin{proof}
The proofs (i) and (ii) of the lemma can be obtained by simply calculation. We will prove (iii) of the lemma in details. Making the following transformations successively
\begin{equation*}
\begin{array}{ll}
\begin{aligned}
x&=X+\frac{1-\alpha-\alpha\gamma-\gamma\eta}{2(1+\gamma)},\quad y=Y+\frac{1-\alpha-\alpha\gamma+\gamma\eta+2\eta}{2(1+\gamma)}\quad and\\[2ex]
X&=x+\frac{\gamma(1-\alpha-\alpha\gamma-\gamma\eta)}{2\delta(1+\gamma)}y,\quad Y=x+y, t=\frac{2(1+\gamma)}{\gamma(1-\alpha-\alpha\gamma-\gamma\eta)-2\delta(1+\gamma)}\tau,
\end{aligned}
\end{array}
\end{equation*}
 system \eqref{2.1} can be written as (still denote $\tau$ by $t$)
\begin{equation}
\begin{array}{ll}\label{1.6}
\left\{
\begin{aligned}
\dot{x}&=\sum\limits_{i+j=2}\hat{a}_{ij}x^{i}y^{j}+o(|x,y|^{3}),\\[2ex]
\dot{y}&=y+\sum\limits_{i+j=2}\hat{b}_{ij}x^{i}y^{j}+o(|x,y|^{3}),
\end{aligned}
\right.
\end{array}
\end{equation}
where $$\hat{a}_{20}=-\frac{4\delta(1+\gamma)^3(\alpha+\alpha\gamma+\gamma\eta-1)}{(1+\alpha+\alpha\gamma-\gamma\eta)\big(\gamma(1-\alpha-\alpha\gamma-\gamma\eta)-2\delta(1+\gamma)\big)^2}>0$$ when $\alpha<1$, $\gamma<\frac{1-\alpha}{\alpha+\eta}$ and $\delta\neq\frac{\gamma(1-\alpha-\alpha\gamma-\gamma\eta)}{2(1+\gamma)}$. The expressions of $\hat{a}_{ij}$ and $\hat{b}_{ij}$ are omitted here for brevity.
By the Center Manifold Theorem, we have the reduced equation restricted to the center manifold as follows
\begin{equation}
\begin{array}{ll}\label{1.7}
\begin{aligned}
\dot{x}=\hat{a}_{20}x^2+o(|x|^2).
\end{aligned}
\end{array}
\end{equation}
By Theorem 7.1 in Chapter 2 of  \cite{ZT}, $E^*$ is a saddle-node with an unstable parabolic sector.
\end{proof}

In what follows, we deal with the cusp of system \eqref{2.1}, and obtain the following result:
\begin{theorem}\label{cusp}
When $\beta=\frac{(1+\alpha+\alpha\gamma-\gamma\eta)^2}{4(1+\gamma)}$, $\alpha<1$ and $\gamma<\frac{1-\alpha}{\alpha+\eta}$, two positive equilibria coincide to a positive equilibrium. Moreover, if $\delta=\frac{\gamma(1-\alpha-\alpha\gamma-\gamma\eta)}{2(1+\gamma)}$, system  may appear nilpotent cusp of codimension up to 4.
\end{theorem}
The proof will be completed by the following analysis and lemmas.

When $\beta=\frac{(1+\alpha+\alpha\gamma-\gamma\eta)^2}{4(1+\gamma)}$ and $\delta=\frac{\gamma(1-\alpha-\alpha\gamma-\gamma\eta)}{2(1+\gamma)}$, then system \eqref{2.1} has a degenerate positive equilibrium $E^*(\frac{1-\alpha-\alpha\gamma-\gamma\eta}{2(1+\gamma)},\frac{1-\alpha-\alpha\gamma+\gamma\eta+2\eta}{2(1+\gamma)})$. It is not difficult to see that $\beta>0$ and $\delta>0$ if and only if $(\beta,\delta)\in\Pi$, where
\begin{equation*}
\begin{array}{ll}
\begin{aligned}
\Pi:=\{(\beta,\delta)\in\mathbf{R}^2_+|0<\alpha<1, 0<\gamma<\frac{1-\alpha}{\alpha+\eta}, \eta>0\}.
\end{aligned}
\end{array}
\end{equation*}
In order to investigate  equilibrium  $E^*$, we make the following transformations successively
\begin{equation*}
\begin{array}{ll}
\begin{aligned}
x&=X+\frac{1-\alpha-\alpha\gamma-\gamma\eta}{2(1+\gamma)},\quad y=Y+\frac{1-\alpha-\alpha\gamma+\gamma\eta+2\eta}{2(1+\gamma)};\\[2ex]
X&=x+\frac{2(1+\gamma)}{\gamma(1-\alpha-\alpha\gamma-\gamma\eta)}y,\quad Y=x,
\end{aligned}
\end{array}
\end{equation*}
Then system \eqref{2.1} is changed into
\begin{equation}
\begin{array}{ll}\label{2.4}
\left\{
\begin{aligned}
\dot{x}&=y+\sum\limits_{2\leq i+j\leq5}a_{ij}x^{i}y^{j}+o(|x,y|^{5}),\\[2ex]
\dot{y}&=\sum\limits_{2\leq i+j\leq5}b_{ij}x^{i}y^{j}+o(|x,y|^{5}),
\end{aligned}
\right.
\end{array}
\end{equation}
where $a_{ij}$ and $b_{ij}$ are given in Appendix C.

Setting $x=X+\frac{b_{02}}{2}X^2+a_{02}XY$, $y=Y+b_{02}XY$, system \eqref{2.4} can be transformed into
\begin{equation}
\begin{array}{ll}\label{2.5}
\left\{
\begin{aligned}
\dot{X}&=Y+\sum\limits_{3\leq i+j\leq5}c_{ij}X^{i}Y^{j}+o(|X,Y|^{5}),\\[2ex]
\dot{Y}&=d_{20}X^2+d_{11}XY+\sum\limits_{3\leq i+j\leq5}d_{ij}X^{i}Y^{j}+o(|X,Y|^{5}),
\end{aligned}
\right.
\end{array}
\end{equation}
where $c_{ij}$ and $d_{ij}$ are given in Appendix D.
Through above analysis, we obtain $d_{20}=-\frac{\gamma(1-\alpha-\alpha\gamma-\gamma\eta)^2}{2(1+\alpha+\alpha\gamma-\gamma\eta)}<0$ and $d_{11}=\frac{\alpha(1+\gamma)(2+3\gamma)-(2+\gamma)(1-\gamma\eta)}{1+\alpha+\alpha\gamma-\gamma\eta}$. From \cite{RI}, we have the following result.
\begin{lemma}\label{t3}
If $\beta=\frac{(1+\alpha+\alpha\gamma-\gamma\eta)^2}{4(1+\gamma)}$, $\delta=\frac{\gamma(1-\alpha-\alpha\gamma-\gamma\eta)}{2(1+\gamma)}$ and $d_{11}\neq0$, then $E^*$  is a cusp of codimension 2 of system \eqref{2.1} when $\alpha\neq\beta$.
\end{lemma}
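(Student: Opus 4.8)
The plan is to identify $E^{*}$ as a nilpotent singular point and then read off its type and codimension from the quadratic $2$-jet already extracted in \eqref{2.5}. First I would verify that the two parameter conditions make the linear part nilpotent: when $\beta=\frac{(1+\alpha+\alpha\gamma-\gamma\eta)^{2}}{4(1+\gamma)}$, equation \eqref{2.2} has the double root $x^{*}=\frac{1-\alpha-\alpha\gamma-\gamma\eta}{2(1+\gamma)}$ (the case $\Delta_{2}=0$ of \refl{t9}), and a short computation gives $x^{*}+\alpha=\frac{1+\alpha+\alpha\gamma-\gamma\eta}{2(1+\gamma)}$, hence $\frac{\beta}{(x^{*}+\alpha)^{2}}=1+\gamma$; substituting this into the formulas for $Det\big(J(E^{*})\big)$ and $Tr\big(J(E^{*})\big)$ yields $Det\big(J(E^{*})\big)=0$ and $Tr\big(J(E^{*})\big)=\gamma x^{*}-\delta$, which vanishes precisely under the second hypothesis $\delta=\frac{\gamma(1-\alpha-\alpha\gamma-\gamma\eta)}{2(1+\gamma)}$. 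Since the off-diagonal entry $-\gamma x^{*}$ of $J(E^{*})$ is nonzero, $J(E^{*})$ is not the zero matrix, so it is linearly conjugate to a nonzero nilpotent Jordan block and $E^{*}$ is a nilpotent singularity.

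Next I would appeal to the chain of changes of coordinates already recorded before the statement: the translation of $E^{*}$ to the origin, the linear map $X=x+\frac{2(1+\gamma)}{\gamma(1-\alpha-\alpha\gamma-\gamma\eta)}y$, $Y=x$ producing \eqref{2.4}, and the near-identity substitution $x=X+\frac{b_{02}}{2}X^{2}+a_{02}XY$, $y=Y+b_{02}XY$ producing \eqref{2.5}, whose $2$-jet is exactly $\dot X=Y$, $\dot Y=d_{20}X^{2}+d_{11}XY$. It then remains to check the two nondegeneracy conditions. For $d_{20}=-\frac{\gamma(1-\alpha-\alpha\gamma-\gamma\eta)^{2}}{2(1+\alpha+\alpha\gamma-\gamma\eta)}$: the admissibility region $\Pi$ in \eqref{2.3} forces $0<\gamma<\frac{1-\alpha}{\alpha+\eta}$, i.e. $\gamma(\alpha+\eta)<1-\alpha$, so both $1-\alpha-\alpha\gamma-\gamma\eta>0$ and $1+\alpha+\alpha\gamma-\gamma\eta>0$, whence $d_{20}<0\neq0$. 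The second condition $d_{11}\neq0$ is the standing hypothesis of the lemma.

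Finally I would invoke the classical normal-form classification of nilpotent singular points (the result quoted from \cite{RI}; see also Bogdanov's theorem and the nilpotent case in Chapter~2 of \cite{ZT}): for a planar system of the form $\dot X=Y+\mathcal{O}(|X,Y|^{2})$, $\dot Y=d_{20}X^{2}+d_{11}XY+\mathcal{O}(|X,Y|^{3})$ with $d_{20}\neq0$, the origin is a cusp, and it has codimension exactly $2$ if and only if $d_{11}\neq0$. Applying this with the values just checked yields that $E^{*}$ is a cusp of codimension $2$. If one wants the canonical picture explicitly, a rescaling $(X,Y,t)\mapsto(\mu^{2}X,\mu^{3}Y,\mu^{-1}t)$ for a suitable $\mu>0$ together with one further near-identity change reduces \eqref{2.5} to $\dot u=v$, $\dot v=u^{2}\pm uv+\mathcal{O}(|u,v|^{3})$, the standard Bogdanov--Takens form.

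The substantive algebra — computing $a_{ij},b_{ij},c_{ij},d_{ij}$ and in particular the closed forms of $d_{20}$ and $d_{11}$ — has already been carried out above, so what remains is essentially bookkeeping: checking $Det\big(J(E^{*})\big)=Tr\big(J(E^{*})\big)=0$ at $E^{*}$, checking $d_{20}\neq0$ from $\Pi$, and matching \eqref{2.5} to the precise hypotheses of the cited theorem. The only point requiring care is this last matching, namely confirming that $d_{20}\neq0$ already forces the cusp structure and that $d_{11}\neq0$ is exactly what pins the codimension at $2$ rather than at some higher value — the degenerate cases $d_{11}=0$ being precisely those pursued in Section~4.
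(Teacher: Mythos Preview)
Your proposal is correct and follows essentially the same approach as the paper: the paper derives system \eqref{2.5} with its $2$-jet $d_{20}X^{2}+d_{11}XY$ in the text preceding the lemma, notes that $d_{20}\neq0$, and then simply cites \cite{RI} to conclude. Your write-up is somewhat more explicit in verifying the nilpotency of $J(E^{*})$ and in deducing $d_{20}<0$ from the region $\Pi$, but the logical skeleton is identical.
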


When  $\alpha=\beta=\frac{1+\gamma\eta-2\sqrt{\gamma\eta}}{1+\eta}$ and $\delta=\frac{\gamma\sqrt{\gamma\eta}-\eta\gamma^2}{1+\gamma}$, system \eqref{2.1} admits a positive equilibrium $E^{**}$.
Similar to Lemma \ref{t3},  we have the following result.
\begin{lemma}
If $0<\gamma<\frac{1}{\eta}$ and $d_{11}^*=\gamma-2\sqrt{\gamma\eta}-2\gamma\sqrt{\gamma\eta}\neq0$, i.e $\eta\neq\frac{\gamma}{4(1+\gamma)^2}$, then $E^{**}$  is a cusp of codimension 2 of system \eqref{2.1}.
\end{lemma}

If $d_{11}=0$ or $d_{11}^*=0$,  we need to further discuss $E^*$ and $E^{**}$, respectively. Making
\begin{equation*}
\begin{array}{ll}
\begin{aligned}
X&=u+\frac{2c_{21}+d_{12}}{6}u^3+\frac{c_{12}+d_{03}}{2}u^2v+c_{03}uv^2,\\[2ex]
Y&=v-c_{30}u^3+\frac{d_{12}}{2}u^2v+d_{03}uv^2,
\end{aligned}
\end{array}
\end{equation*}
system \eqref{2.5} becomes
\begin{equation}
\begin{array}{ll}\label{2.6}
\left\{
\begin{aligned}
\dot{u}&=v+\sum\limits_{4\leq i+j\leq5}e_{ij}u^{i}v^{j}+o(|u,v|^{5}),\\[2ex]
\dot{v}&=h_{20}u^2+h_{11}uv+h_{30}u^3+h_{31}u^3v+\sum\limits_{4\leq i+j\leq5}h_{ij}u^{i}v^{j}+o(|u,v|^{5}),
\end{aligned}
\right.
\end{array}
\end{equation}
where $e_{ij}$ and $h_{ij}$ are given in Appendix E.

Next, letting
\begin{equation*}
\begin{array}{ll}
\begin{aligned}
u&=x_1+\frac{3e_{31}+h_{22}}{12}x_1^4+\frac{2e_{22}+h_{13}}{6}x_1^3x_2+\frac{e_{13}+h_{04}}{2}x_1^2x_2^2+e_{04}x_1x_2^3,\\[2ex]
v&=x_2-e_{40}x_1^4+\frac{h_{22}}{3}x_1^3x_2+\frac{h_{13}}{2}x_1^2x_2^2+h_{04}x_1x_2^3,
\end{aligned}
\end{array}
\end{equation*}
system \eqref{2.6} can be written as
\begin{equation}
\begin{array}{ll}\label{2.7}
\left\{
\begin{aligned}
\dot{x_1}&=x_2+\sum\limits_{i+j=5}k_{ij}x_1^{i}x_2^{j}+o(|x_1,x_2|^{5}),\\[2ex]
\dot{x_2}&=l_{20}x_1^2+l_{11}x_1x_2+l_{30}x_1^3+l_{31}x_1^3x_2+l_{40}x_1^4\\[2ex]
&\quad+l_{41}x_1^4x_2+\sum\limits_{i+j=5}l_{ij}x_1^{i}x_2^{j}+o(|x_1,x_2|^{5}),
\end{aligned}
\right.
\end{array}
\end{equation}
where
\begin{equation*}
\begin{array}{ll}
\begin{aligned}
k_{50}&=e_{50}-\frac{1}{6}h_{20}(2e_{22}+h_{13}),\quad k_{32}=e_{32}-3e_{04}h_{20}-h_{11}(e_{13}+h_{04}),\\[2ex]
k_{23}&=e_{23}-3e_{04}h_{11},\quad k_{41}=e_{41}-\frac{1}{6}h_{11}(2e_{22}+h_{13})-h_{20}(e_{13}+h_{04}),\\[2ex]
k_{14}&=e{14},\quad k_{05}=e_{05},\quad l_{20}=h_{20},\quad l_{11}=h_{11},\quad l_{30}=h_{30},\quad l_{21}=h_{21},\\[2ex]
l_{40}&=h_{40},\quad l_{31}=h_{31}+4e_{40},\quad l_{50}=\frac{1}{6}h_{20}(3e_{31}-h_{22})-e_{40}h_{11}+h_{50},\\[2ex]
l_{41}&=\frac{1}{12}\big(h_{11}(3e_{31}+h_{22})+8h_{20}(e_{22}-h_{13})\big)+h_{41},\quad l_{14}=h_{14}+e_{04}h_{11},\\[2ex]
l_{32}&=\frac{1}{3}h_{11}(e_{22}-h_{13})+h_{20}(e_{13}-2h_{04})+h_{32},\quad l_{05}=h_{05},\\[2ex]
l_{23}&=\frac{1}{2}h_{11}(e_{13}+h_{04})-2h_{04}h_{11}+2e_{04}h_{20}+h_{23}.
\end{aligned}
\end{array}
\end{equation*}
Finally, setting
\begin{equation*}
\begin{array}{ll}
\begin{aligned}
x_1&=y_1+\frac{4k_{41}+l_{32}}{20}y_1^5+\frac{3k_{32}+l_{23}}{12}y_1^4y_2+\frac{2k_{23}+l_{14}}{6}y_1^3y_2^2\\[2ex]
&\quad+\frac{k_{14}+l_{05}}{2}y_1^2y_2^3+k_{05}y_2^5,\\[2ex]
x_2&=y_2-k_{50}y_1^5+\frac{l_{32}}{4}y_1^4y_2+\frac{l_{23}}{3}y_1^3y_2^2+\frac{l_{14}}{2}y_1^2y_2^3+l_{05}y_1y_2^4,
\end{aligned}
\end{array}
\end{equation*}
system \eqref{2.7} can be transformed as
\begin{equation}
\begin{array}{ll}\label{2.8}
\left\{
\begin{aligned}
\dot{y_1}&=y_2+o(|y_1,y_2|^{5}),\\[2ex]
\dot{y_2}&=m_{20}y_1^2+m_{11}y_1y_2+m_{30}y_1^3+m_{21}y_1^2y_2+m_{40}y_1^4\\[2ex]
&\quad+m_{31}y_1^3y_2+m_{50}y_1^5+m_{41}y_1^4y_2+o(|y_1,y_2|^{5}),
\end{aligned}
\right.
\end{array}
\end{equation}
where
\begin{equation*}
\begin{array}{ll}
\begin{aligned}
m_{20}&=l_{11},\quad m_{11}=l_{11},\quad m_{30}=l_{30},\quad m_{21}=l_{21},\quad m_{40}=l_{40},\\[2ex]
m_{31}&=l_{31},\quad m_{50}=l_{50},\quad m_{41}=5k_{50}+l_{41}.
\end{aligned}
\end{array}
\end{equation*}

From above analysis, system \eqref{2.5} is locally topologically equivalent to \eqref{2.8}. Since $d_{11}=0$ and $d_{20}\neq0$ in \eqref{2.5}, the following results hold by \cite{SNCD}.
\begin{lemma}\label{t4}
When $\alpha=\alpha_0=\frac{(2+\gamma)(1-\gamma\eta)}{(1+\gamma)(2+3\gamma)}$, $\beta=\beta_0=\frac{4(1+\gamma)(1-\gamma\eta)^2}{(2+3\gamma)^2}$ and $\delta=\delta_0=\frac{\gamma^2(1-\gamma\eta)}{3\gamma^2+5\gamma+2}$, then system \eqref{2.1} is locally topologically equivalent
\begin{equation}
\begin{array}{ll}\label{2.9}
\left\{
\begin{aligned}
\dot{x}&=y,\\[2ex]
\dot{y}&=x^2+Mx^3y+Nx^4y+o(|x,y|^5),
\end{aligned}
\right.
\end{array}
\end{equation}
where $M$ and $N$ can be expressed by $c_{ij}$ and $d_{ij}$. Moreover, $E^*$  is a cusp of codimension 3 of system \eqref{2.1} if $0<\gamma<\frac{1}{\eta}$ and $\eta\neq\frac{\gamma^2+8\gamma+8}{4\gamma^3+19\gamma^2+20\gamma+4}$, i.e., $M\neq0$, a cusp of  codimension 4 if $\eta=\eta_0=\frac{\gamma^2+8\gamma+8}{4\gamma^3+19\gamma^2+20\gamma+4}$, i.e., $M=0$ and $N\neq0$.
\end{lemma}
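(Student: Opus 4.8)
The plan is to push the chain of reductions \eqref{2.4}--\eqref{2.8} one further step into the Bogdanov--Takens normal form \eqref{2.9} and then read off the topological type from the classification of degenerate cusps in \cite{SNCD}; since Lemma~\ref{t3} has already handled the case $d_{11}\neq0$, everything here hinges on the degenerate case $d_{11}=0$. First I would check that the parameter values $\alpha=\alpha_0$, $\beta=\beta_0$, $\delta=\delta_0$ are admissible: that they are positive and determine a double positive equilibrium of \eqref{2.1} (i.e.\ the point lies in $\Pi$ of \eqref{2.3}, which in particular forces $0<\alpha_0<1$ together with an upper bound on $\gamma$), and that none of the quantities $\varsigma_1,\dots,\varsigma_7$, $b_{02}$, $d_{20}$, $\dots$ occurring as pivots or denominators in the substitutions leading to \eqref{2.8} vanishes there. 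The two structural facts to record are: $d_{11}=0$ holds \emph{exactly} when $\alpha=\alpha_0$, so that $m_{11}=l_{11}=0$ in \eqref{2.8}; and $d_{20}=-\frac{\gamma(1-\alpha_0-\alpha_0\gamma-\gamma\eta)^2}{2(1+\alpha_0+\alpha_0\gamma-\gamma\eta)}\neq0$ automatically on $\Pi$. Thus \eqref{2.8} represents a nilpotent cusp whose generic ($xy$) Bogdanov--Takens coefficient has vanished while the quadratic coefficient survives — precisely the regime treated in \cite{SNCD}.

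Next, starting from \eqref{2.8} with $m_{11}=0$ and $m_{20}\neq0$, I would rescale $(y_1,y_2,t)$ by suitable powers of $m_{20}$ to normalize the coefficient of $y_1^2$ to $1$ while preserving the form $\dot y_1=y_2+\mathcal{O}(|y_1,y_2|^5)$, and then apply the near-identity polynomial changes of coordinates together with the analytic time reparametrization of \cite{SNCD} that eliminate the pure powers $y_1^3,y_1^4,y_1^5$ and the remaining low-order $y_1^iy_2$ terms, arriving at the $5$-jet \eqref{2.9}. Composing all of these maps expresses $M$ and $N$ as explicit rational functions of the $c_{ij},d_{ij}$, hence — after inserting $\alpha_0,\beta_0,\delta_0$ — as rational functions of the two remaining parameters $\gamma,\eta$.

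With the system in the form \eqref{2.9}, the codimension assertion is the conclusion of the cusp classification of \cite{SNCD}: the origin is a cusp of codimension $3$ when $M\neq0$, and a cusp of codimension $4$ when $M=0$ and $N\neq0$. One can independently confirm the cusp topological type by a quasi-homogeneous blow-up of \eqref{2.9} with weights $(2,3)$ for $(x,y)$, which resolves the origin into (semi-)hyperbolic singularities. To see that both cases are genuinely realized, I would finish by exhibiting explicit admissible $(\gamma,\eta)$ for which $M\neq0$, and — imposing $M=0$ as one additional relation — explicit admissible $(\gamma,\eta)$ for which $M=0\neq N$.

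The main obstacle is the computation behind $M$ and $N$: composing the successive polynomial transformations and the time rescaling while retaining every monomial up to degree $5$ at each stage, and then simplifying the resulting large rational expressions in $\gamma,\eta$ to a form in which $M\neq0$ and $M=0\neq N$ can actually be decided. This bookkeeping — in practice carried out with a computer algebra system — is the real content of the lemma, the rest being a verification of hypotheses and an appeal to \cite{SNCD}.
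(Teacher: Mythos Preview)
Your proposal is correct and follows essentially the same route as the paper: start from \eqref{2.8} with $m_{11}=0$ and $m_{20}=d_{20}\neq0$, apply near-identity polynomial substitutions together with a time reparametrization to kill the $x^2y$ term and the pure powers $x^3,x^4,x^5$, then rescale to normalize the $x^2$ coefficient to $+1$, and finally invoke the cusp classification of \cite{SNCD}. The only minor difference is one of ordering and completeness: the paper performs the normalization of $m_{20}$ last (after the cleanup steps) rather than first, and it carries the computation through to closed-form expressions for $M$ and $N$ in $(\gamma,\eta)$, pinning down the exact locus $\eta=\eta_0=\frac{\gamma^2+8\gamma+8}{4\gamma^3+19\gamma^2+20\gamma+4}$ on which $M=0$ and checking $N\neq0$ there, whereas you plan only to exhibit sample admissible $(\gamma,\eta)$ in each regime. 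Either suffices for the lemma as stated.
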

The detailed proof of Lemma \ref{t4} is given in Appendix B.
When $\alpha=\beta$, $d_{11}^*=0$ and $d_{20}^*=-\frac{\gamma^3(2+\gamma)}{8(1+\gamma)^3}$, we have the following result.
\begin{lemma}\label{t11}
If $\alpha=\beta$, $\beta=\frac{(2+\gamma)^2}{4(1+\gamma)^3}$, $\delta=\frac{\gamma^2(2+\gamma)}{4(1+\gamma)^3}$ and $\eta=\frac{\gamma}{4(1+\gamma)^2}$, then system \eqref{2.1} is locally topologically equivalent
\begin{equation}
\begin{array}{ll}\label{4.2}
\left\{
\begin{aligned}
\dot{x}&=y,\\[2ex]
\dot{y}&=x^2+\bar{M}x^3y+\bar{N}x^4y+o(|x,y|^5),
\end{aligned}
\right.
\end{array}
\end{equation}
where
\begin{equation*}
\begin{array}{ll}
\begin{aligned}
\bar{M}=\frac{4\sqrt{2}(1+\gamma)^{11/2}(8+7\gamma)\sqrt{\gamma(2+\gamma)}}{\gamma^3(2+\gamma)^2(3+2\gamma)}>0.
\end{aligned}
\end{array}
\end{equation*}
Thus, $E^{**}$ is a cusp of codimension 3 of system \eqref{2.1}.
\end{lemma}
The proof is similarly to that of Lemma \ref{t4}, and is omitted here.

\section{Bifurcation analysis}

In this section, we focus on bifurcations especially nilpotent bifurcations of system \eqref{2.1}.

\subsection{Saddle-node bifurcation}

 From Lemma \ref{l2},
\begin{equation*}
\begin{array}{ll}
\begin{aligned}
SN_1=\{(\gamma,\alpha,\beta,\delta,\eta):\beta=\frac{1}{4}(1+\alpha)^2,0<\alpha<1,\gamma>0,\delta>0,\eta>0\}
\end{aligned}
\end{array}
\end{equation*}
is a saddle-node bifurcation surface. When the parameters vary from one side of the surface $SN_1$ to the other side, the number of boundary equilibria of system \eqref{2.1} changes from zero to two, and the two equilibria are a hyperbolic saddle and a node. $SN_1$ is the first saddle-node bifurcation surface of system \eqref{2.1}. The biological interpretation for the first saddle-node bifurcation is that $\beta=\frac{1}{4}(1+\alpha)^2$, the prey species is driven to extinction, and the system collapses for $\beta>\frac{1}{4}(1+\alpha)^2$, but the
prey species does not go to extinction for some initial values when $0<\beta<\frac{1}{4}(1+\alpha)^2$. Besides, from Lemma \ref{t9}, we know that the surface
\begin{equation*}
\begin{array}{ll}
\begin{aligned}
SN_2&=\bigg\{(\gamma,\alpha,\beta,\delta,\eta):\beta:=\beta^*=\frac{(1+\alpha+\alpha\gamma-\gamma\eta)^2}{4(1+\gamma)},\delta\neq\frac{\gamma(1-\alpha-\alpha\gamma-\gamma\eta)}{2(1+\gamma)},0<\alpha<1,\\[2ex]
&\qquad0<\gamma<\frac{1-\alpha}{\alpha+\eta}\bigg\}
\end{aligned}
\end{array}
\end{equation*}
is the saddle-node bifurcation surface too. Two positive equilibria arise from the saddle-node bifurcation, which implies that there exists a critical Allee effect constant $\beta^*$ such that the predator species goes extinct when the Allee effect constant $\beta$ is greater than $\beta^*$, and coexistence for system \eqref{2.1} is certain in the form of a positive equilibrium for certain choices of initial values when $(\gamma,\alpha,\beta,\delta,\eta)\in SN_2$.

\begin{lemma}\label{l16}
System \eqref{2.1} undergoes saddle-node bifurcation as $(\gamma,\alpha,\beta,\delta,\eta)$ varies near
 $SN_1$ or $SN_2$.
\end{lemma}

\subsection{Bogdanov-Takens bifurcation}

From Lemma \ref{t4}, it follows  that system \eqref{2.1} may exhibit cusp type Bogdanov-Takens bifurcation of codimension 4 near $E^*$. In order to make sure  such degenerate Bogdanov-Takens bifurcation can be fully unfolding inside the class of system \eqref{2.1}, we choose $\alpha$, $\beta$, $\delta$ and $\eta$ as bifurcation parameters, and consider the following system
\begin{equation}
\begin{array}{ll}\label{2.14}
\left\{
\begin{aligned}
\dot{x}&=x(1-x)-\gamma xy-\frac{(\beta_0+\lambda_1) x}{x+\alpha_0+\lambda_2},\\[2ex]
\dot{y}&=y(\delta_0+\lambda_3)(1-\frac{y}{x+\eta_0+\lambda_4}),
\end{aligned}
\right.
\end{array}
\end{equation}
where $\lambda=(\lambda_1,\lambda_2,\lambda_3,\lambda_4)=(0,0,0,0)$. If we can transform system \eqref{2.14} as follows
\begin{equation}
\begin{array}{ll}\label{2.15}
\left\{
\begin{aligned}
\dot{x}&=y,\\[2ex]
\dot{y}&=\sigma_1+\sigma_2y+\sigma_3xy+\sigma_4x^3y+x^2-x^4y+R(x,y,\lambda),
\end{aligned}
\right.
\end{array}
\end{equation}
where
\begin{equation}
\begin{array}{ll}\label{2.16}
\begin{aligned}
R(x,y,\lambda)&=y^2O(|x,y|^2)+O(|x,y|^6)+O(\lambda)\big(O(y^2)+O(|x,y|^3)\big)\\[2ex]
&\quad+O(\lambda^2)O(|x,y|),
\end{aligned}
\end{array}
\end{equation}
and validate $\frac{\partial(\sigma_1,\sigma_2,\sigma_3,\sigma_4)}{\partial(\lambda_1,\lambda_2,\lambda_3,\lambda_4)}\big|_{\lambda=0}\neq0$, then we conclude that system \eqref{2.14} undergoes cusp type Bogdanov-Takens bifurcation of codimension 4. Based on this, we establish the following findings:
\begin{theorem}\label{t5}
System \eqref{2.1} can undergo the cusp bifurcation of codimension 4 near $E^*$ as $(\alpha,\gamma,\delta,\eta)$ varies near $(\alpha_0,\gamma_0,\delta_0,\eta_0)$. There are a series of bifurcation with codimension 2, 3 and 4 originating from $E^*$:
\begin{enumerate}
\item[\bf(i)] If $\beta=\frac{(1+\alpha+\alpha\gamma-\gamma\eta)^2}{4(1+\gamma)}$, $\delta=\frac{\gamma(1-\alpha-\alpha\gamma-\gamma\eta)}{2(1+\gamma)}$, then system \eqref{2.1} can undergo the cusp  bifurcation of codimension 2 near $E^*$ for $\alpha<1$ and $\gamma<\frac{1-\alpha}{\alpha+\eta}$.

\item[\bf(ii)] If $\beta=\frac{4(1+\gamma)(1-\gamma\eta)^2}{(2+3\gamma)^2}$, $\delta=\frac{\gamma^2(1-\gamma\eta)}{(1+\gamma)(2+3\gamma)}$ and $\alpha=\frac{(2+\gamma)(1-\gamma\eta)}{3\gamma^2+5\gamma+2}$, then system \eqref{2.1} can undergo the cusp  bifurcation of codimension 3 near $E^*$ for $\eta<\frac{1}{\gamma}$.

\item[\bf(iii)] If $\beta=\frac{4(1+\gamma)^3(2+\gamma)^2}{(4\gamma^3+19\gamma^2+20\gamma+4)^2}$, $\delta=\frac{\gamma^2(2+\gamma)}{4\gamma^3+19\gamma^2+20\gamma+4}$, $\alpha=\frac{(2+\gamma)^2}{4\gamma^3+19\gamma^2+20\gamma+4}$ and $\eta=\frac{\gamma^2+8\gamma+\beta}{4\gamma^3+19\gamma^2+20\gamma+4}$, then system \eqref{2.1} can undergo the cusp  bifurcation of codimension 4 near $E^*$.
\end{enumerate}
\end{theorem}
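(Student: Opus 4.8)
The plan is to exhibit the four-parameter family \eqref{2.14} as a versal unfolding of the codimension-$4$ nilpotent cusp found at $E^*$ in \refl{t4}, and then to read off (i)--(iii) as the bifurcation strata of that unfolding. I would begin from system \eqref{2.14}: at $\lambda=0$, with $(\alpha,\beta,\delta,\eta)=(\alpha_0,\beta_0,\delta_0,\eta_0)$, the point $E^*$ is exactly the codimension-$4$ cusp of \refl{t4}. For small $\lambda$ there is still a unique equilibrium close to $E^*$; applying to \eqref{2.14} the $\lambda$-dependent translation that carries this equilibrium to the origin at $\lambda=0$, followed by the same linear change of variables used to obtain \eqref{2.4}, produces a family $\dot x = y + \sum_{2\le i+j\le 5}a_{ij}(\lambda)x^iy^j+\mathcal O(|x,y|^5)$, $\dot y = \sum_{2\le i+j\le 5}b_{ij}(\lambda)x^iy^j+\mathcal O(|x,y|^5)$ whose coefficients are smooth in $\lambda$ and reduce at $\lambda=0$ to those of \eqref{2.4}; for $\lambda\neq0$ a constant term and a linear $y$-term reappear, and these will generate the unfolding parameters $\sigma_1,\sigma_2$.

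Next I would carry $\lambda$ through the whole chain of near-identity transformations and time rescalings used in the proof of \refl{t4} --- the substitutions leading to \eqref{2.5}, then to \eqref{2.6}, \eqref{2.7}, \eqref{2.8}, and the four normalization steps giving \eqref{2.10}--\eqref{2.13}. Each of these is polynomial with coefficients smooth in $\lambda$, so their composition transforms the family of the previous paragraph into the form \eqref{2.15}, with remainder $R$ of the structure prescribed in \eqref{2.16}. Normalizing the $x^2$-coefficient to $+1$ and the $x^4y$-coefficient to $-1$ is legitimate because at $\lambda=0$ one has $w_{20}<0$ and $N\neq0$ (\refl{t4}). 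The resulting unfolding parameters $\sigma_i=\sigma_i(\lambda)$ satisfy $\sigma_i(0)=0$, with $\sigma_1$ the constant term, $\sigma_2$ the coefficient of $y$, $\sigma_3$ the coefficient of $xy$, and $\sigma_4$ the coefficient of $x^3y$ in the reduced system.

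By the theory of the degenerate Bogdanov--Takens bifurcation of codimension $4$ (\cite{SNCD}), system \eqref{2.14} is a versal unfolding of this cusp provided the transversality condition
\begin{equation*}
\det\frac{\partial(\sigma_1,\sigma_2,\sigma_3,\sigma_4)}{\partial(\lambda_1,\lambda_2,\lambda_3,\lambda_4)}\bigg|_{\lambda=0}\neq 0
\end{equation*}
holds, and verifying this is the real work of the proof. The plan is to linearize in $\lambda$ each map occurring above (the translation, the linearizing transformation, and all the normal-form substitutions), compose the linearizations to write the Jacobian above as an explicit expression in $\gamma$ and the remaining critical data, and then evaluate its determinant at the rational critical values of item (iii), for which $\alpha_0,\beta_0,\delta_0,\eta_0$ are all fixed by $\gamma$. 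I expect this determinant to factor through quantities already known to be nonzero at the critical point --- such as $\gamma$, $1+\gamma$, $2+3\gamma$, $1-\gamma\eta$, $(2+\gamma)(1+2\gamma)\eta+\gamma$ and $\varrho_2$ --- times a further factor that can be checked nonzero by direct substitution, so that transversality holds on an open set of admissible $\gamma$ containing the value used in the simulations. The main obstacle is precisely the bookkeeping here: keeping only first-order terms in $\lambda$ through a long composition while the base-point coefficients are themselves lengthy rational functions of $\gamma$ and $\eta$.

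Finally, items (i)--(iii) follow as the bifurcation strata. Item (iii) is exactly the versality just established. For (i): at the values $\beta=\frac{(1+\alpha+\alpha\gamma-\gamma\eta)^2}{4(1+\gamma)}$, $\delta=\frac{\gamma(1-\alpha-\alpha\gamma-\gamma\eta)}{2(1+\gamma)}$ with $\alpha<1$, $\gamma<\frac{1-\alpha}{\alpha+\eta}$ and $d_{11}\neq0$, the point $E^*$ is a cusp of codimension $2$ by \refl{t3}; unfolding it with the two parameters $\lambda_1,\lambda_3$ (perturbations of $\beta$ and $\delta$) and checking the $2\times2$ minor $\det\frac{\partial(\sigma_1,\sigma_2)}{\partial(\lambda_1,\lambda_3)}\neq0$ --- a sub-computation of the one above --- gives the codimension-$2$ Bogdanov--Takens bifurcation. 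For (ii): at $\alpha=\alpha_0$, $\beta=\beta_0$, $\delta=\delta_0$ with $\eta<\frac{1}{\gamma}$ and $M\neq0$, the point $E^*$ is a cusp of codimension $3$ by \refl{t4}; unfolding with $\lambda_1,\lambda_2,\lambda_3$ and checking the corresponding $3\times3$ Jacobian is nonzero yields the codimension-$3$ bifurcation. Thus the announced series of bifurcations of codimension $2$, $3$ and $4$ all originate from $E^*$, completing the proof.
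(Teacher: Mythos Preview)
Your plan is essentially the same strategy as the paper's: bring the four-parameter unfolding \eqref{2.14} to the standard codimension-4 normal form \eqref{2.15} and then verify the transversality Jacobian. Two technical points are worth noting, however.

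First, the claim that ``for small $\lambda$ there is still a unique equilibrium close to $E^*$'' is false: at the critical parameters one has $\Delta_2=0$, so $E^*$ is a double root of \eqref{2.2} and generically splits into two equilibria or disappears under perturbation. The paper sidesteps this by translating to the \emph{fixed} unperturbed point $E^*$ (the translation in the first step of its proof uses the $\lambda$-independent coordinates $x^*=\frac{\gamma(2+\gamma)}{4\gamma^3+19\gamma^2+20\gamma+4}$, $y^*=\frac{2(1+\gamma)(4+\gamma)}{4\gamma^3+19\gamma^2+20\gamma+4}$); the constant term $\bar a_{00},\bar b_{00}$ then records the displacement of the true equilibria and feeds into $\sigma_1$. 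Your $\lambda$-dependent translation to a ``unique nearby equilibrium'' is not well defined.

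Second, the paper does not literally carry $\lambda$ through the chain \eqref{2.5}--\eqref{2.13} from \refl{t4}; instead, after the fixed translation it immediately sets $u=X$, $v=\dot X$ to obtain \eqref{2.18}, then runs a fresh sequence of near-identity and time changes tailored to the form $\dot x=y$, $\dot y=\text{polynomial}$ with constant and linear terms present. Reusing the \refl{t4} transformations verbatim is awkward because they were designed for a system with no constant or linear part, so extra cross-terms would need to be tracked at every stage; the $v=\dot X$ device avoids this bookkeeping. Either route leads to \eqref{2.15} and the same Jacobian check, but the paper's is cleaner.

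Your handling of items (i)--(ii) via the $2\times2$ and $3\times3$ sub-Jacobians is in fact more explicit than what the paper writes: its proof carries out only the full codimension-4 calculation and lets (i)--(ii) follow implicitly as lower strata of the versal unfolding.
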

\begin{proof}
Firstly, translating $E^*$ to $(0,0)$ by $x=X+\frac{\gamma(2+\gamma)}{4\gamma^3+19\gamma^2+20\gamma+4}$ and $y=Y+\frac{2(1+\gamma)(4+\gamma)}{4\gamma^3+19\gamma^2+20\gamma+4}$ and expanding system \eqref{2.14} in power series near $(0,0)$, then system \eqref{2.14} can be changed into
\begin{equation}
\begin{array}{ll}\label{2.17}
\left\{
\begin{aligned}
\dot{X}&=\bar{a}_{00}+\bar{a}_{10}X+\bar{a}_{01}Y+\bar{a}_{11}XY+\bar{a}_{20}X^2+\bar{a}_{30}X^3\\[2ex]
&\quad+\bar{a}_{40}X^4+\bar{a}_{50}X^5+o(|X,Y|^5),\\[2ex]
\dot{Y}&=\bar{b}_{00}+\bar{b}_{10}X+\bar{b}_{01}Y+\bar{b}_{20}X^2+\bar{b}_{11}XY+\bar{b}_{02}Y^2+\bar{b}_{30}X^3\\[2ex]
&\quad+\bar{b}_{21}X^2Y+\bar{b}_{12}XY^2+\bar{b}_{40}X^4+\bar{b}_{31}X^3Y+\bar{b}_{22}X^2Y^2\\[2ex]
&\quad+\bar{b}_{50}X^5+\bar{b}_{32}X^3Y^2+\bar{b}_{41}X^4Y+o(|X,Y|^5),
\end{aligned}
\right.
\end{array}
\end{equation}
where $\bar{a}_{ij}$ and $\bar{b}_{ij}$ are given in Appendix F.

Secondly, setting $u=X$ and $v=\dot{X}$, system \eqref{2.17} can be written as
\begin{equation}
\begin{array}{ll}\label{2.18}
\left\{
\begin{aligned}
\dot{u}&=v,\\[2ex]
\dot{v}&=\bar{c}_{00}+\bar{c}_{10}u+\bar{c}_{01}v+\bar{c}_{20}u^2+\bar{c}_{11}uv+\bar{c}_{02}v^2+c_{30}u^3\\[2ex]
&\quad+\bar{c}_{21}u^2v+\bar{c}_{12}uv^2+\bar{c}_{40}u^4+\bar{c}_{31}u^3v+\bar{c}_{22}u^2v^2\\[2ex]
&\quad+\bar{c}_{50}u^5+\bar{c}_{32}u^3v^2+\bar{c}_{41}u^4v+o(|u,v|^5),
\end{aligned}
\right.
\end{array}
\end{equation}
where $\bar{c}_{ij}$ are given in Appendix G.

Thirdly, letting $u=x+\frac{\bar{c}_{02}}{2}x^2$ and $v=y+\bar{c}_{02}xy$, system \eqref{2.18} can be transformed into
\begin{equation}
\begin{array}{ll}\label{2.19}
\left\{
\begin{aligned}
\dot{x}&=y,\\[2ex]
\dot{y}&=\bar{d}_{00}+\bar{d}_{10}x+\bar{d}_{01}y+\bar{d}_{20}x^2+\bar{d}_{11}xy+\bar{d}_{30}x^3+\bar{d}_{12}xy^2+\bar{d}_{40}x^4+\bar{d}_{50}x^5\\[2ex]
&\quad+\bar{d}_{21}x^2y+\bar{d}_{31}x^3y+\bar{d}_{22}x^2y^2+\bar{d}_{32}x^3y^2+\bar{d}_{41}x^4y+o(|x,y|^5),
\end{aligned}
\right.
\end{array}
\end{equation}
where
\begin{equation*}
\begin{array}{ll}
\begin{aligned}
\bar{d}_{00}&=\bar{c}_{00},\quad \bar{d}_{10}=\bar{c}_{10}-\bar{c}_{00}\bar{c}_{02},\quad \bar{d}_{01}=\bar{c}_{01},\quad \bar{d}_{20}=\bar{c}_{20}+\bar{c}_{00}\bar{c}_{02}^2-\frac{\bar{c}_{02}\bar{c}_{10}}{2},\\[2ex]
\bar{d}_{30}&=\bar{c}_{30}-\bar{c}_{00}\bar{c}_{02}^3+\frac{\bar{c}_{02}^2\bar{c}_{10}}{2},\quad \bar{d}_{21}=\bar{c}_{21}+\frac{\bar{c}_{02}\bar{c}_{11}}{2},\quad \bar{d}_{12}=\bar{c}_{12}+2\bar{c}_{02}^2,\quad \bar{d}_{11}=\bar{c}_{11},\\[2ex]
\bar{d}_{40}&=\bar{c}_{40}+\bar{c}_{00}\bar{c}_{02}^4-\frac{\bar{c}_{02}(\bar{c}_{02}^2\bar{c}_{10-\bar{c}_{30}})}{2}+\frac{\bar{c}_{02}^2\bar{c}_{20}}{4},\quad \bar{d}_{22}=\bar{c}_{22}-\bar{c}_{02}^3+\frac{3\bar{c}_{02}\bar{c}_{12}}{2},\\[2ex]
\bar{d}_{50}&=\bar{c}_{50}-\bar{c}_{02}(\bar{c}_{00}\bar{c}_{02}^4-\bar{c}_{40})+\frac{1}{4}\bar{c}_{02}^2(2\bar{c}_{02}^2\bar{c}_{10}-\bar{c}_{02}\bar{c}_{20}+\bar{c}_{30}),\quad \bar{d}_{31}=\bar{c}_{31}+\bar{c}_{02}\bar{c}_{21},\\[2ex] \bar{d}_{41}&=\bar{c}_{41}+\frac{1}{4}\bar{c}_{02}(\bar{c}_{02}\bar{c}_{21}+6\bar{c}_{31}),\quad \bar{d}_{32}=\bar{c}_{32}+2\bar{c}_{02}\bar{c}_{22}+\bar{c}_{02}^4+\frac{\bar{c}_{02}^2\bar{c}_{12}}{2}.
\end{aligned}
\end{array}
\end{equation*}

Fourthly, making $x=x_1+\frac{\bar{d}_{12}}{6}x_1^3$ and $y=y_1+\frac{\bar{d}_{12}}{2}x_1^2y_1$, system \eqref{2.19} can be written as
\begin{equation}
\begin{array}{ll}\label{2.20}
\left\{
\begin{aligned}
\dot{x_1}&=y_1,\\[2ex]
\dot{y_1}&=\bar{e}_{00}+\bar{e}_{10}x_1+\bar{e}_{01}y_1+\bar{e}_{20}x_1^2+\bar{e}_{11}x_1y_1+\bar{e}_{30}x_1^3+\bar{e}_{21}x_1^2y_1+\bar{e}_{31}x_1^3y_1\\[2ex]
&\quad+\bar{e}_{40}x_1^4+\bar{e}_{22}x_1^2y_1^2+\bar{e}_{50}x_1^5+\bar{e}_{32}x_1^3y_1^2+\bar{e}_{41}x_1^4y_1+o(|x_1,y_1|^5),
\end{aligned}
\right.
\end{array}
\end{equation}
where
\begin{equation*}
\begin{array}{ll}
\begin{aligned}
\bar{e}_{00}&=\bar{d}_{00},\quad \bar{e}_{10}=\bar{d}_{10},\quad \bar{e}_{01}=\bar{d}_{01},\quad \bar{e}_{20}=\bar{d}_{20}-\frac{\bar{d}_{00}\bar{d}_{12}}{2},\quad \bar{e}_{11}=\bar{d}_{11},\\[2ex]
\bar{e}_{30}&=\bar{d}_{30}-\frac{\bar{d}_{10}\bar{d}_{12}}{3},\quad \bar{e}_{40}=\bar{d}_{40}-\frac{\bar{d}_{00}\bar{d}_{12}^2}{4}-\frac{\bar{d}_{12}\bar{d}_{20}}{6},\quad \bar{e}_{31}=\bar{d}_{31}+\frac{\bar{d}_{11}\bar{d}_{12}}{6},\\[2ex]
\bar{e}_{50}&=\bar{d}_{50}+\frac{\bar{d}_{10}\bar{d}_{12}^2}{5},\quad \bar{e}_{41}=\bar{d}_{41}+\frac{\bar{d}_{12}\bar{d}_{21}}{3},\quad \bar{e}_{32}=\bar{d}_{32}+\frac{7\bar{d}_{12}^2}{6},\quad \bar{e}_{21}=\bar{d}_{21}.
\end{aligned}
\end{array}
\end{equation*}

Fifthly, letting $x_1=u+\frac{\bar{e}_{22}}{12}u^4$ and $y_1=v+\frac{\bar{e}_{22}}{3}u^3v$, system\eqref{2.20} can be changed into
\begin{equation}
\begin{array}{ll}\label{2.21}
\left\{
\begin{aligned}
\dot{u}&=v,\\[2ex]
\dot{v}&=\bar{h}_{00}+\bar{h}_{10}u+\bar{d}_{01}v+\bar{h}_{20}u^2+\bar{h}_{11}uv+\bar{h}_{30}u^3+\bar{h}_{21}u^2y_1+\bar{h}_{31}u^3v\\[2ex]
&\quad+\bar{h}_{40}u^4+\bar{h}_{50}u^5+\bar{h}_{32}u^3v^2+\bar{h}_{41}u^4v+o(|u,v|^5),
\end{aligned}
\right.
\end{array}
\end{equation}
where
\begin{equation*}
\begin{array}{ll}
\begin{aligned}
\bar{h}_{00}&=\bar{e}_{00},\quad \bar{h}_{10}=\bar{e}_{10},\quad \bar{h}_{01}=\bar{e}_{01},\quad \bar{h}_{20}=\bar{e}_{20},\quad \bar{h}_{11}=\bar{e}_{11},\\[2ex]
\bar{h}_{30}&=\bar{e}_{30}-\frac{\bar{e}_{00}\bar{e}_{22}}{3},\quad \bar{h}_{21}=\bar{e}_{21},\quad \bar{h}_{40}=\bar{e}_{40}-\frac{\bar{e}_{10}\bar{e}_{22}}{4},\quad \bar{h}_{31}=\bar{e}_{31},\\[2ex]
\bar{h}_{50}&=\bar{e}_{50}-\frac{\bar{e}_{20}\bar{e}_{22}}{5},\quad \bar{h}_{41}=\bar{e}_{41}+\frac{\bar{e}_{11}\bar{e}_{22}}{12},\quad \bar{h}_{32}=\bar{e}_{32}.
\end{aligned}
\end{array}
\end{equation*}

Sixthly, setting $u=x+\frac{\bar{h}_{32}}{20}x^5$ and $v=y+\frac{\bar{h}_{32}}{4}x^4y$, then system \eqref{2.21} can be transformed into
\begin{equation}
\begin{array}{ll}\label{2.22}
\left\{
\begin{aligned}
\dot{x}&=y,\\[2ex]
\dot{y}&=\bar{k}_{00}+\bar{k}_{10}x+\bar{k}_{01}y+\bar{k}_{20}x^2+\bar{k}_{11}xy+\bar{k}_{30}x^3+\bar{k}_{21}x^2y+\bar{k}_{31}x^3y\\[2ex]
&\quad+\bar{k}_{40}x^4+\bar{k}_{50}x^5+\bar{k}_{41}x^4y+o(|x,y|^5),
\end{aligned}
\right.
\end{array}
\end{equation}
where
\begin{equation*}
\begin{array}{ll}
\begin{aligned}
\bar{k}_{00}&=\bar{h}_{00},\quad \bar{k}_{10}=\bar{h}_{10},\quad \bar{k}_{01}=\bar{h}_{01},\quad \bar{k}_{20}=\bar{h}_{20},\quad \bar{k}_{11}=\bar{h}_{11},\quad \bar{k}_{30}=\bar{h}_{30},\\[2ex]
\bar{k}_{21}&=\bar{h}_{21},\quad \bar{k}_{40}=\bar{h}_{40}-\frac{\bar{h}_{00}\bar{h}_{32}}{4},\quad \bar{k}_{31}=\bar{h}_{31},\quad \bar{k}_{50}=\bar{h}_{50}-\frac{\bar{h}_{10}\bar{h}_{32}}{5},\quad \bar{k}_{41}=\bar{h}_{41}.
\end{aligned}
\end{array}
\end{equation*}

Seventhly, making
\begin{equation*}
\begin{array}{ll}
\begin{aligned}
x&=X-\frac{\bar{k}_{30}}{4\bar{k}_{20}}X^2+\frac{15\bar{k}_{30}^2-16\bar{k}_{20}\bar{k}_{40}}{80\bar{k}_{20}^2}X^3
+\frac{336\bar{k}_{20}\bar{k}_{30}\bar{k}_{40}-175\bar{k}_{30}^3-160\bar{k}_{20}^2\bar{k}_{50}}{960\bar{k}_{20}^3}X^4,\\[2ex]
y&=Y,\\[2ex]
t&=(1-\frac{\bar{k}_{30}}{2\bar{k}_{20}}X+\frac{45\bar{k}_{30}^2-48\bar{k}_{40}}{80\bar{k}_{20}^2}X^2
+\frac{336\bar{k}_{20}\bar{k}_{30}\bar{k}_{40}-175\bar{k}_{30}^3-160\bar{k}_{20}^2\bar{k}_{50}}{240\bar{k}_{20}^3}X^3)\tau,
\end{aligned}
\end{array}
\end{equation*}
system \eqref{2.22} can be written as (still denote $\tau$ by $t$)
\begin{equation}
\begin{array}{ll}\label{2.23}
\left\{
\begin{aligned}
\dot{X}&=Y,\\[2ex]
\dot{X}&=\bar{l}_{00}+\bar{l}_{10}X+\bar{l}_{01}Y+\bar{l}_{20}X^2+\bar{l}_{11}XY+\bar{l}_{30}X^3+\bar{l}_{21}X^2Y+\bar{l}_{40}X^4\\[2ex]
&\quad+\bar{l}_{31}X^3Y+\bar{l}_{50}X^5+\bar{l}_{41}X^4Y+o(|X,Y|^5),
\end{aligned}
\right.
\end{array}
\end{equation}
where $\bar{l}_{ij}$ are given in Appendix H.

Eighthly, letting $X=x_2$, $Y=y_2+\frac{\bar{l}_{21}}{3\bar{l}_{20}}y_2^2+\frac{\bar{l}_{21}^2}{36\bar{l}_{20}^2}y_2^3$ and $t=(1+\frac{\bar{l}_{21}}{3\bar{l}_{20}}y_2+\frac{\bar{l}_{21}^2}{36\bar{l}_{20}^2}y_2^2)\tau$, system \eqref{2.23} can be transformed into (still denote $\tau$ by $t$)
\begin{equation}
\begin{array}{ll}\label{2.24}
\left\{
\begin{aligned}
\dot{x_2}&=y_2,\\[2ex]
\dot{y_2}&=\bar{m}_{00}+\bar{m}_{10}x_2+\bar{m}_{01}y_2+\bar{m}_{20}x_2^2+\bar{m}_{11}x_2y_2\\[2ex]
&\quad+\bar{m}_{31}x_2^3y_2+\bar{m}_{41}x_2^4y_2+o(|x_2,y_2|^5),
\end{aligned}
\right.
\end{array}
\end{equation}
where
\begin{equation*}
\begin{array}{ll}
\begin{aligned}
\bar{m}_{00}&=l_{00},\quad \bar{m}_{10}=\bar{l}_{10},\quad \bar{m}_{01}=\bar{l}_{01}-\frac{\bar{l}_{00}\bar{l}_{21}}{\bar{l}_{20}},\quad \bar{m}_{20}=\bar{l}_{20},\\[2ex]
\bar{m}_{11}&=\bar{l}_{11}-\frac{\bar{l}_{10}\bar{l}_{21}}{\bar{l}_{20}},\quad \bar{m}_{31}=\bar{l}_{31}-\frac{\bar{l}_{21}\bar{l}_{30}}{\bar{l}_{20}},\quad \bar{m}_{41}=\bar{l}_{41}-\frac{\bar{l}_{21}\bar{l}_{40}}{\bar{l}_{20}}.
\end{aligned}
\end{array}
\end{equation*}

Ninthly, setting $x_2=\bar{m}_{20}^{\frac{1}{7}}\bar{m}_{41}^{-\frac{2}{7}}u$, $y_2=-\bar{m}_{20}^{\frac{5}{7}}\bar{m}_{41}^{-\frac{3}{7}}v$ and $t=-\bar{m}_{20}^{-\frac{4}{7}}\bar{m}_{41}^{\frac{1}{7}}\tau$, system \eqref{2.24} can be changed into
\begin{equation}
\begin{array}{ll}\label{2.25}
\left\{
\begin{aligned}
\dot{u}&=v,\\[2ex]
\dot{v}&=\bar{n}_{00}+\bar{n}_{10}u+\bar{n}_{01}v+\bar{n}_{11}uv+\bar{n}_{31}u^3v+u^2-u^4v+o(|u,v|^5),
\end{aligned}
\right.
\end{array}
\end{equation}
where
\begin{equation*}
\begin{array}{ll}
\begin{aligned}
\bar{n}_{00}&=\frac{\bar{m}_{00}\bar{m}_{41}^{\frac{4}{7}}}{\bar{m}_{20}^{\frac{9}{7}}},\quad \bar{n}_{10}=\frac{\bar{m}_{10}\bar{m}_{41}^{\frac{2}{7}}}{\bar{m}_{20}^{\frac{8}{7}}},
\quad \bar{n}_{01}=-\frac{\bar{m}_{01}\bar{m}_{41}^{\frac{1}{7}}}{\bar{m}_{20}^{\frac{4}{7}}},\\[2ex]
\bar{n}_{11}&=-\frac{\bar{m}_{11}}{\bar{m}_{20}^{\frac{3}{7}}\bar{m}_{41}^{\frac{1}{7}}},\quad \bar{n}_{31}=-\frac{\bar{m}_{31}}{\bar{m}_{20}^{\frac{1}{7}}\bar{m}_{41}^{\frac{5}{7}}}.
\end{aligned}
\end{array}
\end{equation*}

Finally, making $u=x-\frac{n_{10}}{2}$ and $v=y$, system \eqref{2.26} can be written as
\begin{equation}
\begin{array}{ll}\label{2.26}
\left\{
\begin{aligned}
\dot{x}&=y,\\[2ex]
\dot{y}&=\bar{\chi}_1+\bar{\chi}_2y+\bar{\chi}_3xy+\bar{\chi}_4x^3y+x^2-x^4y+o(|x,y|^5),
\end{aligned}
\right.
\end{array}
\end{equation}
where
\begin{equation*}
\begin{array}{ll}
\begin{aligned}
\bar{\chi}_1&=n_{00}-\frac{n_{10}^2}{4},\quad \bar{\chi}_2=n_{01}-\frac{n_{10}^4}{16}-\frac{n_{10}^3n_{31}}{8}-\frac{n_{10}n_{11}}{2},\\[2ex]
\bar{\chi}_3&=n_{11}+\frac{n_{10}^3}{2}+\frac{3n_{10}^2n_{31}}{4},\quad \bar{\chi}_4=2n_{10}+n_{31}.
\end{aligned}
\end{array}
\end{equation*}
Through calculation, we have
\begin{equation}
\begin{array}{ll}\label{2.27}
\begin{aligned}
\bigg|\frac{\partial(\bar{\chi}_1,\bar{\chi}_2,\bar{\chi}_3,\bar{\chi}_4)}{\partial(\lambda_1,\lambda_2,\lambda_3,\lambda_4)}\bigg|_{\lambda=0}\neq0.
\end{aligned}
\end{array}
\end{equation}
Therefore, we conclude that system \eqref{2.1} can undergo Bogdanov-Takens bifurcation of codimension 4.
\end{proof}

For the case $\alpha=\beta$, we can prove that system \eqref{2.1} may undergo the cusp type Bogdanov-Takens bifurcation of codimension 3 near $E^{**}$. Similar to Theorem \ref{t5}, we have the following result.
\begin{theorem}\label{t12}
System \eqref{2.1} can undergo the cusp bifurcation of codimension 3 near $E^{**}$ as $(\beta,\delta,\eta)$ varies near $(\frac{(2+\gamma)^2}{4(1+\gamma)^3},\frac{\gamma^2(2+\gamma)}{4(1+\gamma)^3},\frac{\gamma}{4(1+\gamma)^2})$. There exist a series of bifurcation with codimension 2 and 3 originating from $E^{**}$.

(i) If $\beta=\frac{1+\gamma\eta-2\sqrt{\gamma\eta}}{1+\eta}$, $\delta=\frac{\gamma\sqrt{\gamma\eta}-\eta\gamma^2}{1+\gamma}$ and $\eta\neq\frac{\gamma}{4(1+\gamma)^2}$, then system \eqref{2.1} can undergo the cusp  bifurcation of codimension 2 near $E^{**}$ for $0<\gamma<\frac{1}{\eta}$.

(ii) If $\beta=\frac{(2+\gamma)^2}{4(1+\gamma)^3}$, $\delta=\frac{\gamma^2(2+\gamma)}{4(1+\gamma)^3}$ and $\eta=\frac{\gamma}{4(1+\gamma)^2}$, then system \eqref{2.1} can undergo the cusp  bifurcation of codimension 3 near $E^{**}$ for $\gamma>0$.
\end{theorem}

Above analysis shows that bifurcations with high-codimension exhibit rich dynamical behaviors and high sensitivity to parameters. Although we cannot plot the high-dimensional bifurcation diagram, changes in the phase portraits in Figures \ref{z3} and \ref{3cycles} indicate the presence of multiple low-codimension bifurcations.
\begin{figure}[ht!]
\centering
\begin{subfigure}{0.45\linewidth}
\centering
\includegraphics[width=0.9\linewidth]{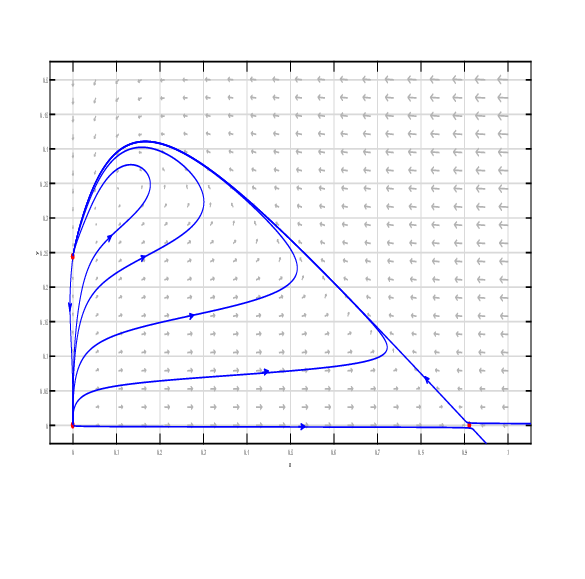}
\put(-101,10){$(a)$}
\end{subfigure}
\centering
\begin{subfigure}{0.45\linewidth}
\centering
\includegraphics[width=0.9\linewidth]{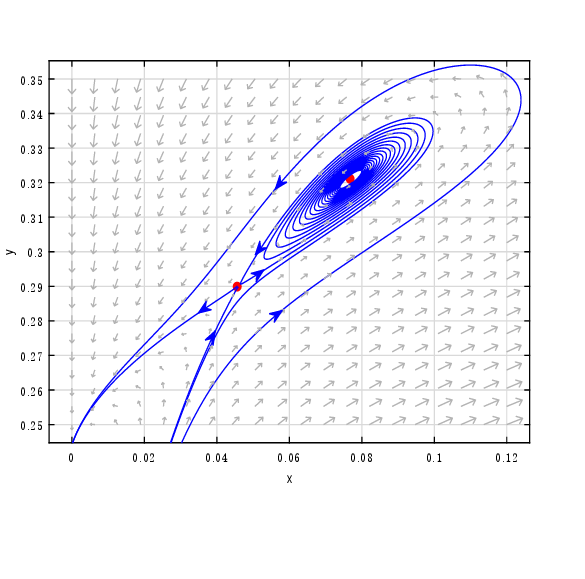}
\put(-90,10){$(b)$}
\end{subfigure}

\centering
\begin{subfigure}{0.45\linewidth}
\centering
\includegraphics[width=0.9\linewidth]{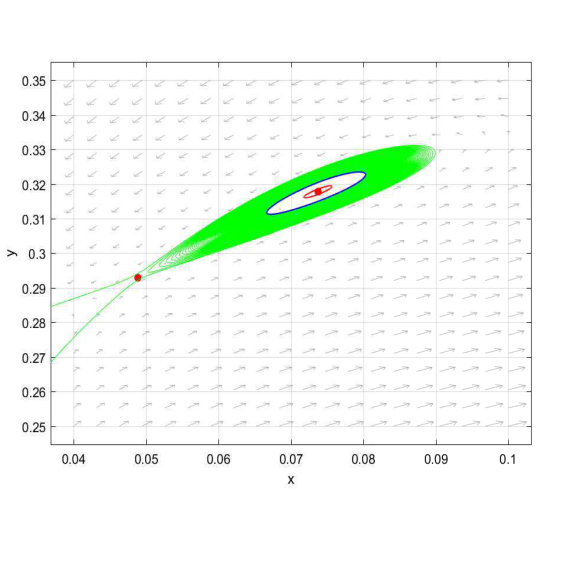}
\put(-101,10){$(c)$}
\end{subfigure}
\centering
\begin{subfigure}{0.45\linewidth}
\centering
\includegraphics[width=0.9\linewidth]{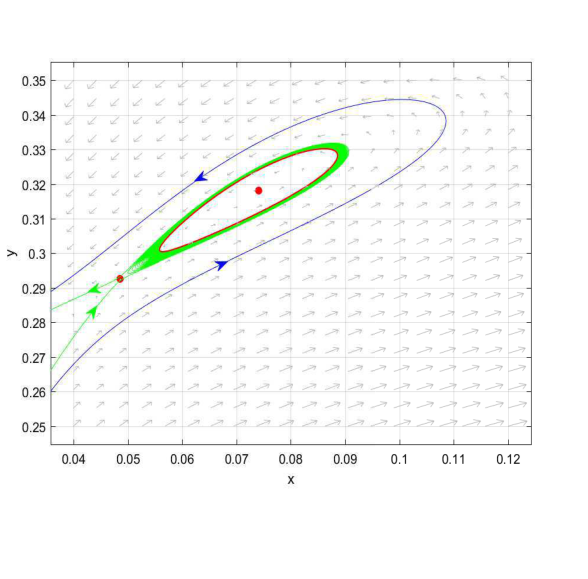}
\put(-90,10){$(d)$}
\end{subfigure}

\centering
\begin{subfigure}{0.45\linewidth}
\centering
\includegraphics[width=0.9\linewidth]{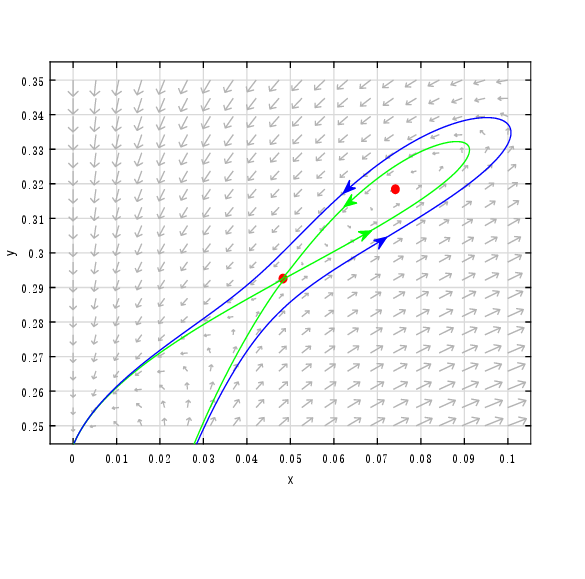}
\put(-101,10){$(e)$}
\end{subfigure}
\centering
\begin{subfigure}{0.45\linewidth}
\centering
\includegraphics[width=0.9\linewidth]{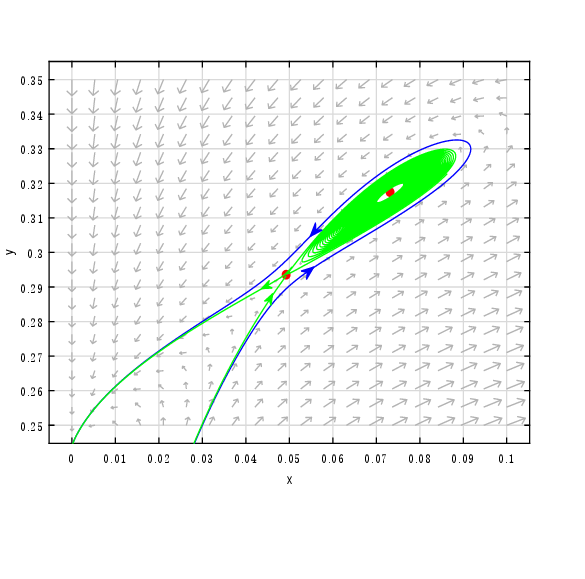}
\put(-90,10){$(f)$}
\vspace{3mm}
\end{subfigure}
\captionsetup{justification=centering}
\caption{For $\alpha=\frac{29542}{225625}$, $\gamma=\frac{3}{2}$, $\delta=\frac{31861}{361000}$ and $\eta=\frac{35239}{144400}$. (a) $\beta=\frac{240960049}{2606420000}$; (b) $\beta=\frac{59816469}{651605000}$; (c) $\beta=\frac{599598221}{6516050000}$; (d) $\beta=\frac{5994679}{65160500}$; (e) $\beta=\frac{59816469}{651605000}$; (f) $\beta=\frac{239917481}{2606420000}$. When $\beta$ decreases, system \eqref{2.1} undergoes successively saddle-node bifurcation, double limit cycle bifurcation, Hopf bifurcation and homoclinic bifurcation.}
\label{z3}
\end{figure}
\begin{figure}
\begin{center}
\begin{overpic}[scale=0.50]{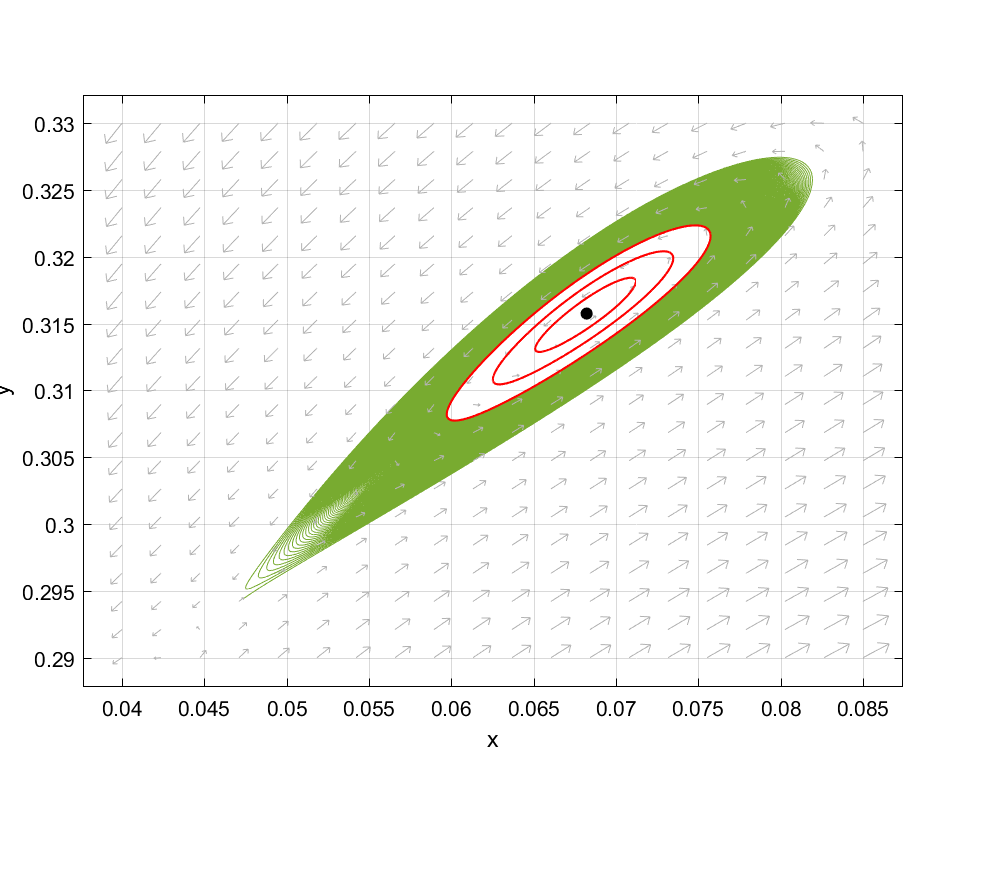}
\end{overpic}
\vspace{-15mm}
\end{center}
\caption{ Three limit cycles enclosing a stable hyperbolic focus for system \eqref{2.1} with $\alpha=\frac{49}{361}$, $\beta=\frac{12250}{130321}$, $\gamma=\frac{3}{2}$, $\delta=\frac{63}{722}$ and $\eta=\frac{886029}{3610000}$, in which the outermost and innermost limit cycles are unstable and the middle one is stable.}
\label{3cycles}
\end{figure}

\section{Hopf bifurcation}

In this section, we consider Hopf bifurcation of system \eqref{2.1} at equilibrium $E_2^{**}$ (or $E_2^*$), which satisfing $Tr(E_2^{**})=0$ and $Det(J(E_2^{**}))>0$ when $\Delta_3>0$ (or $Tr(E_2^{*})=0$ and $Det(J(E_2^{*}))>0$ when $\Delta_2>0$). First, letting $t=(x+\alpha)(x+\eta)\tau$,  system \eqref{2.1} becomes (still denote $\tau$ by $t$)
\begin{equation}
\begin{array}{ll}\label{3.1}
\left\{
\begin{aligned}
\dot{x}&=x(x+\eta)\big((1-x)(x+\alpha)-\gamma y(x+\alpha)-\beta\big),\\[2ex]
\dot{y}&=\delta y(x+\alpha)\big((x+\eta)-y\big).
\end{aligned}
\right.
\end{array}
\end{equation}
To simplify the notation,  let $E_2^{*}=(z,z+\eta)$. From \eqref{2.2} and $Tr(E_2^{*})=0$, $\alpha$ and $\gamma$ can be expresses by $\delta$, $\beta$, $\eta$ and $z$ as follows
\begin{equation}
\begin{array}{ll}\label{4.14}
\begin{aligned}
\alpha&=\alpha_0:=\frac{z(1-2z-z\gamma+\delta+\gamma\eta)}{z+\delta},\\[2ex]
\beta&=\beta_0:=\frac{z(1-z-z\gamma-\gamma\eta)^2}{z+\delta}.
\end{aligned}
\end{array}
\end{equation}

It is not difficult to show that $\alpha_0>0$, $\beta_0>0$ and $Det(J(E_2^{*}))>0$ if and only if $(\delta,\beta,\eta,z)\in\Omega^*$, where
\begin{equation}
\begin{array}{ll}\label{3.2}
\begin{aligned}
\Omega^*:&=\bigg\{(z,\delta,\gamma,\eta)\in R_+^4\bigg|0<z<\frac{1}{2},0<\delta<\frac{z(1-2z)}{2z+\eta},\frac{\delta}{z}<\gamma<\frac{1-2z-\delta}{z+\eta},\eta>0\bigg\}.
\end{aligned}
\end{array}
\end{equation}

\subsection{Case 1: $\alpha=\beta$}

In this case, we have
\begin{equation}
\begin{array}{ll}\label{5.1}
\begin{aligned}
\gamma&=\gamma_0:=\frac{z(1-z-\beta)}{(z+\beta)(z+\eta)},\\[2ex]
\delta&=\delta_0:=\frac{z\big(\beta-(z+\beta)^2\big)}{(z+\beta)^2}.
\end{aligned}
\end{array}
\end{equation}
and
\begin{equation*}
\begin{array}{ll}
\begin{aligned}
\Omega^*:&=\bigg\{(z,\beta,\eta)\in R_+^3\bigg|0<\beta<1,0<z<\sqrt{\beta}-\beta,0<\eta<\frac{z^2}{\beta-(z+\beta)^2}\bigg\}.
\end{aligned}
\end{array}
\end{equation*}
 Making the following linear transformations successively
\begin{equation*}
\begin{array}{ll}
\begin{aligned}
x&=\bar{X}+z,\quad y=\bar{Y}+z+\eta;\\[2ex]
\bar{X}&=\bar{u}-\frac{\sqrt{\beta-\beta^2-2z\beta-z^2}\sqrt{(1+\eta)z^2+2\beta\eta z+\eta\beta^2-\beta\eta}}{(z^2+2\beta z+\beta^2-\beta)\sqrt{z+\eta}}\bar{v},\quad \bar{Y}=\bar{u},\quad t=\frac{\bar{\tau}}{\sqrt{\bar{d}}},
\end{aligned}
\end{array}
\end{equation*}
where $\bar{d}=Det(J(E_2^{**}))=\frac{z^2(z+\eta)(\beta-\beta^2-2z\beta-z^2)\big((1+\eta)z^2+2\beta\eta z+\eta\beta^2-\beta\eta\big)}{(z+\beta)^2}$,  system \eqref{3.1} can be transformed as (still $\bar{\tau}$ by $t$)
\begin{equation}
\begin{array}{ll}\label{4.4}
\left\{
\begin{aligned}
\dot{\bar{u}}&=\bar{v}+\bar{p}_{11}\bar{u}\bar{v}+\bar{p}_{02}\bar{v}^2+\bar{p}_{21}\bar{u}^2\bar{v}+\bar{p}_{12}\bar{u}\bar{v}^2,\\[2ex]
\dot{\bar{v}}&=-\bar{u}+\bar{q}_{20}\bar{u}^2+\bar{q}_{11}\bar{u}\bar{v}+\bar{q}_{02}\bar{v}^2+\bar{q}_{30}\bar{u}^3+\bar{q}_{21}\bar{u}^2\bar{v}+\bar{q}_{12}\bar{u}\bar{v}^2+\bar{q}_{03}\bar{v}^3\\[2ex]
&\quad+\bar{q}_{40}\bar{u}^4+\bar{q}_{31}\bar{u}^3\bar{v}+\bar{q}_{22}\bar{u}^2\bar{v}^2+\bar{q}_{13}\bar{u}\bar{v}^3+\bar{q}_{04}\bar{v}^4,
\end{aligned}
\right.
\end{array}
\end{equation}
where $\bar{p}_{ij}$ and $\bar{q}_{ij}$ are given in Appendix I.

According to Formal Series Method (see \cite{ZT}), we can have the first four focal values as follows
\begin{equation*}
\begin{array}{ll}
\begin{aligned}
L_1&=\frac{(1-z-\beta)L_{11}}{4(z+\beta)(z+\eta)^{3/2}(\beta-\beta^2-2\beta z+z^2)^{3/2}\big((1+\eta)z^2+2\beta\eta z+\eta\beta^2-\beta\eta\big)^{3/2}},\\[2ex]
L_2&=\frac{(1-z-\beta)L_{22}}{48z(z+\beta)^2(z+\eta)^{9/2}(\beta-\beta^2-2\beta z+z^2)^{7/2}\big((1+\eta)z^2+2\beta\eta z+\eta\beta^2-\beta\eta\big)^{7/2}},\\[2ex]
L_3&=\frac{(1-z-\beta)L_{33}}{9216z^4(z+\beta)^5(z+\eta)^{15/2}(\beta-\beta^2-2\beta z+z^2)^{11/2}\big((1+\eta)z^2+2\beta\eta z+\eta\beta^2-\beta\eta\big)^{11/2}},\\[2ex]
L_4&=\frac{(1-z-\beta)L_{44}}{2211840z^6(z+\beta)^7(z+\eta)^{21/2}(\beta-\beta^2-2\beta z+z^2)^{15/2}\big((1+\eta)z^2+2\beta\eta z+\eta\beta^2-\beta\eta\big)^{15/2}},\\[2ex]
\end{aligned}
\end{array}
\end{equation*}
where
\begin{equation*}
\begin{array}{ll}
\begin{aligned}
L_{11}&=(\beta^2-\beta^3+3\beta z^2+2z^3)\beta\eta^2-(\beta^4-2\beta^5+\beta^6-6\beta^4z+6\beta^5z-9\beta^3z^2+15\beta^4z^2-\beta z^3\\[2ex]
&\quad-8\beta^2z^3+20\beta^3z^3-3\beta z^4+15\beta^2z^4+6\beta z^5+z^6)\eta-z^3(\beta^2+\beta^3+3\beta^2z+3\beta z^2+z^3),
\end{aligned}
\end{array}
\end{equation*}
and the expressions of $L_{ii}$ ($i=2,3,4$) are omitted here for brevity.  $\mathrm{Res}(f,g,x)$ denotes the Sylvester resultant of $f$ and $g$ with respect to $x$, the algebraic variety $V(\eta_1,\eta_2,\cdots,\eta_n)$ denotes the set of common zeros of $\eta_i$ $(i=1,2,\cdots,n)$, $\mathrm{Lcoeff}(f,x)$ denotes the leading coefficient of the polynomial $f$ with respect to $x$, and $\mathrm{Prem}(f,g,x)$ denotes the pseudo-remainder of $f$ with respect to $g$ in $x$. Note that if $(\beta,\eta,z)\in\bar{\Omega}^*$, then the denominators of $L_i$ are greater than zero, and all factors except $L_{ii}$, in the numerators of $L_i$ are not zero. Thus we have
\begin{equation*}
\begin{array}{ll}
\begin{aligned}
V(L_1,L_2,L_3,L_4)\cap\Omega^*=V(L_{11},L_{22},L_{33},L_{44})\cap\bar{\Omega}^*.
\end{aligned}
\end{array}
\end{equation*}
Through calculation, we have
\begin{equation}\label{4.5}
\begin{array}{ll}
\begin{aligned}
r_{12}:&=\mathrm{Res}(L_{11},L_{22},\eta)=16\beta^5z^{12}(z+\beta)^{15}(1-z-\beta)^8\kappa_1\kappa_2^3g_1,\\[2ex]
r_{13}:&=\mathrm{Res}(L_{11},L_{33},\eta)=16\beta^6z^{18}(z+\beta)^{25}(1-z-\beta)^{13}\kappa_1\kappa_2^3g_2,\\[2ex]
r_{14}:&=\mathrm{Res}(L_{11},L_{44},\eta)=1024\beta^7z^{24}(z+\beta)^{35}(1-z-\beta)^{18}\kappa_1\kappa_2^3g_3,\\[2ex]
r_{23}:&=\mathrm{Res}(g_1,g_2,z)=C_1\beta^{442}(1-\beta)^{121}(1+2\beta)^{11}(1+27\beta)^{25}\kappa_3S_1S_2,\\[2ex]
r_{24}:&=\mathrm{Res}(g_1,g_3,z)=C_2\beta^{719}(1-\beta)^{181}(1+2\beta)^{17}(1+27\beta)^{40}\kappa_3S_3S_4,\\[2ex]
\mathrm{Res}(R_3,R_4,m)&=-4209701645981302506135597237760,
\end{aligned}
\end{array}
\end{equation}
where
\begin{equation*}
\begin{array}{ll}
\begin{aligned}
\kappa_1&=(z^2+2\beta z+\beta^2-\beta)^2(\beta^3-\beta^2-3\beta z^2-2z^3),\\[2ex]
\kappa_2&=z^3+3\beta z^2+3\beta^2z+\beta z+\beta^3-\beta^2,\\[2ex]
\kappa_3&=(1+\beta)^2(63+\beta)(1+\beta^2)(27\beta^2+18\beta+4),\\[2ex]
C_1&=202552894210739726470935999925510560062373888000,\\[2ex]
C_2&=14815297405128391421874175994551629535990775808000,
\end{aligned}
\end{array}
\end{equation*}
and $g_i$ ($i=1,2,3$), $S_j$ ($j=1,2,3,4$) are omitted for brevity.

In the following, we use four steps to show that $V(L_{11},L_{22},L_{33},L_{44})\cap\bar{\Omega}^*=\emptyset$.

$\mathbf{Step\ I:}$ Simplify the algebraic variety $V(L_{11},L_{22},L_{33})\cap\Omega^*$ and $V(L_{11},L_{22},L_{33},L_{44})\cap\bar{\Omega}^*$.

Through analysis, we have the following decompositions
\begin{equation*}
\begin{array}{ll}
\begin{aligned}
V(L_{11},L_{22},L_{33})=V(L_{11},L_{22},L_{33},\mathrm{Lcoeff}(L_{11},\beta))\cup\bigg(\frac{L_{11},L_{22},L_{33},r_{12},r_{13}}{\mathrm{Lcoeff}(L_{11},\beta)}\bigg),
\end{aligned}
\end{array}
\end{equation*}
and
\begin{equation*}
\begin{array}{ll}
\begin{aligned}
V(L_{11},L_{22},L_{33},L_{44})=V(L_{11},L_{22},L_{33},L_{44},\mathrm{Lcoeff}(L_{11},\beta))\cup\bigg(\frac{L_{11},L_{22},L_{33},L_{44},r_{12},r_{13},r_{14}}{\mathrm{Lcoeff}(L_{11},\beta)}\bigg),
\end{aligned}
\end{array}
\end{equation*}
where $V\big(\frac{\xi_1,\cdots,\xi_n}{\eta_1,\cdots,\eta_m}\big)=V(\xi_1,\cdots,\xi_n)\setminus(\mathop{\cup}\limits_{i=1}^{m} V(\eta_i))$, denotes the set of common zeros of $\xi_i$ ($i=1,2,\cdots,n$) when $\eta_i$ ($i=1,2,\cdots,m$) do not vanish.

For $(\beta,\eta,z)\in\bar{\Omega}^*$, it follows that
\begin{equation}\label{5.5}
\begin{array}{ll}
\begin{aligned}
\mathrm{Lcoeff}(L_{11},h)=2\beta z^3+3\beta^2z^2-\beta^4+\beta^3>0,\quad \mathrm{Lcoeff}(g_1,z)=4>0,
\end{aligned}
\end{array}
\end{equation}
and the first five factors of the right-hand side of $r_{12}$, $r_{13}$ and $r_{14}$ are all negative since $(\beta,z)\in\bar{\Omega}^*$. From \eqref{4.5}, we have
\begin{equation}\label{5.6}
\begin{array}{ll}
\begin{aligned}
V(L_{11},L_{22},L_{33})\cap\bar{\Omega}^*=V(L_{11},L_{22},L_{33},r_{12},r_{13})\cap\bar{\Omega}^*=\{V_1,V_2,V_3\}\cap\bar{\Omega}^*,
\end{aligned}
\end{array}
\end{equation}
where
\begin{equation}\label{5.7}
\begin{array}{ll}
\begin{aligned}
V_1=V(L_{11},L_{22},L_{33},\kappa_1),\quad V_2=V(L_{11},L_{22},L_{33},\kappa_2),\quad V_3=V(L_{11},L_{22},L_{33},g_1,g_2).
\end{aligned}
\end{array}
\end{equation}
Similarly, by \eqref{4.5} and \eqref{5.5}, we can decompose $V_3$ as
\begin{equation}\label{5.8}
\begin{array}{ll}
\begin{aligned}
V_3=V(L_{11},L_{22},L_{33},g_1,g_2,r_{23})=V_{33}\cup V_{34},
\end{aligned}
\end{array}
\end{equation}
where
\begin{equation}\label{5.9}
\begin{array}{ll}
\begin{aligned}
V_{33}&=V(L_{11},L_{22},L_{33},g_1,g_2,S_{11}),\quad V_{34}=V(L_{11},L_{22},L_{33},g_1,g_2,S_{22}).
\end{aligned}
\end{array}
\end{equation}
Similarly, we can decompose $V(L_{11},L_{22},L_{33},L_{44})$ as
\begin{equation}\label{5.10}
\begin{array}{ll}
\begin{aligned}
V(L_{11},L_{22},L_{33},L_{44})\cap\bar{\Omega}^*&=V(L_{11},L_{22},L_{33},L_{44},r_{12},r_{13},r_{14})\cap\bar{\Omega}^*\\[2ex]
&=\{\bar{V}_1,\bar{V}_2,\bar{V}_3\}\cap\bar{\Omega}^*,
\end{aligned}
\end{array}
\end{equation}
where
\begin{equation*}
\begin{array}{ll}
\begin{aligned}
\bar{V}_{1}&=V(L_{11},L_{22},L_{33},L_{44},\kappa_1),\\[2ex]
\bar{V}_{2}&=V(L_{11},L_{22},L_{33},L_{44},\kappa_2),\\[2ex]
\bar{V}_{3}&=V(L_{11},L_{22},L_{33},L_{44},g_1,g_2,g_3).
\end{aligned}
\end{array}
\end{equation*}
Note that the first five factors of $r_{23}$ and $r_{24}$ in \eqref{4.5} are all positive. By \eqref{4.5} and \eqref{5.5}, we can decompose $\bar{V}_3$ as
\begin{equation}\label{5.12}
\begin{array}{ll}
\begin{aligned}
\bar{V}_{33}&=V(L_{11},L_{22},L_{33},L_{44},g_1,g_2,g_3,S_1S_2,S_3S_4).
\end{aligned}
\end{array}
\end{equation}

$\mathbf{Step\ II:}$ Prove that $V_1\cap\bar{\Omega}^*=\emptyset$ and $V_2\cap\bar{\Omega}^*=\emptyset$.
\\
Define
\begin{equation}\label{4.8}
\begin{array}{ll}
\begin{aligned}
z^*=\frac{\kappa^2-3\beta\kappa-3\beta}{3\kappa},\quad \kappa=\sqrt[3]{27\beta^2+3\beta\sqrt{3(\beta+\beta^2)}},\quad 0<\beta<1.
\end{aligned}
\end{array}
\end{equation}
It is easy to check that $\kappa_1<0$ for $(\beta,z)\in\bar{\Omega}^*$, which implies
\begin{equation*}
\begin{array}{ll}
\begin{aligned}
V_1\cap\bar{\Omega}^*=\emptyset.
\end{aligned}
\end{array}
\end{equation*}
While there exists a unique root $z=z^*$ such that $\kappa_2=0$ for $(\beta,z)\in\bar{\Omega}^*$. Next, we show that $V_2\cap\bar{\Omega}^*=\emptyset$.

\begin{lemma}\label{l11}
If $(\delta,m,z)\in\bar{\Omega}^*$ and \eqref{5.1} hold, then  $V_2\cap\bar{\Omega}^*=V(L_{11},L_{22},L_{33},R_1)=\emptyset$. Moreover, for $\kappa_2=0$ (i.e. $z=z^*$),
the following statements hold:

(i) If $\eta\neq\eta_2$, then $E_2^{**}$ is a  weak focus of order 1;

(ii) If $\eta=\eta_2$, then $E_2^{**}$ is a stable weak focus of order 2.
\end{lemma}
\begin{proof}
By substituting $z=z^*$ (i.e. $\kappa_2=0$) into $L_{ii}$ ($i=1, 2$), we have
\begin{equation}\label{4.6}
\begin{array}{ll}
\begin{aligned}
L_{11}=\frac{1}{729\kappa^6}\bar{L}_{11},\quad L_{22}=\frac{1}{387420489\kappa^{18}}\bar{L}_{22},
\end{aligned}
\end{array}
\end{equation}
where
\begin{equation*}
\begin{array}{ll}
\begin{aligned}
\bar{L}_{11}:&=-27\beta\kappa^3\eta^2(54\beta^3+81\beta^3\kappa-54\beta^2\kappa^2-81\beta^2\kappa^3+18\beta\kappa^4+9\beta\kappa^5-2\kappa^6)-\eta(729\beta^6\\[2ex]
&\quad-3645\beta^5\kappa^2+729\beta^4\kappa^3-2916\beta^5\kappa^3+6318\beta^4\kappa^4-2187\beta^5\kappa^4-729\beta^3\kappa^5+5103\beta^4\kappa^5\\[2ex]
&\quad-3456\beta^3\kappa^6+2916\beta^4\kappa^6+243\beta^2\kappa^7-1701\beta^3\kappa^7+702\beta^2\kappa^8-243\beta^3\kappa^8-27\beta\kappa^9\\[2ex]
&\quad+108\beta^2\kappa^9-45\beta\kappa^{10}+\kappa^{12})-(3\beta+3\beta\kappa-\kappa^2)^3(27\beta^3-27\beta^2\kappa^2-27\beta^2\kappa^3+9\beta\kappa^4-\kappa^6),
\end{aligned}
\end{array}
\end{equation*}
while we omit the expression of $\bar{L}_{22}$ and $\bar{L}_{33}$ for brevity.
Substituting $z=z^*$ into $0<\eta<\frac{z^2}{\beta-(z+\beta)^2}$, we have
\begin{equation*}
\begin{array}{ll}
\begin{aligned}
0<\eta<\eta^*:=\frac{(\kappa^2-3\beta\kappa-3\beta)^2}{-\kappa^4+15\beta\kappa^2-9\beta^2}.
\end{aligned}
\end{array}
\end{equation*}
Next, we treat $\bar{L}_{11}$ as a quadratic functions of $\eta$, whose discriminant is
\begin{equation*}
\begin{array}{ll}
\begin{aligned}
\Delta_\eta&=59049\beta^{10}-551124\beta^9\kappa^2-354294\beta^8\kappa^3-472392\beta^9\kappa^3-354294\beta^9\kappa^4+472392\beta^7\kappa^5\\[2ex]
&\quad-669222\beta^8\kappa^5+59049\beta^6\kappa^6+288684\beta^7\kappa^6+354294\beta^8\kappa^6+118098\beta^6\kappa^7+1850202\beta^7\kappa^7\\[2ex]
&\quad+708588\beta^8\kappa^7-78732\beta^5\kappa^8+809190\beta^6\kappa^8+944784\beta^7\kappa^8+531441\beta^8\kappa^8-288684\beta^5\kappa^9\\[2ex]
&\quad-293058\beta^6\kappa^9+118098\beta^7\kappa^9+39366\beta^4\kappa^{10}-590490\beta^5\kappa^{10}-150903\beta^6\kappa^{10}-354294\beta^7\kappa^{10}\\[2ex]
&\quad+96228\beta^4\kappa^{11}+97686\beta^5\kappa^{11}-39366\beta^6\kappa^{11}-8748\beta^3\kappa^{12}+89910\beta^4\kappa^{12}+104976\beta^5\kappa^{12}\\[2ex]
&\quad+59049\beta^6\kappa^{12}-4374\beta^3\kappa^{13}-68526\beta^4\kappa^{13}-26244\beta^5\kappa^{13}+729\beta^2\kappa^{14}+3564\beta^3\kappa^{14}+\kappa^{20}\\[2ex]
&\quad+4374\beta^4\kappa^{14}-1944\beta^2\kappa^{15}+2754\beta^3\kappa^{15}-486\beta^3\kappa^{16}+162\beta\kappa^{17}+216\beta^2\kappa^{17}-84\beta\kappa^{18}.
\end{aligned}
\end{array}
\end{equation*}
We find that $\Delta_\eta>0$ for $0<\beta<1$. Thus, $\bar{L}_{11}$ has two real roots, $\eta_1<0$ and $0<\eta_2<\eta^*$ when $0.1667<\beta<1$. According to the monotonicity of $\bar{L}_{11}$ with respect to $\eta$, we have that $\bar{L}_{11}<0$ $(>0)$  when $0<\eta<\eta_2$ ($\eta_2<\eta<\eta^*$), i.e., $E_2^{**}$ is a stable (an unstable) weak focus of order 1 when $z=z^*$ and $0<\eta<\eta_2$ ($\eta_2<\eta<\eta^*$).

When $\eta=\eta_2$, $\bar{L}_{11}=0$. By direct calculation, we have $\bar{L}_{22}<0$, then $E^{**}$ is a weak focus of order 2.
\end{proof}

$\mathbf{Step\ III:}$  We show that $V(L_{11},L_{22},L_{33},L_{44})\cap\bar{\Omega}^*=\emptyset$.
\begin{lemma}\label{l9}
If $(\beta,\eta,z)\in\bar{\Omega}^*$ and \eqref{5.1} hold, then  $V(L_{11},L_{22},L_{33},L_{44})\cap\bar{\Omega}^*=\emptyset$. Moreover,
if $\kappa_2\neq0$, then $E_2^{**}$ is a weak focus of order at most 4, which is exhibited in Fig. \ref{Hopf0}.
\end{lemma}
\begin{figure}
\begin{center}
\begin{overpic}[scale=0.70]{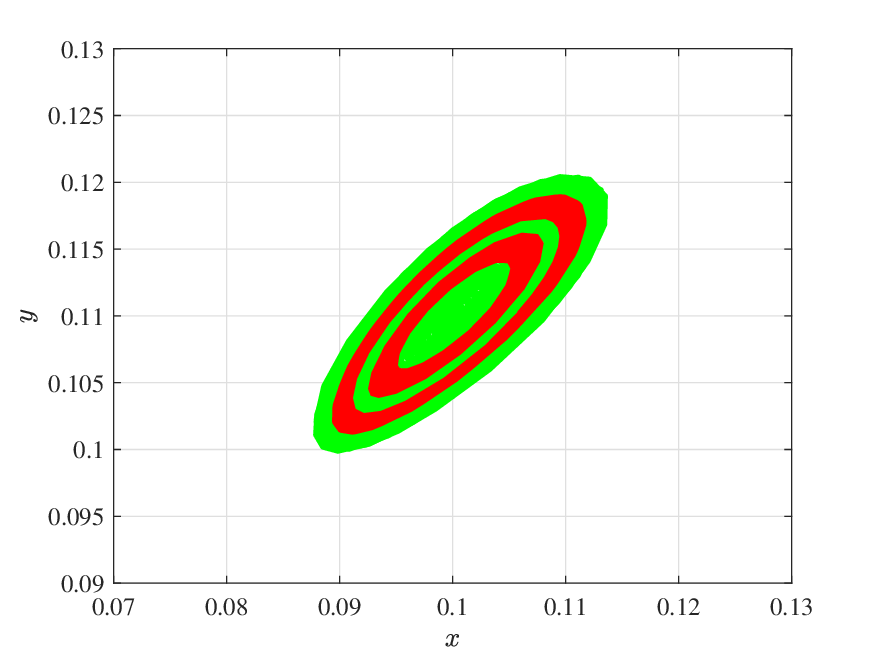}
\end{overpic}
\vspace{-6mm}
\end{center}
\caption{ For $\alpha=\frac{1}{2}$, $\beta=\frac{1}{2}$, $\gamma=\frac{20}{33}$, $\delta=\frac{7}{180}$ and $\eta=\frac{1}{100}$, system \eqref{2.1} exists four limit cycles created by Hopf bifurcation around $E_2^{**}$ when $\alpha=\beta$.\label{Hopf0}}
\end{figure}
\begin{proof}
From \eqref{5.7}-\eqref{5.10}, Lemmas \ref{l11}, we have that $\bar{V}_1\cap\bar{\Omega}^*\subseteq V_1\cap\bar{\Omega}^*$ and $\bar{V}_2\cap\bar{\Omega}^*\subseteq V_2\cap\bar{\Omega}^*$. Thus $\bar{V}_1\cap\bar{\Omega}^*=\emptyset$ and $\bar{V}_2\cap\bar{\Omega}^*=\emptyset$. Besides, we have
\begin{equation*}
\begin{array}{ll}
\begin{aligned}
\mathrm{Res}(S_1S_2,S_3S_4,\beta)\neq0,
\end{aligned}
\end{array}
\end{equation*}
and $V(S_1S_2,S_3S_4)=\emptyset$, which implies $\bar{V}_{33}\cap\bar{\Omega}^*\subseteq V(S_1S_2,S_3S_4)=\emptyset$. From \eqref{5.10}, it yields that $V(L_{11},L_{22},L_{33},L_{44})\cap\bar{\Omega}^*=\emptyset$. Hence, $E_2^*$ is a weak focus of order at most 4.
\end{proof}

From Lemma \ref{l9}, we have the following results.
\begin{lemma}\label{l10}
If $(\beta,\eta,z)\in\bar{\Omega}^*$ and \eqref{5.1} hold, then $E_2^{**}$ is a weak focus of order at most 4, and

(i) its order is 1 if $(\beta,\eta,z)\in\Gamma_1:=\{(\beta,\eta,z)\in\bar{\Omega}^*:L_{11}(\beta,\eta,z)\neq0\}$;

(ii) its order is 2 if $(\beta,\eta,z)\in\Gamma_2:=\{(\beta,\eta,z)\in\bar{\Omega}^*:L_{11}(\beta,\eta,z)=0, L_{22}(\beta,\eta,z)\neq0\}$;

(iii)its order is 3 if $(\beta,\eta,z)\in\Gamma_3:=\{(\beta,\eta,z)\in\bar{\Omega}^*:L_{11}(\beta,\eta,z)=0, L_{22}(\beta,\eta,z)=0, L_{33}(\beta,\eta,z)\neq0\}$;

(iv) its order is if $(\beta,\eta,z)\in\Gamma_4:=\{(\beta,\eta,z)\in\bar{\Omega}^*\setminus(\Gamma_1\cup\Gamma_2\cup\Gamma_3)\}$.
\end{lemma}

Next, we show that there exist one set of parameter values in $\bar{\Omega}^*$ such that $L_{11}=L_{22}=0$ and $L_{33}\neq0$, i.e. $E_2^{**}$ is a weak focus of order 3.

\begin{lemma}
If $(\beta,\eta,z)\in\bar{\Omega}^*$ and \eqref{5.1} hold, then $\Gamma_i\neq\emptyset$ ($i=1, 2, 3$).
\end{lemma}
\begin{proof}
Obviously, by Lemma \ref{l11}(i) and (ii), we obtain $\Gamma_1\neq\emptyset$ and $\Gamma_2\neq\emptyset$. Next, we consider Lemma \ref{l9} to show $\Gamma_3\neq\emptyset$.

If $\kappa_2\neq0$, for some suitable parameter values, $E_2^{**}$ can be a weak focus of order 3. Fixing $\beta=\frac{1}{2}$,  $L_{ii}$ ($i=1, 2, 3,$) can be simplified as
\begin{equation}
\begin{array}{ll}\label{4.16}
\begin{aligned}
L_{11}=\frac{l_{11}}{64},\quad L_{22}=\frac{l_{22}}{65536},\quad L_{33}=\frac{l_{33}}{67108864},
\end{aligned}
\end{array}
\end{equation}
where
\begin{equation*}
\begin{array}{ll}
\begin{aligned}
l_{11}&=-\eta+4\eta^2+12\eta hz+12\eta z^2+48\eta^2z^2-24z^3+64\eta^2z^3-48z^4\\[2ex]
&\quad-144\eta z^4-96z^5-192\eta z^5-64z^6-64\eta z^6
\end{aligned}
\end{array}
\end{equation*}
and $l_{22}$, $l_{33}$ are omitted for brevity. Through calculate, we have
\begin{equation}
\begin{array}{ll}\label{4.17}
\begin{aligned}
\breve{r}_{12}&=\mathrm{Res}(l_{11},l_{22},\eta)=131072z^{12}k_{11}l_{12},\\[2ex]
\breve{r}_{13}&=\mathrm{Res}(l_{11},l_{33},\eta)=8388608z^{12}k_{11}l_{13},\\[2ex]
\breve{r}_{23}&=\mathrm{Res}(l_{12},l_{13},\eta)<0,
\end{aligned}
\end{array}
\end{equation}
where
\begin{equation*}
\begin{array}{ll}
\begin{aligned}
k_{11}&=(1-2z)^8(1+2z)^{15}(4z^2+4z-1)^2(8z^3+12z^2+10z-1)^3(16z^3+12z^2+1),\\[2ex]
k_{22}&=(1-2z)^{13}(1+2z)^{25}(4z^2+4z-1)^2(8z^3+12z^2+10z-1)^3(16z^3+12z^2+1),\\[2ex]
l_{12}&=12+137z-342z^2-6012z^3-18872z^4-50608z^5-189024 z^6-814272z^7-1534848z^8\\[2ex]
&\quad-2886912z^9-9378304z^{10}-19174400z^{11}-24307712z^{12}-29691904z^{13}-38412288z^{14}\\[2ex]
&\quad-35799040z^{15}-19300352z^{16}-5242880z^{17}-524288z^{18},
\end{aligned}
\end{array}
\end{equation*}
and $l_{13}$ is omitted for brevity. Note that $\mathrm{lcoeff}(l_{11},\eta)=4(1+12z^2+16z^3)>0$, $\mathrm{lcoeff}(l_{12},z)=-524288\neq0$ and the five factors in $k_{11}$ and $k_{22}$ are all nonzero. Then, we get
\begin{equation}\label{4.18}
\begin{array}{ll}
\begin{aligned}
V(L_{11},L_{22},L_{33})\cap\bar{\Omega}^*=V(L_{11},L_{22},L_{33},\breve{r}_{12},\breve{r}_{13})\cap\bar{\Omega}^*=V(L_{11},L_{22},L_{33},\breve{r}_{12},\breve{r}_{13},,\breve{r}_{23})\cap\bar{\Omega}^*.
\end{aligned}
\end{array}
\end{equation}
From $\breve{r}_{23}\neq0$, it yields that $V(L_{11},L_{22},L_{33})\cap\bar{\Omega}^*=\emptyset$.

In what follows, we prove $V(L_{11},L_{22})\cap\bar{\Omega}^*=\emptyset$. It is easy to show that $l_{12}$ has exactly one real zero $\hat{z}$ in $(0,\sqrt{\frac{1}{2}}-\frac{1}{2})$, which is covered by
\begin{equation*}
\begin{array}{ll}
\begin{aligned}
\hat{I}&=\bigg[\frac{2289359021}{17179869184},\frac{4578718043}{34359738368}\bigg]\in(0,\sqrt{\frac{1}{2}}-\frac{1}{2}).
\end{aligned}
\end{array}
\end{equation*}

Similarly, from  Sturm's Theorem, it follows that
\begin{equation*}
\begin{array}{ll}
\begin{aligned}
V(L_{11},L_{22})\cap\{\beta=\frac{1}{2}\}\cap\bar{\Omega}^*=V(l_{11},l_{22},l_{12})\cap\{\beta=\frac{1}{2}\}\cap\bar{\Omega}^*\neq\emptyset,
\end{aligned}
\end{array}
\end{equation*}
which implies that $E_2^{**}$ is a weak focus of order exactly 3.

\end{proof}

\subsection{Case 2: $\alpha\neq\beta$}

Now, we compute the first five focal values of system \eqref{3.1}. Make the following linear transformations successively
\begin{equation*}
\begin{array}{ll}
\begin{aligned}
x&=X+z,\quad y=Y+z+\eta;\\[2ex]
X&=u+\frac{\sqrt{z\gamma-\delta}}{\sqrt{\delta}}v,\quad Y=u,\quad t=\frac{\tau}{\sqrt{d}},
\end{aligned}
\end{array}
\end{equation*}
where $d=Det(J(E_2^{*}))=\frac{z^2\delta(z\gamma-\delta)(z+\eta)^2(1-z-z\gamma-\gamma\eta)^2}{(z+\delta)^2}$,  system \eqref{3.1} can be transformed into (still $\tau$ by $t$)
\begin{equation}
\begin{array}{ll}\label{3.3}
\left\{
\begin{aligned}
\dot{u}&=v+p_{11}uv+p_{02}v^2+p_{21}u^2v+p_{12}uv^2,\\[2ex]
\dot{v}&=-u+q_{20}u^2+q_{11}uv+q_{02}v^2+q_{30}u^3+q_{21}u^2v+q_{12}uv^2+q_{03}v^3\\[2ex]
&\quad+q_{40}u^4+q_{31}u^3v+q_{22}u^2v^2+q_{13}uv^3+q_{04}v^4,
\end{aligned}
\right.
\end{array}
\end{equation}
where $p_{ij}$ and $q_{ij}$ are given Appendix J.

According to Formal Series Method (see \cite{ZT}), we can obtain the first five focal values as follows
\begin{equation}
\begin{array}{ll}\label{4.15}
\begin{aligned}
L_1&=\frac{\gamma L_{11}}{4z\sqrt{\delta^3(z\gamma-\delta)^3}(z+\eta)(1-z-z\gamma-\gamma\eta)^2},\\[2ex]
L_2&=\frac{\gamma L_{22}}{48z^2\sqrt{\delta^7(z\gamma-\delta)^7}(z+\eta)^3(1-z-z\gamma-\gamma\eta)^4},\\[2ex]
L_3&=\frac{\gamma L_{33}}{9216z^5\sqrt{\delta^{11}(z\gamma-\delta)^{11}}(z+\eta)^5(1-z-z\gamma-\gamma\eta)^6},\\[2ex]
L_4&=\frac{\gamma L_{44}}{2211840z^7\sqrt{\delta^{15}(z\gamma-\delta)^{15}}(z+\eta)^7(1-z-z\gamma-\gamma\eta)^8},\\[2ex]
L_5&=\frac{\gamma L_{55}}{25480396800z^9\sqrt{\delta^{19}(z\gamma-\delta)^{19}}(z+\eta)^9(1-z-z\gamma-\gamma\eta)^{10}},
\end{aligned}
\end{array}
\end{equation}
where
\begin{equation*}
\begin{array}{ll}
\begin{aligned}
L_{11}&=(4\gamma^2+7\gamma+2)z^5+(\gamma^3\delta+10\gamma^2\delta+6\gamma^2\eta+5\gamma\eta+15\gamma\delta-4\gamma+2\delta+2\eta)z^4\\[2ex]
&\quad+(2\gamma^3\delta\eta+14\gamma^2\delta\eta+2\gamma^2\eta^2+3\gamma^2\delta^2+5\gamma\delta^2-2\gamma^2\delta-9\gamma\delta\eta+4\delta\eta-2\gamma\eta\\[2ex]
&\quad-4\delta^2-10\gamma\delta+3\delta)z^3+(\gamma^3\delta\eta^2+4\gamma^2\delta\eta^2-\gamma\delta\eta^2+4\gamma^2\delta^2\eta+\gamma\delta^2\eta+\delta^2\eta\\[2ex]
&\quad-2\gamma^2\delta\eta-4\gamma\delta\eta+\delta\eta-4\delta^3-2\gamma\delta^2+8\delta^2+\gamma\delta)z^2+(\gamma^2\delta^2\eta^2-2\gamma\delta^2\eta^2\\[2ex]
&\quad-\gamma\delta^3\eta-2\delta^3\eta+2\delta^2\eta-\delta^4+3\delta^3-\delta^2)z-\delta^3\eta(\delta+\gamma\eta-1).
\end{aligned}
\end{array}
\end{equation*}
and the expressions of $L_{ii}$ ($i=2,3,4,5$) are omitted here for brevity.

Due to the extremely complex calculations, we only consider the special case  $\delta=\frac{1}{10}$. From \eqref{4.15}, we have
\begin{equation*}
\begin{array}{ll}
\begin{aligned}
L_1&=\frac{\gamma L_{11}}{40z(z+\eta)(\gamma z+z+\eta\gamma-1)^2(10\gamma z-1)^{3/2}},\\[2ex]
L_2&=\frac{\gamma L_{22}}{4800z^2(z+\eta)^3(\gamma z+z+\eta\gamma-1)^4(10\gamma z-1)^{7/2}},\\[2ex]
L_3&=\frac{\gamma L_{33}}{23040000z^5(z+\eta)^5(\gamma z+z+\eta\gamma-1)^6(10\gamma z-1)^{11/2}},\\[2ex]
L_4&=\frac{\gamma L_{44}}{552960000000z^7(z+\eta)^7(\gamma z+z+\eta\gamma-1)^8(10\gamma z-1)^{15/2}},\\[2ex]
L_5&=\frac{\gamma L_{55}}{318504960000000000z^9(z+\eta)^9(\gamma z+z+\eta\gamma-1)^{10}(10\gamma z-1)^{19/2}},
\end{aligned}
\end{array}
\end{equation*}
where
\begin{equation*}
\begin{array}{ll}
\begin{aligned}
L_{11}&=10\eta^2\gamma(10\gamma z-1-20z-100z^2+400\gamma z^2+100\gamma^2z^2+2000\gamma z^3)+\eta(9+180z-10\gamma z\\[2ex]
&\quad+1100z^2-3900\gamma z^2-1600\gamma^2z^2+4000z^3-11000\gamma z^3+14000\gamma^2z^3+2000\gamma^3z^3\\[2ex]
&\quad+20000z^4+50000\gamma z^4+60000\gamma^2z^4)+z(760z-71+800\gamma z+2600z^2-9500\gamma z^2\\[2ex]
&\quad-1700\gamma^2z^2+2000z^3-25000\gamma z^3+10000\gamma^2z^3+1000\gamma^3z^3+20000z^4+70000\gamma z^4\\[2ex]
&\quad+40000\gamma^2z^4),
\end{aligned}
\end{array}
\end{equation*}
and the expressions of $L_{ii}$ ($i=2,3,4,5$) are omitted here for brevity.
Note that if $(\gamma,\eta,z)\in\Omega^*$, then the denominators of $L_i$ are greater than zero, and all factors except $L_{ii}$, in the numerators of $L_i$ are not zero. Thus we have
\begin{equation*}
\begin{array}{ll}
\begin{aligned}
V(L_1,L_2,L_3,L_4)\cap\Omega^*=V(L_{11},L_{22},L_{33},L_{44})\cap\Omega^*.
\end{aligned}
\end{array}
\end{equation*}

Through calculation, we have
\begin{equation*}
\begin{array}{ll}
\begin{aligned}
r_{12}:&=\mathrm{Res}(L_{11},L_{22},\eta)=C_1\gamma^4z^3(1-z)^3(1+10z)^{10}(10\gamma z-1)^6r_1r_2g_1,\\[2ex]
r_{13}:&=\mathrm{Res}(L_{11},L_{33},\eta)=C_2\gamma^7z^8(1-z)^5(1+10z)^{14}(10\gamma z-1)^8r_1r_2g_2,\\[2ex]
r_{14}:&=\mathrm{Res}(L_{11},L_{44},\eta)=C_3\gamma^8z^{10}(1-z)^7(1+10z)^{18}(10\gamma z-1)^{10}r_1r_2g_3,\\[2ex]
r_{15}:&=\mathrm{Res}(L_{11},L_{55},\eta)=C_4\gamma^{10}z^{12}(1-z)^9(1+10z)^{22}(10\gamma z-1)^{12}r_1r_2g_4,\\[2ex]
r_{23}:&=\mathrm{Res}(g_1,g_2,z)=C_5\gamma^{36}(1+\gamma)^{44}(9+5\gamma)r_3h_1^3h_2h_3h_4S_1S_2,\\[2ex]
r_{24}:&=\mathrm{Res}(g_1,g_3,z)=C_6\gamma^{66}(1+\gamma)^{66}(9+5\gamma)^2r_3h_1^5h_2h_3h_4S_3S_4,\\[2ex]
r_{25}:&=\mathrm{Res}(g_1,g_4,z)=C_7\gamma^{77}(1+\gamma)^{88}(9+5\gamma)^3r_3h_1^7h_2h_3h_4S_5S_6,\\[2ex]
\tilde{r}_{12}:&=\mathrm{Res}(h_1,h_2,\gamma)=-1600,\quad\tilde{r}_{13}:=\mathrm{Res}(h_1,h_3,\gamma)=-11552,\\[2ex]
\tilde{r}_{14}:&=\mathrm{Res}(h_1,h_4,\gamma)=-12848,\quad\tilde{r}_{23}:=\mathrm{Res}(h_2,h_3,\gamma)=-27392,\\[2ex]
\tilde{r}_{24}:&=\mathrm{Res}(h_2,h_4,\gamma)=-49968,\quad\tilde{r}_{34}:=\mathrm{Res}(h_3,h_4,\gamma)=42496,
\end{aligned}
\end{array}
\end{equation*}
where
\begin{equation*}
\begin{array}{ll}
\begin{aligned}
r_1&=10\gamma z-1-20z-100z^2+400\gamma z^2+100\gamma^2z^2+2000\gamma z^3,\\[2ex]
r_2&=100z^2+25z-4,\quad C_4=6553600000000000000000000000,\quad C_5>0,\quad C_6>0,\quad C_7>0,\\[2ex]
r_3&=(2+r)^8(21+10r)(3+3r+r^2)^2(1+4r+2r^2)^2(-3+17r+9r^2)(-1+21r+11r^2),\\[2ex]
C_1&=32000000,\quad C_2=32000000000000,\quad C_3=20480000000000000000,\\[2ex]
h_1&=8\gamma^2-5\gamma-2,\quad h_2=8\gamma^2+5\gamma-2,\quad h_3=2\gamma^3-9\gamma^2-20\gamma-4,\quad h_4=6\gamma^3+\gamma^2-20\gamma-4,
\end{aligned}
\end{array}
\end{equation*}
and $g_i$ ($i=1,2,3,4$), $S_j$ ($j=1,2,3,4,5,6$) are omitted for brevity.

$\mathbf{Step\ I:}$ Show that $V_1\cap\Omega^*=\emptyset$ and $V_2\cap\Omega^*=\emptyset$.

Define
\begin{equation}\label{4.11}
\begin{array}{ll}
\begin{aligned}
z^*=\frac{\sqrt{89}-5}{40}.
\end{aligned}
\end{array}
\end{equation}
It is easy to check that $r_1>0$ for $(\gamma,\eta,z)\in\Omega^*$, which implies
\begin{equation*}
\begin{array}{ll}
\begin{aligned}
V_1\cap\Omega^*=\emptyset.
\end{aligned}
\end{array}
\end{equation*}
While there exists a unique root $z=z^*$ such that $r_2=0$ for $(\gamma,\eta,z)\in\Omega^*$. Next, we show that $V_2\cap\Omega^*=\emptyset$.

\begin{lemma}\label{l12}
If $(\gamma,\eta,z)\in\Omega^*$ and \eqref{5.1} hold, then  $V_2\cap\Omega^*=V(L_{11},L_{22},L_{33},r_2)=\emptyset$. Moreover, for $r_2=0$ (i.e. $z=z^*$),
the following statements hold.

(i) If $\eta\neq\eta_2$, then $E_2^*$ is a  weak focus of order 1;

(ii) If $\eta=\eta_2$, then $E_2^*$ is a  weak focus of order 2 when $0.25<\gamma<0.476347$ ($0.990687<\gamma<6.099848$).
\end{lemma}
\begin{proof}
By substituting $z=z^*$ (i.e. $\kappa_2=0$) into $L_{ii}$ ($i=1,2,3$), we have
\begin{equation}\label{4.12}
\begin{array}{ll}
\begin{aligned}
L_{11}&=\frac{1}{640}\bar{L}_{11},\quad L_{22}=\frac{1}{8192000}\bar{L}_{22},
\end{aligned}
\end{array}
\end{equation}
where
\begin{equation*}
\begin{array}{ll}
\begin{aligned}
\bar{L}_{11}:&=(-36000\gamma+800\sqrt{89}\gamma-117600\gamma^2+18400\sqrt{89}\gamma^2+45600\gamma^3-4000\sqrt{89}\gamma^3)\eta^2\\[2ex]
&\quad+(92600-6360\sqrt{89}+257260\gamma-31100\sqrt{89}\gamma+51080\gamma^2-4840\sqrt{89}\gamma^2\\[2ex]
&\quad-29200\gamma^3+3280\sqrt{89}\gamma^3)\eta-25726+3110\sqrt{89}-98345\gamma+10037\sqrt{89}\gamma\\[2ex]
&\quad+1460\gamma^2-164\sqrt{89}\gamma^2+5474\gamma^3-570\sqrt{89}\gamma^3
\end{aligned}
\end{array}
\end{equation*}
while we omit the expression of $\bar{L}_{22}$, $\bar{L}_{33}$ and $\bar{L}_{44}$ for brevity.
Substituting $z=z^*$ into $0<\eta<8z-20z^2$, we have
\begin{equation*}
\begin{array}{ll}
\begin{aligned}
0<\eta<\eta^*:=\frac{13\sqrt{89}-97}{40}.
\end{aligned}
\end{array}
\end{equation*}
Next, we treat $\bar{L}_{11}$ as a quadratic functions of $\eta$, whose discriminant is
\begin{equation*}
\begin{array}{ll}
\begin{aligned}
\Delta_\eta&=800(15218468-1472340\sqrt{89}+97827460\gamma-10627380\sqrt{89}\gamma+147137761\gamma^2\\[2ex]
&\quad-15539085\sqrt{89}\gamma^2-73344940\gamma^3+7829820\sqrt{89}\gamma^3+8068868\gamma^4-848820\sqrt{89}\gamma^4).
\end{aligned}
\end{array}
\end{equation*}
We have that $\Delta_\eta>0$ for $\gamma>\frac{1}{4}$. Thus, $\bar{L}_{11}$ has two real roots
\begin{equation*}
\begin{array}{ll}
\begin{aligned}
\eta_1&=\frac{5-\sqrt{89}}{40}<0,\quad \eta_2=\frac{-3825-157\sqrt{89}-1424\gamma+720\sqrt{89}\gamma+128\gamma^2+160\gamma^3-32\sqrt{89}\gamma^3}{10(-265\gamma-21\sqrt{89}\gamma+232\gamma^2+72\sqrt{89}\gamma^2+128\gamma^3)}.
\end{aligned}
\end{array}
\end{equation*}
If $0.25<\gamma<0.476347$ or $0.990687<\gamma<6.099848$, then $\eta_2>0$. Therefore, according to the monotonicity of $\bar{L}_{11}$ with respect to $\eta$, we have $\bar{L}_{11}<0$ ($>0$)  when $0<\eta<\eta_2$ ($\eta_2<\eta<\eta^*$), i.e. $E_2^*$ is a stable (an unstable) weak focus of order 1 when $z=z^*$ and $0<\eta<\eta_2$ ($\eta_2<\eta<\eta^*$).

When $\eta=\eta_2$, $\bar{L}_{11}=0$. Substitute $z=z^*$ and $\eta=\eta_2$ into $\bar{L}_{22}$, we have
\begin{equation}\label{4.13}
\begin{array}{ll}
\begin{aligned}
\bar{L}_{22}&=-\frac{128l_{22}}{125\gamma^2(128\gamma^2+72\sqrt{89}\gamma+232\gamma-21\sqrt{89}-265)^6},
\end{aligned}
\end{array}
\end{equation}
where $l_{22}$ are given in Appendix K.

Through analysis, it yields that $\bar{L}_{22}<0(>0)$ when $0.25<\gamma<0.476347(0.990687<\gamma<6.099848)$. Thus, $E_2^*$ is a stable (an unsatble) weak focus of order 2 when $0.25<\gamma<0.476347$ ($0.990687<\gamma<6.099848$).
\end{proof}

$\mathbf{Step\ II:}$ Prove that $V(L_{11},L_{22},L_{33},L_{44},g_1,g_2,g_3,h_3)\cap\Omega^*=\emptyset$.

\begin{lemma}\label{l13}
$V(L_{11},L_{22},L_{33},L_{44},g_1,g_2,g_3,h_3)\cap\Omega^*=\emptyset$.
\end{lemma}
\begin{proof}
We compute the following resultants
\begin{equation*}
\begin{array}{ll}
\begin{aligned}
\mathrm{Res}(h_3,g_1,\gamma)&=256k_0h_{31},\quad\mathrm{Res}(h_3,g_2,\gamma)=262144k_0h_{32},\\[2ex]
\mathrm{Res}(h_3,g_3,\gamma)&=4294967296k_0h_{33},\quad\mathrm{Res}(h_{31},h_{32},z)=\tilde{c}_1,\quad\mathrm{Res}(h_{31},h_{33},z)=\tilde{c}_2,
\end{aligned}
\end{array}
\end{equation*}
where
\begin{equation*}
\begin{array}{ll}
\begin{aligned}
k_0=1+80z+1300z^2+4000z^3,
\end{aligned}
\end{array}
\end{equation*}
$\tilde{c}_1$ and $\tilde{c}_2$ are positive constants. For brevity we omit the detailed expressions of $\tilde{c}_1$, $\tilde{c}_2$, $h_{31}$, $h_{32}$ and $h_{33}$.  $\mathrm{Lcoeff}(h_3,\gamma)=2>0$,  $\mathrm{Lcoeff}(h_{31},z)>0$ and $k_0>0$. Therefore, $V(g_1,g_2,g_3,h_3,h_{31},h_{32},h_{33})\cap\Omega^*=\subseteq V(g_1,g_2,g_3,h_3)\cap\Omega^*=\emptyset$.
\end{proof}

$\mathbf{Step\ III:}$ Show that $V(L_{11},L_{22},L_{33},L_{44},g_1,g_2,g_3,h_i)\cap\Omega^*=\emptyset$ ($i=1,2,4$).

\begin{lemma}\label{l14}
$V(L_{11},L_{22},L_{33},L_{44},g_1,g_2,g_3,h_i)\cap\Omega^*=\emptyset$ ($i=1,2,4$).
\end{lemma}
\begin{proof}
Through calculation, we have
\begin{equation*}
\begin{array}{ll}
\begin{aligned}
r_{21}:&=\mathrm{Res}(h_1,g_1,\gamma)=512R_{21}R_{22},\\[2ex]
r_{22}:&=\mathrm{Res}(h_1,g_2,\gamma)=262144R_{21}^3R_{23},\\[2ex]
r_{23}:&=\mathrm{Res}(h_1,g_3,\gamma)=536870912R_{21}^5R_{24}
\end{aligned}
\end{array}
\end{equation*}
where $R_{21}=100z^2+25z-4$, $R_{22}$, $R_{23}$ and $R_{24}$ are omitted for brevity. Since $\mathrm{Lcoeff}(h_1,\gamma)=8>0$, we have
\begin{equation*}
\begin{array}{ll}
\begin{aligned}
V(L_{11},L_{22},L_{33},L_{44},g_1,g_2,g_3,h_1,R_{21})\cup V(L_{11},L_{22},L_{33},L_{44},g_1,g_2,g_3,h_1,R_{22},R_{23},R_{24}).
\end{aligned}
\end{array}
\end{equation*}
By straightforward calculation, we have $\mathrm{Res}(R_{22},R_{23},z)>0$, $\mathrm{Res}(R_{22},R_{24},z)<0$ and
\begin{equation*}
\begin{array}{ll}
\begin{aligned}
V(L_{11},L_{22},L_{33},L_{44},g_1,g_2,g_3,h_1,R_{22},R_{23},R_{24})=\emptyset,
\end{aligned}
\end{array}
\end{equation*}
which implies $V(L_{11},L_{22},L_{33},L_{44},g_1,g_2,g_3,h_1)=V(L_{11},L_{22},L_{33},L_{44},g_1,g_2,g_3,h_1,R_{21})$.

Furthermore, $h_1=0$ has a root $\gamma\in(0.9,1)$ and $R_{21}=0$ has a root $z\in(0.1,0.2)$, which means when $h_1=0$ and $R_{21}=0$, $V(g_1,g_2,g_3)\neq\emptyset$. Next, we show that $V(L_{11},L_{22},L_{33},L_{44},g_1,g_2,g_3,h_1,R_{21})\cap\Omega^*=\emptyset$. By directly computing, we have
\begin{equation*}
\begin{array}{ll}
\begin{aligned}
r_2(1):&=\mathrm{Res}(R_{21},\mathrm{Res}(h_1,L_{11},\gamma),z)=k_1^3k_2,\\[2ex]
r_2(2):&=\mathrm{Res}(R_{21},\mathrm{Res}(h_1,L_{22},\gamma),z)=k_1^5k_3,\\[2ex]
r_2(3):&=\mathrm{Res}(R_{21},\mathrm{Res}(h_1,L_{33},\gamma),z)=k_1^7k_4,\\[2ex]
r_2(4):&=\mathrm{Res}(R_{21},\mathrm{Res}(h_1,L_{44},\gamma),z)=k_1^9k_5,
\end{aligned}
\end{array}
\end{equation*}
where $k_1=100\eta^2-25\eta-4$, here we omit the detailed expressions of $k_i$ (i=2, 3, 4, 5). It is easy to show that $\mathrm{Res}(k_i,k_j,\eta)\neq0$ ($i=1,2,3,4$, $j=2,3,4,5$, $j>i$). Hence,
\begin{equation*}
\begin{array}{ll}
\begin{aligned}
V(L_{11},L_{22},L_{33},L_{44},g_1,g_2,g_3,h_1,R_{21})\cap\Omega^*\subseteq V(L_{11},L_{22},L_{33},L_{44},g_1,g_2,g_4,h_1,R_{21},k_1)\cap\Omega^*.
\end{aligned}
\end{array}
\end{equation*}
If $h_1=0$, $R_{21}=0$ and $k_1=0$, then $Det(J(E_2^*))=0$, which is contradict with $Det(J(E_2^*))>0$. Hence, $V(L_{11},L_{22},L_{33},L_{44},g_1,g_2,g_3,h_1)\cap\Omega^*=\emptyset$. The proof for $i=2, 4$ is similar to $i=1$, which are omitted here.
\end{proof}

$\mathbf{Step\ IV:}$  Prove that $V(L_{11},L_{22},L_{33},L_{44},L_{55})\cap\Omega^*=\emptyset$.
\begin{lemma}\label{l15}
If $(\gamma,\eta,z)\in\Omega^*$ and \eqref{4.14} hold, then  $V(L_{11},L_{22},L_{33},L_{44},L_{55})\cap\Omega^*=\emptyset$. Moreover,

(i) if $h_1=0$ (or $h_2=0$, or $h_3=0$, or $h_4=0$), then $E_2^*$ is a weak focus of order at most 4;

(ii) if $r_2\neq0$ and $h_i\neq0$ ($i=1, 2, 3, 4$), then $E_2^*$ can be a weak focus of order 5.
\end{lemma}
\begin{proof}
(i) From Lemmas \ref{l12}, \ref{l13} and \ref{l14}, we have $V(L_{11},L_{22},L_{33},L_{44},g_1,g_2,g_3)\cap\Omega^*=\emptyset$ when $h_1=0$ or $h_2=0$ or $h_3=0$ or $h_4=0$, which implies $V_{33}\cap\Omega^*=\emptyset$ and $V_{34}\cap\Omega^*=\emptyset$. Thus, we have $V(L_{11},L_{22},L_{33},L_{44})\cap\Omega^*=\emptyset$ and $E_2^*$ is a weak focus of order at most 4 if $h_1=0$ or $h_2=0$ or $h_3=0$ or $h_4=0$.

(ii) From Lemmas \ref{l12}, \ref{l13} and \ref{l14}, we have that
\begin{equation*}
\begin{array}{ll}
\begin{aligned}
V(L_{11},L_{22},L_{33},L_{44},L_{55},r_1)\cap\Omega^*\subseteq V(L_{11},L_{22},L_{33},L_{44},r_1)\cap\Omega^*,\\[2ex]
V(L_{11},L_{22},L_{33},L_{44},L_{55},r_2)\cap\Omega^*\subseteq V(L_{11},L_{22},L_{33},L_{44},r_2)\cap\Omega^*,\\[2ex]
V(L_{11},L_{22},L_{33},L_{44},L_{55},g_1,g_2,g_3,S_1)\cap\Omega^*\subseteq V(L_{11},L_{22},L_{33},L_{44},g_1,g_2,g_3,S_1)\cap\Omega^*,\\[2ex]
V(L_{11},L_{22},L_{33},L_{44},L_{55},g_1,g_2,g_3,S_2)\cap\Omega^*\subseteq V(L_{11},L_{22},L_{33},L_{44},g_1,g_2,g_3,S_2)\cap\Omega^*.\\[2ex]
\end{aligned}
\end{array}
\end{equation*}
Besides, we have
\begin{equation*}
\begin{array}{ll}
\begin{aligned}
\mathrm{Res}(S_1S_2,S_3S_4,\gamma)\neq0,\quad \mathrm{Res}(S_1S_2,S_5S_6,\gamma)\neq0,
\end{aligned}
\end{array}
\end{equation*}
$V(S_1S_2,S_3S_4)=\emptyset$ and $V(S_1S_2,S_5S_6)=\emptyset$. Form above analysis, we obtain $V(L_{11},L_{22},L_{33},L_{44},L_{55})\cap\Omega^*=\emptyset$. Hence, $E_2^*$ is a weak focus of order at 5 under appropriate parameter conditions.
\end{proof}

From above analysis, we obtain the following theorem for the main results of this section.
\begin{theorem}\label{Hopf}
System \eqref{2.1} may undergo Hopf bifurcation, which can arise up to five limit cycles.
\end{theorem}

\begin{remark}
To illustrate our results more intuitively, let $\alpha=\frac{8}{625}$, $\beta=\frac{19881}{781250}$, $\gamma=\frac{281}{50}$, $\delta=\frac{1}{10}$, and $\eta=\frac{1}{50}$, then $L_{11}\approx0$, $L_{22}\approx0$, $L_{33}\approx0$ and $L_{44}=-\frac{128463}{125000}\approx-1.0277<0$ while $\alpha=\frac{1441}{5000}$, $\beta=\frac{1507}{5000}$ $\gamma=\frac{103}{100}$, $\delta=\frac{1}{10}$, and $\eta=\frac{1}{50}$, we have $L_{11}\approx0$, $L_{22}\approx0$, $L_{33}\approx0$ and $L_{44}=\frac{100291}{100000}\approx1.0029>0$, which means there exist a set of parameters $(\alpha,\beta,\gamma,\delta,\eta)$ such that $L_{11}=L_{22}=L_{33}=L_{44}\approx0$. Hence, we need to further calculate the fifth order focal quantity. Under small parameter perturbations, we obtained that system \eqref{2.1} has five limit cycles, which is exhibited in  Fig. \ref{5cycles}.
\end{remark}

\begin{figure}[ht!]
\centering
\begin{subfigure}{0.45\linewidth}
\centering
\includegraphics[width=0.9\linewidth]{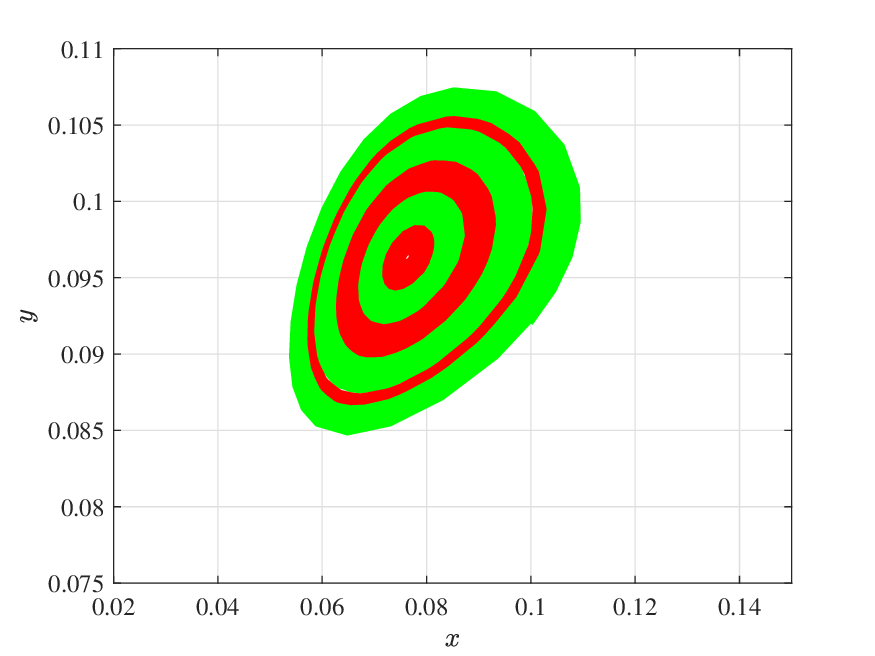}
\put(-101,10){$(a)$}
\end{subfigure}
\centering
\begin{subfigure}{0.45\linewidth}
\centering
\includegraphics[width=0.9\linewidth]{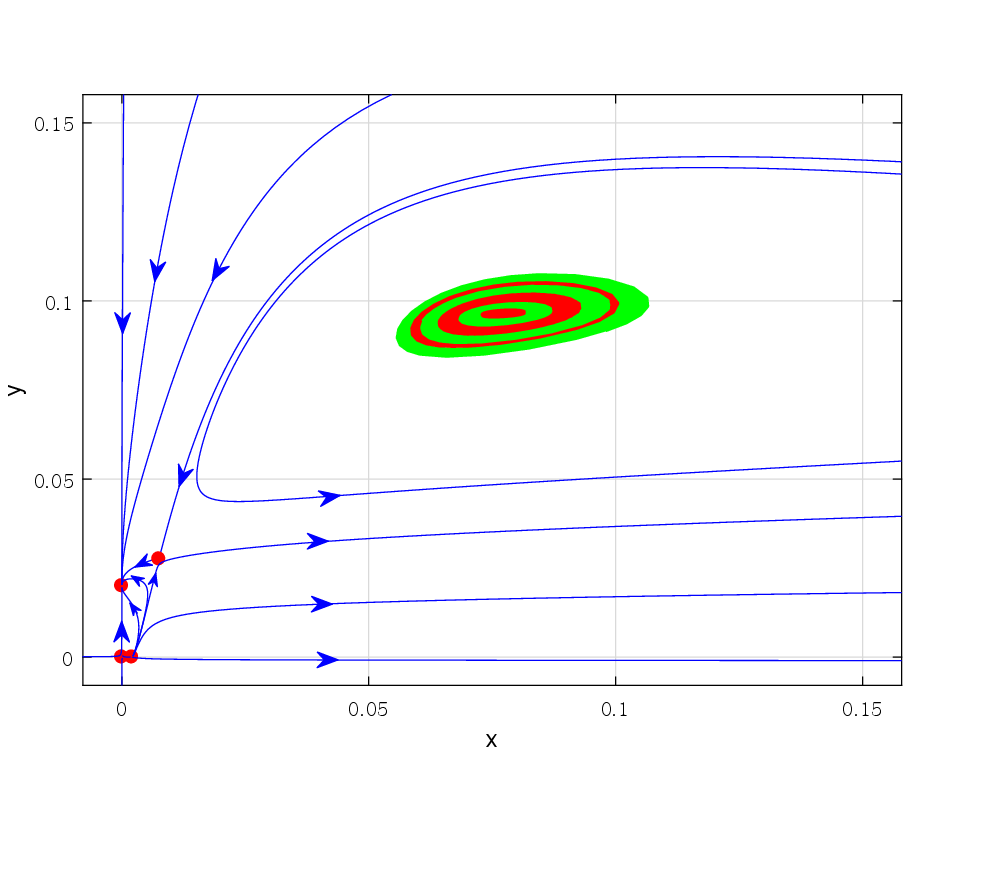}
\put(-90,10){$(b)$}
\end{subfigure}
\captionsetup{justification=centering}
\caption{For $\alpha=\frac{19}{1000}$, $\beta=\frac{21}{1000}$, $\gamma=\frac{73}{10}$, $\delta=\frac{1}{10}$ and $\eta=\frac{1}{50}$, system \eqref{2.1} exists five limit cycles created by Hopf bifurcation around $E_2^*$: (a) five limit cycles near $E_2^*$; (b) the global phase portraits in this case.}
\label{5cycles}
\end{figure}
\begin{remark}
The coexistence of multiple limit cycles shows complicated population oscillations of the system. It contributes to the high dependence of oscillation amplitudes on initial values, because the positions of different limit cycles restrict the amplitudes, which is shown in Fig. \ref{amplitude}.
\end{remark}
\begin{figure}
\begin{center}
\begin{overpic}[scale=0.60]{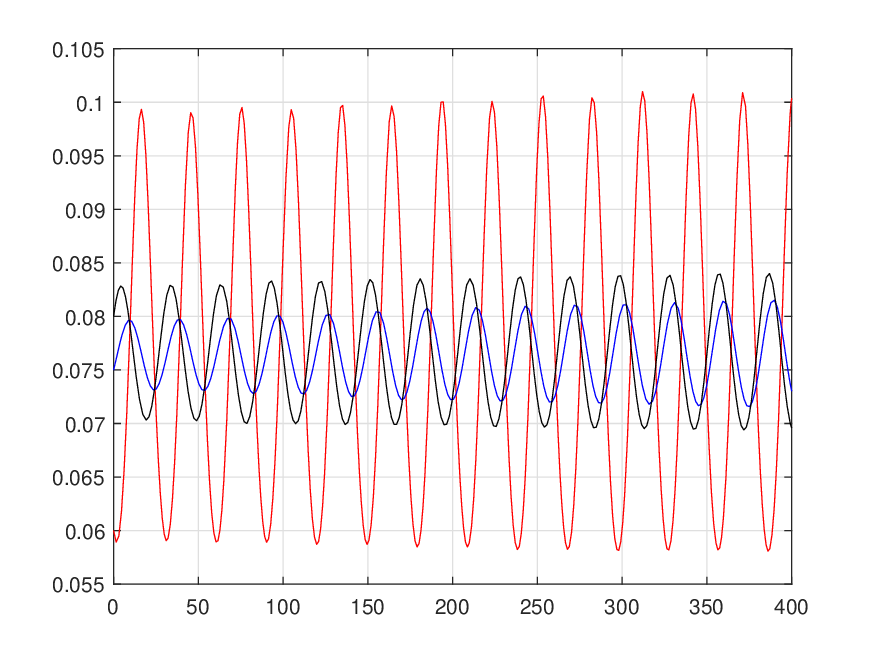}
\end{overpic}
\vspace{-6mm}
\end{center}
\caption{ Different population oscillation amplitudes for initial values in different regions between limit cycles in Fig. \ref{5cycles} }
\label{amplitude}
\end{figure}

\section{Conclusions}

For a modified Leslie-Gower type predator-prey model with Lotka-Volterra type functional response and additive Allee effect in prey, we consider the existence and exact classifications of non-hyperbolic equilibria, and the corresponding high codimension bifurcations. The details are exhibited in Section 2.3 for determining the exact types and codimension of nilpotent equilibria, and in Section 5 to show the center-type equilibrium $E_2^*$ is a weak focus, where system \eqref{2.1} can undergo degenerate Hopf bifurcation and emerge up to 5 limit cycles.

When system \eqref{2.1} exhibits nilpotent cusp bifurcation of codimension 4 and Hopf bifurcation, there exist a sequence of lower codimension bifurcations originating from the nilpotent cusp of codimension 4 and Hopf bifurcation, which can be seen as the organizing centers of the bifurcation set. For the detailed description of focus type and cusp type nilpotent bifurcations of codimension 4, one can refer to literatures \cite{CMJ,MJH,JMC}.
Different from \cite{JMC} and \cite{ZZ}, who found the coexistence of 3 limit cycles, we show that a center-type equilibrium of system \eqref{2.1} is a weak focus with order up to 5, and system \eqref{2.1} can exhibit 5 small-amplitude limit cycles by Hopf bifurcation. This phenomenon is rarely studied in the existing literature of predator-prey systems.

From the results in Section 2, the codimension of the non-hyperbolic positive equilibria depends on not only the parameter values, but also the locations of the equilibria. The bifurcation processes in Sections 3 and 4 show the great changes of dynamical behaviors of the system with
small perturbations of either parameters or initial conditions. It indicates the high sensitivities of the population dynamics on both parameter values and initial densities. Moreover, it is notable that, the additive Allee effects in prey may cause coextinction of both populations. Our results also indicate more complexity for the coexistence and oscillations of both populations in predator-prey system, including the final population sizes, the amplitudes as well as the period of the population oscillations.

The additive Allee effect helps to understand the extinction risk faced by small populations, especially endangered species or early invasive populations. At the same time, it also provides theoretical support for studying the bistable behavior of populations, such as stable equilibrium and extinction states. By quantifying the additive Allee effect, we can help analyze the stability of ecosystems under external disturbances and develop more effective conservation measures, such as artificial intervention to increase population density.

Introducing the additive Allee effect can better describe the dynamic behavior of low-density populations, predict population fate, and provide theoretical basis for ecological conservation and species management. At the same time, it enriches the theoretical framework of ecological models, reveals the complexity of ecosystems, and has important scientific and practical significance.

\section*{Declarations}
\subsection*{Author Contributions} XW conceptualized, validated the numerical results and prepared the original manuscript draft. KW was responsible for the qualitative analysis and contributed to the development and investigation. LZ contributed to the development, manuscript revision and editing. All the authors reviewed the paper.

\subsection*{Data Availability} No applicable to this article, since no datasets were generated or analyzed during the current study.

\subsection*{Conflict of interest} The authors declare that they have no competing interest.

\subsection*{Funding}
This work was supported by the National Natural Science Foundation of China (Nos. 12571541, 12361034), Science and Technology Plan Project of Guizhou Province(ZK[2022]G118) and Postgraduate Research Fund of Guizhou Province(2024YJSKYJJ057).

\begin{appendices}
\section{Proof of Lemma \ref{l2}}\label{secA1}

\begin{proof}
We only consider the equilibrium $E_4\big(\frac{1}{2}(1-\alpha),0\big)$.
Translating the equilibrium $E_4$ to the origin, system \eqref{2.1} can be changed as follows
\begin{equation}
\begin{array}{ll}\label{1.8}
\left\{
\begin{aligned}
\dot{X}&=-\frac{\gamma(1-\alpha)}{2}Y-\gamma XY-\frac{1-\alpha}{1+\alpha}X^2+o(|X,Y|^3),\\[2ex]
\dot{Y}&=\delta Y-\frac{2\delta Y^2}{1+2\eta-\alpha}+o(|X,Y|^3).
\end{aligned}
\right.
\end{array}
\end{equation}
Making the transformation $x=X+\frac{\gamma(1-\alpha)}{2\delta^2}Y$, $y=-\frac{1}{\delta}Y$ and $t=\frac{1}{\delta}\tau$ (still denote $\tau$ by $t$), system \eqref{1.8} can be transformed into
\begin{equation}
\begin{array}{ll}\label{1.10}
\left\{
\begin{aligned}
\dot{x}&=-\frac{1-\alpha}{\delta(1+\alpha)}x^2-\frac{\gamma(1-2\alpha+\alpha^2-\delta-\alpha\delta)}{(1+\alpha)\delta^3}xy\\[2ex]
&\quad+\frac{\gamma(\alpha-1)\big((\alpha-2\eta-1)((1-\alpha)^2-2\delta(1+\alpha))\gamma-4\delta^2(1+\alpha)\big)}{4\delta^5(1+\alpha)(\alpha-2\eta-1)}y^2+o(|x,y|^3),\\[2ex]
\dot{y}&=y+\frac{2}{\delta(1+2\eta-\alpha)}y^2+o(|x,y|^3).
\end{aligned}
\right.
\end{array}
\end{equation}
Note that $-\frac{1-\alpha}{\delta(1+\alpha)}<0$ when $0<\alpha<1$. By Theorem 7.1 in chapter 2 of \cite{ZT}, it yields that $E_4$ is a saddle-node with a stable parabolic sector in the right half plane of $\mathbf{R}^2_+$.
\end{proof}

\section{Proof of Lemma \ref{t4}}\label{secA2}

\begin{proof}
Step 1: Note that $m_{20}=d_{20}\neq0$. Setting $y_1=x+\frac{m_{21}}{3m_{20}}xy+\frac{5m_{21}^2}{54m_{20}}x^4$, $y_2=y+\frac{m_{21}}{3m_{20}}y^2+\frac{m_{21}}{3}x^3+\frac{m_{21}m_{30}}{3m_{20}}x^4+\frac{10m_{21}^2}{27m_{20}}x^3y$, system \eqref{2.8} can be transformed into
\begin{equation}
\begin{array}{ll}\label{2.10}
\left\{
\begin{aligned}
\dot{x}&=y+n_{50}x^5+n_{41}x^4y+o(|x,y|^5),\\[2ex]
\dot{y}&=r_{20}x^2+r_{30}x^3+r_{40}x^4+r_{31}x^3y+r_{50}x^5\\[2ex]
&\quad+r_{41}x^4y+r_{32}x^3y^2+r_{23}x^2y^3+o(|x,y|^5),
\end{aligned}
\right.
\end{array}
\end{equation}
where
\begin{equation*}
\begin{array}{ll}
\begin{aligned}
n_{50}&=-\frac{m_{21}m_{40}}{3m_{20}},\quad n_{41}=\frac{m_{21}(m_{21}m_{30}-m_{20}m_{31})}{3m_{20}^2},\quad r_{31}=m_{31}-\frac{m_{21}m_{30}}{m_{20}},\\[2ex]
r_{20}&=m_{20},\quad r_{30}=m_{30},\quad r_{50}=m_{50}+\frac{4m_{21}^2}{27},\quad r_{41}=m_{41}+\frac{2m_{21}m_{40}}{3m_{20}},\\[2ex]
r_{40}&=m_{40},\quad r_{32}=\frac{m_{21}(3m_{21}m_{30}+2m_{20}m_{31})}{3m_{20}^2},\quad r_{23}=\frac{m_{21}^3}{3m_{20}^2}.
\end{aligned}
\end{array}
\end{equation*}

Step 2: Letting $x=X+\frac{4n_{41}+r_{32}}{20}X^5+\frac{r_{23}}{12}X^4Y$, $y=Y-n_{50}X^5+\frac{r_{32}}{4}X^4Y+\frac{r_{23}}{3}X^3Y^2$, system \eqref{2.10} can be written as
\begin{equation}
\begin{array}{ll}\label{2.11}
\left\{
\begin{aligned}
\dot{X}&=Y+o(|X,Y|^5),\\[2ex]
\dot{Y}&=s_{20}X^2+s_{30}X^3+s_{40}X^4+s_{31}X^3Y+s_{50}X^5+s_{41}X^4Y+o(|X,Y|^5),
\end{aligned}
\right.
\end{array}
\end{equation}
where
\begin{equation*}
\begin{array}{ll}
\begin{aligned}
s_{20}&=r_{20},\quad s_{30}=r_{30},\quad s_{40}=r_{40},\quad s_{31}=r_{31},\quad s_{50}=r_{50},\quad s_{41}=r_{41}+5n_{50}.
\end{aligned}
\end{array}
\end{equation*}

Step 3: Making the following transformation
\begin{equation*}
\begin{array}{ll}
\begin{aligned}
X&=x-\frac{s_{30}}{4s_{20}}x^2+\frac{15s_{30}^2-16s_{20}s_{40}}{80s_{20}^2}x^3-\frac{175s_{30}^3-336s_{20}s_{30}s_{40}+160s_{20}^2s_{50}}{960s_{20}^3}x^4,\\[2ex]
Y&=y,\\[2ex] t&=(1-\frac{s_{30}}{2s_{20}}x+\frac{45s_{30}^2-48s_{20}s_{40}}{80s_{20}^2}x^2-\frac{175s_{30}^3-336s_{20}s_{30}s_{40}+160s_{20}^2s_{50}}{240s_{20}^3}x^3)\tau.
\end{aligned}
\end{array}
\end{equation*}
System \eqref{2.11} can be changed into (still denote $\tau$ by $t$)
\begin{equation}
\begin{array}{ll}\label{2.12}
\left\{
\begin{aligned}
\dot{x}&=y+o(|x,y|^5),\\[2ex]
\dot{y}&=w_{20}x^2+w_{31}x^3y+w_{41}x^4y+o(|x,y|^5),
\end{aligned}
\right.
\end{array}
\end{equation}
where
\begin{equation*}
\begin{array}{ll}
\begin{aligned}
w_{20}=s_{20},\quad w_{31}s_{31},\quad w_{41}=s_{41}-\frac{5s_{30}s_{31}}{4s_{20}}.
\end{aligned}
\end{array}
\end{equation*}

Step 4: From above analysis, we have $w_{20}=d_{20}=-\frac{\gamma^3(1-\gamma\eta)}{2(1+\gamma)(2+3\gamma)}<0$. Setting $x=-X$,$y=-\sqrt{-w_{20}}Y$ and $t=\frac{1}{\sqrt{-w_{20}}}\tau$, system \eqref{2.12} can be written as (still denote $\tau$ by $t$)
\begin{equation}
\begin{array}{ll}\label{2.13}
\left\{
\begin{aligned}
\dot{X}&=Y+o(|X,Y|^5),\\[2ex]
\dot{Y}&=X^2+MX^3Y+NX^4Y+o(|X,Y|^5),
\end{aligned}
\right.
\end{array}
\end{equation}
where
\begin{equation*}
\begin{array}{ll}
\begin{aligned}
M&=-\frac{(1+\gamma)(2+3\gamma)^3}{4\gamma^2(1-\gamma\eta)^3\big((2+\gamma)(1+2\gamma)\eta+\gamma\big)}\sqrt{\frac{\gamma(1-\gamma\eta)}{6\gamma^2+10\gamma+4}}\varrho_1,\\[2ex]
N&=-\frac{(1+\gamma)(2+3\gamma)^4}{32\gamma^3(1-\gamma\eta)^4\big((2+\gamma)(1+2\gamma)\eta+\gamma\big)^2}\sqrt{\frac{\gamma(1-\gamma\eta)}{6\gamma^2+10\gamma+4}}\varrho_2,\\[2ex]
\varrho_1&=4\eta\gamma^3+(19\eta-1)\gamma^2+(20\eta-8)\gamma+4(\eta-2),\\[2ex]
\varrho_2&=24\eta^2\gamma^6+(102\eta^2-18\eta)\gamma^5+(71\eta^2-118\eta+3)\gamma^4-(156\eta^2+158\eta-46)\gamma^3\\[2ex]
&\quad-(248\eta^2-8\eta-88)\gamma^2-(112\eta^2-104\eta-58)\gamma+16\eta(2-\eta).
\end{aligned}
\end{array}
\end{equation*}

If $0<\gamma<\frac{1}{\eta}$ and $\eta\neq\frac{\gamma^2+8\gamma+8}{4\gamma^3+19\gamma^2+20\gamma+4}$, i.e., $M\neq0$, then $E^*$ is a cusp of  codimension 3 of system \eqref{2.1}. If $\eta=\eta_0=\frac{\gamma^2+8\gamma+8}{4\gamma^3+19\gamma^2+20\gamma+4}$, i.e.,$M=0$ and $N\neq0$, then $E^*$ is a cusp of codimension 4.
\end{proof}

\section{The coefficients for system \eqref{2.4}}\label{secA3}

\begin{equation*}
\begin{array}{ll}
\begin{aligned}
a_{02}&=\frac{4(1+\gamma)^2}{\gamma(1-\alpha-\alpha\gamma+2\eta+\gamma\eta)(\gamma\eta+\alpha\gamma+\alpha-1)},\quad a_{12}=\frac{8(1+\gamma)^3}{\gamma(1-\alpha-\alpha\gamma+2\eta+\gamma\eta)^2(1-\alpha-\alpha\gamma-\gamma\eta)},\\[2ex]
a_{03}&=\frac{16(1+\gamma)^4}{\gamma^2(1-\alpha-\alpha\gamma+2\eta+\gamma\eta)^2(\gamma\eta+\alpha\gamma+\alpha-1)^2},\quad a_{22}=\frac{16(1+\gamma)^4}{\gamma(1-\alpha-\alpha\gamma+2\eta+\gamma\eta)^3(\gamma\eta+\alpha\gamma+\alpha-1)},\\[2ex]
a_{13}&=\frac{64(1+\gamma)^5}{\gamma^2(\alpha+\alpha\gamma-2\eta-\gamma\eta-1)^3(\gamma\eta+\alpha\gamma+\alpha-1)^2},\quad a_{04}=\frac{64(1+\gamma)^6}{\gamma^3(1-\alpha-\alpha\gamma+2\eta+\gamma\eta)^3(\gamma\eta+\alpha\gamma+\alpha-1)^3},\\[2ex]
a_{14}&=\frac{384(1+\gamma)^7}{\gamma^3(1-\alpha-\alpha\gamma+2\eta+\gamma\eta)^4(1-\alpha-\alpha\gamma-\gamma\eta)^3},\quad
a_{23}=\frac{192(1+\gamma)^6}{\gamma^2(1-\alpha-\alpha\gamma+2\eta+\gamma\eta)^4(1-\alpha-\alpha\gamma-\gamma\eta)^2},\\[2ex]
a_{32}&=\frac{32(1+\gamma)^5}{\gamma(1-\alpha-\alpha\gamma+2\eta+\gamma\eta)^4(1-\alpha-\alpha\gamma-\gamma\eta)},\quad a_{05}=\frac{256(1+\gamma)^8}{\gamma^4(1-\alpha-\alpha\gamma+2\eta+\gamma\eta)^4(1-\alpha-\alpha\gamma-\gamma\eta)^4},\\[2ex]
b_{20}&=-\frac{\gamma(1-\alpha-\alpha\gamma-\gamma\eta)^2}{2(1+\alpha+\alpha\gamma-\gamma\eta)},\quad b_{11}=\frac{\alpha(1+\gamma)(2+3\gamma)+(2+\gamma)(\gamma\eta-1)}{1+\alpha+\alpha\gamma-\gamma\eta},\quad b_{21}=\frac{12\alpha(1+\gamma)^3}{(1+\alpha+\alpha\gamma-\gamma\eta)^2},\\[2ex]
b_{02}&=2(1+\gamma)\bigg(\frac{1}{1-\alpha(1+\gamma)+(2+\gamma)\eta}-\frac{1}{\alpha+\alpha\gamma+\gamma\eta-1}-\frac{1+\gamma}{\gamma(1+\alpha+\alpha\gamma-\alpha\gamma\eta)}\bigg),\\[2ex]
b_{12}&=\bigg(\frac{12(1+\gamma)^2}{(1+\alpha+\alpha\gamma-\gamma\eta)^2}+\frac{12(1+\gamma)^2\big(\gamma+\alpha(1+\gamma)(2+\gamma)-\gamma^2\eta\big)}
{\gamma(1+\alpha+\alpha\gamma-\gamma\eta)^2(\alpha+\alpha\gamma+\gamma\eta-1)}-\frac{4(1+\gamma)^2}{\big(\alpha\gamma+\alpha-1-\eta(2+\gamma)\big)^2}\bigg),\\[2ex]
b_{03}&=\frac{8(1+\gamma)^3}{\gamma^2(1+\alpha+\alpha\gamma-\gamma\eta)^2(\alpha+\alpha\gamma+\gamma\eta-1)^2(\alpha+\alpha\gamma-1-2\eta-\gamma\eta)^2}\big((\gamma\eta-1)^3\gamma-\alpha^3(1+\gamma)^3\\[2ex]
&\quad+\alpha^2(1+\gamma)^2(4+5\gamma+8\eta+12\gamma\eta+3\gamma^2\eta)-\alpha(1+\gamma)(2+3\gamma+8\eta+12\gamma\eta+2\gamma^2\eta+8\eta^2\\[2ex]
&\quad+16\gamma\eta^2+10\gamma^2\eta^2+3\gamma^3\eta^2)\big),\quad b_{40}=\frac{4\alpha\gamma(1+\gamma)^3(1-\alpha-\alpha\gamma-\gamma\eta)}{(1+\alpha+\alpha\gamma-\gamma\eta)^3},\quad b_{31}=\frac{32\alpha(1+\gamma)^4}{(1+\alpha+\alpha\gamma-\gamma\eta)^3},\\[2ex]
b_{13}&=\frac{32(1+\gamma)^4}{\gamma^2(1+\alpha+\alpha\gamma-\gamma\eta)^3(\alpha+\alpha\gamma+\gamma\eta-1)^2(\alpha+\alpha\gamma-1-2\eta-\gamma\eta)^3}\big(\alpha^4(1+\gamma)^4(4+5\gamma)-\gamma(\gamma\eta-1)^4\\[2ex]
&\quad+12(1+\gamma)^3(\alpha+2\alpha\eta+\alpha\gamma\eta)^2-2\alpha^3(1+\gamma)^3(6+5\gamma+12\eta+18\gamma\eta+7\gamma^2\eta)-2\alpha(1+\gamma)(2+3\gamma+12\eta\\[2ex]
&\quad+18\gamma\eta+3\gamma^2\eta+24\eta^2+48\gamma\eta^2+30\gamma^2\eta^2+9\gamma^3\eta^2+16\eta^3+40\gamma\eta^3+36\gamma^2\eta^3+14\gamma^3\eta^3+\gamma^4\eta^3)\big),\\[2ex]
b_{50}&=\frac{8\alpha\gamma(1+\gamma)^4(\alpha+\alpha\gamma+\gamma\eta-1)}{(1+\alpha+\alpha\gamma-\gamma\eta)^4},\quad b_{41}=-\frac{80\alpha(1+\gamma)^5}{(1+\alpha+\alpha\gamma-\gamma\eta)^4},\quad b_{30}=\frac{2\alpha\gamma(1+\gamma)^2(\alpha+\alpha\gamma+\gamma\eta-1)}{(1+\alpha+\alpha\gamma-\gamma\eta)^2},\\[2ex]
b_{04}&=\frac{32(1+\gamma)^5}{\gamma^3(1+\alpha+\alpha\gamma-\gamma\eta)^3(1-\alpha-\alpha\gamma-\gamma\eta)^3(\alpha+\alpha\gamma-1-2\eta-\gamma\eta)^3}\big(\alpha(1+\gamma)^4(2+3\gamma)-\gamma(\gamma\eta-1)^4\\[2ex]
&\quad+6(1+\gamma)^3(\alpha+2\alpha\eta+\alpha\gamma\eta)^2-2\alpha^3(1+\gamma)^3(3+2\gamma+6\eta+9\gamma\eta+4\gamma^2\eta)-2\alpha(1+\gamma)(1+2\gamma+6\eta\\[2ex]
&\quad+9\gamma\eta+12\eta^\eta2+24\gamma\eta^2+15\gamma^2\eta^2+6\gamma^3\eta^2+8\eta^3+20\gamma\eta^3+18\gamma^2\eta^3+7\gamma^3\eta^3)\big),\\[2ex]
\end{aligned}
\end{array}
\end{equation*}
\begin{equation*}
\begin{array}{ll}
\begin{aligned}
b_{14}&=\frac{64(1+\gamma)^6}{\gamma^3(1+\alpha+\alpha\gamma-\gamma\eta)^4(\alpha+\alpha\gamma-1-2\eta-\gamma\eta)^4(\alpha+\alpha\gamma+\gamma\eta-1)^3}\big(\alpha^5(1+\gamma)^5-3\gamma(\gamma\eta-1)^5\\[2ex]
&\quad-\alpha^4(1+\gamma)^4(40+49\gamma+80\eta+120\gamma\eta+31\gamma^2\eta)+6\alpha^3(1+\gamma)^3(10+9\gamma+(40+60\gamma+22\gamma^2)\eta\\[2ex]
&\quad+(40+80\gamma+50\gamma^2+9\gamma^3)\eta^2)+2\alpha^2(1+\gamma)^2(23\gamma^4\eta^3+20(1+2\eta)^3+\gamma^3\eta^2(51+140\eta)\\[2ex]
&\quad+3\gamma^2\eta(23+20\eta(5+6\eta))+\gamma(17+20\eta(9+4\eta(6+5\eta))))+\alpha(1+\gamma)(19\gamma^5\eta^4+10(1+2\eta)^4\\[2ex]
&\quad+2\gamma^4\eta^3(2+45\eta)+\gamma^3\eta^2(57+20\eta(7+8\eta))+4\gamma^2\eta(1+5\eta(15+4\eta(9+7\eta)))\\[2ex]
&\quad+\gamma(19+40\eta(3+4\eta(3+\eta(5+3\eta)))))\big),\\[2ex]
b_{32}&=\frac{16(1+\gamma)^4}{\gamma(1+\alpha+\alpha\gamma-\gamma\eta)^4(\alpha+\alpha\gamma-2\eta-\gamma\eta-1)^4(\alpha+\alpha\gamma+\gamma\eta-1)}\big(\alpha^5(1+\gamma)^5(20+19\gamma)\\[2ex]
&\quad-\gamma(\gamma\eta-1)^5-\alpha^4(1+\gamma)^4(80+83\gamma+(160+\gamma(240+77\gamma))\eta)+2\alpha^3(1+\gamma)^3(60+59\gamma\\[2ex]
&\quad+2(120+\gamma(180+61\gamma))\eta+(240+\gamma(480+\gamma(300+59\gamma)))\eta^2)-2\alpha^2(1+\gamma)^2(41\gamma^4\eta^3\\[2ex]
&\quad+40(1+2\eta)^3+\gamma^3\eta^2(117+280\eta)+3\gamma^2\eta(41+40\eta(5+6\eta))+\gamma(39+40\eta(9+4\eta(6+5\eta))))\\[2ex]
&\quad+\alpha(1+\gamma)(23\gamma^5\eta^4+20(1+2\eta)^4+4\gamma^4\eta^3(17+45\eta)+2\gamma^3\eta^2(69+40\eta(7+8\eta))\\[2ex]
&\quad+4\gamma^2\eta(17+10\eta(15+4\eta(9+7\eta)))+\gamma(23+80\eta(3+4\eta(3+\eta(5+3\eta)))))\big),\\[2ex]
b_{23}&=\frac{-32(1+\gamma)^5}{\gamma^2(1+\alpha+\alpha\gamma-\gamma\eta)^4(\alpha+\alpha\gamma-1-2\eta-\gamma\eta)^4(\alpha+\alpha\gamma+\gamma\eta-1)^2}\big(\alpha^5(1+\gamma)^5(20+17\gamma)\\[2ex]
&\quad-3\gamma(-1+\gamma\eta)^5-\alpha^4(1+\gamma)^4(80+89\gamma+(160+\gamma(240+71\gamma))\eta)+6\alpha^3(1+\gamma)^3(20+19\gamma\\[2ex]
&\quad+80\eta+6\gamma(20+7\gamma)\eta+(80+\gamma(160+\gamma(100 +19\gamma)))\eta^2)-2\alpha^2(1+\gamma)^2(43\gamma^4\eta^3\\[2ex]
&\quad+40(1+2\eta)^3 +\gamma^3\eta^2(111+280\eta)+3\gamma^2\eta(43+40\eta(5+6\eta))+\gamma(37+40\eta(9+4\eta(6\\[2ex]
&\quad+5\eta))))+\alpha(1+\gamma)(29\gamma^5\eta^4+20(1+2\eta)^4+4\gamma^4\eta^3(11+45\eta)+2\gamma^3\eta^2(87+40\eta(7+8\eta))\\[2ex]
&\quad+4\gamma^2\eta(11+10\eta(15+4\eta(9+7\eta)))+\gamma(29+80\eta(3+4\eta(3+\eta(5+3\eta)))))\big),\\[2ex]
b_{05}&=\frac{-128(1+\gamma)^7}{\gamma^4(1+\alpha+\alpha\gamma-\gamma\eta)^4(\alpha+\alpha\gamma-1-2\eta-\gamma\eta)^4(\alpha+\alpha\gamma+\gamma\eta-1)^4}\big(\alpha^5(1+\gamma)^5(2+\gamma)\\[2ex]
&\quad-\gamma(\gamma\eta-1)^5-\alpha^4(1+\gamma)^4(8+11\gamma+(4+\gamma)(4+5\gamma)\eta)+2\alpha^3(1+\gamma)^3(6+5\gamma\\[2ex]
&\quad+2(12+\gamma(18+7\gamma))\eta+(24+\gamma(48+5\gamma(6+\gamma)))\eta^2)-2\alpha^2(1+\gamma)^2(4+3\gamma\\[2ex]
&\quad+3(8+\gamma(12+5\gamma))\eta+3(16+\gamma(4+\gamma)(8+3\gamma))\eta^2+(32+\gamma(80+\gamma(72+\gamma(28+5\gamma))))\eta^3)\\[2ex]
&\quad+\alpha(1 +\gamma)(5\gamma^5\eta^4+2(1+2\eta)^4+2\gamma^4\eta^3(-2+9\eta)+2\gamma^3\eta^2(15+4\eta(7+8\eta))\\[2ex]
&\quad+4\gamma^2\eta(-1+\eta(15+4\eta(9+7\eta)))+\gamma(5+8\eta(3+4\eta(3+\eta(5+3\eta)))))\big).
\end{aligned}
\end{array}
\end{equation*}

\section{The coefficients for system \eqref{2.5}}

\begin{equation*}
\begin{array}{ll}
\begin{aligned}
c_{30}&=-a_{02}b_{20},\quad c_{21}=-a_{02}b_{11},\quad c_{50}=-\frac{1}{4}a_{02}\big(b_{02}(5b_{02}b_{20}-2b_{30})+4b_{40}\big),\\[2ex]
c_{40}&=a_{02}(b_{02}b_{20}-b_{30}),\quad c_{31}=a_{02}(\frac{b_{02}b_{11}}{2}-b_{21}-a_{02}b_{20}),\quad c_{04}=a_{04}-a_{02}a_{03},\\[2ex]
c_{22}&=\frac{3a_{12}b_{02}}{2}+a_{22}-a_{02}(3b_{02}^2+b_{12}),\quad c_{13}=a_{13}-a_{02}b_{03}-2b_{02}(a_{02}^2-a_{03}),\\[2ex]
c_{12}&=a_{12}+2a_{02}b_{02},\quad c_{32}=\frac{1}{2}b_{02}(a_{12}b_{02}-3a_{02}^2b_{11}-a_{02}b_{12})+2b_{02}(a_{22}+2a_{02}b_{02}^2)\\[2ex]
&\quad-a_{02}(a_{02}b_{21}+b_{22})+a_{32},\quad c_{05}=a_{05}+a_{02}(a_{02}a_{03}-a_{04}),\quad c_{03}=a_{03},\\[2ex]
c_{23}&=\frac{1}{2}b_{02}(3a_{02}a_{12}+5a_{13})+b_{02}^2(7a_{02}^2+a_{03})+a_{02}(a_{22}-b_{02}b_{03}-b_{13})+a_{23},\\[2ex]
c_{41}&=-\frac{1}{2}a_{02}(b_{02}^2b_{11}+4a_{02}b_{30}+2b_{31}),\quad c_{14}=a_{14}+a_{02}^2b_{03}-a_{02}b_{04}+b_{02}(2a_{02}^3+3a_{04}),\\[2ex]
\end{aligned}
\end{array}
\end{equation*}
and
\begin{equation*}
\begin{array}{ll}
\begin{aligned}
d_{20}&=b_{20},\quad d_{11}=b_{11},\quad d_{30}=b_{30},\quad d_{21}=\frac{b_{02}b_{11}}{2}+2a_{02}b_{20}+b_{21},\\[2ex]
d_{12}&=2b_{02}^2+a_{02}b_{11}+b_{12},\quad d_{03}=b_{03},\quad d_{40}=\frac{1}{4}b_{02}(b_{02}b_{20}+2b_{30})+b_{40},\\[2ex]
d_{31}&=b_{31}+b_{02}b_{21}+3a_{02}b_{30},\quad d_{13}=a_{02}(b_{12}-2b_{02}^2)-b_{02}(a_{12}-3b_{03})+b_{13},\\[2ex]
d_{22}&=-\frac{1}{2}b_{02}(2b_{02}^2-2a_{02}b_{11}-3b_{12})+a_{02}(a_{02}b_{20}+2b_{21})+b_{22},\\[2ex]
d_{50}&=-{1}{4}b_{02}^2(b_{02}b_{20}-b_{30})+b_{02}b_{40}+b_{50},\quad d_{05}=b_{05}+b_{02}(a_{02}a_{03}-a_{04}),\\[2ex]
d_{32}&=b_{02}^4-\frac{1}{2}b_{02}^2(3a_{02}b_{11}-b_{12})+2b_{02}(a_{02}b_{21}+b_{22})+3a_{02}(a_{02}b_{30}+b_{31})+b_{32},\\[2ex]
d_{41}&=\frac{1}{4}b_{02}(b_{02}b_{21}+6b_{31})-a_{02}\big(b_{02}(b_{02}b_{20}-b_{30})-4b_{40}\big)+b_{41},\\[2ex]
d_{14}&=b_{02}^2(2a_{02}^2-a_{03})-b_{02}(a_{13}-3b_{04})+a_{02}(b_{02}b_{03}+b_{13})+b_{14},\\[2ex]
d_{23}&=-\frac{1}{2}b_{02}(a_{12}b_{02}-5b_{13})+b_{02}^2(5a_{02}b_{02}+b_{03})-b_{02}(a_{22}-2a_{02}b_{12})\\[2ex]
&\quad+a_{02}(a_{02}b_{21}+2b_{22})+b_{23},\quad d_{04}=b_{04}-a_{03}b_{02}.
\end{aligned}
\end{array}
\end{equation*}

\section{The coefficients $e_{ij}$ and $h_{ij}$ for system \eqref{2.6}}

\begin{equation*}
\begin{array}{ll}
\begin{aligned}
e_{40}&=c_{40}-\frac{1}{4}d_{20}(c_{12}+d_{03}),\quad e_{31}=c_{31}-2c_{03}d_{20}-\frac{1}{2}d_{11}(c_{12}+d_{03}),\\[2ex]
e_{22}&=c_{22}-2c_{03}d_{11},\quad e_{13}=c_{13},\quad e_{04}=c_{04},\quad e_{14}=c_{14}+c_{03}(c_{12}+3d_{03}),\\[2ex]
\end{aligned}
\end{array}
\end{equation*}
\begin{equation*}
\begin{array}{ll}
\begin{aligned}
e_{50}&=\frac{1}{2}\big(2c_{50}+c_{30}d_{12}-d_{30}(c_{12}+d_{03})\big),\quad e_{05}=c_{05},\\[2ex]
e_{41}&=\frac{1}{6}\big(c_{21}(4c_{21}+5d_{12})-3d_{21}(c_{12}+d_{03})\big)-2(c_{12}c_{30}+c_{03}d_{30})+c_{41},\\[2ex]
e_{32}&=\frac{1}{6}c_{12}(8c_{21}+7d_{12})-2c_{03}(3c_{30}+d_{21})+2c_{21}d_{03}+c_{32},\\[2ex]
e_{23}&=\frac{1}{2}\big(c_{12}(c_{12}+5d_{03})+c_{03}(4c_{21}+3d_{12})+2c_{23}\big)
\end{aligned}
\end{array}
\end{equation*}
and
\begin{equation*}
\begin{array}{ll}
\begin{aligned}
h_{20}&=d_{20},\quad h_{11}=d_{11},\quad h_{30}=d_{30},\quad h_{21}=d_{21}+3c_{30},\quad h_{13}=d_{13}+c_{03}d_{11},\\[2ex]
h_{40}&=\frac{1}{6}d_{20}(4c_{21}-d_{12})-c_{30}d_{11}+d_{40},\quad h_{22}=\frac{1}{2}d_{11}(c_{12}-d_{03})+2c_{03}d_{20}+d_{22},\\[2ex]
h_{31}&=\frac{1}{6}d_{11}(2c_{21}+d_{12})+d_{20}(c_{12}-d_{03})+d_{31},\quad h_{50}=d_{50}-c_{30}d_{21}+c_{21}d_{30},\\[2ex]
h_{04}&=d_{04},\quad h_{32}=\frac{1}{6}d_{12}(2c_{21}+7d_{12})+c_{12}d_{21}+3(c_{03}d_{30}-3c_{30}d_{03})+d_{32},\\[2ex]
h_{23}&=\frac{d_{12}}{2}(c_{12}+8d_{03})+2c_{03}d_{21}+d_{23},\quad h_{14}=3d_{03}^2+c_{03}d_{12}+d_{14},\quad h_{05}=d_{05},\\[2ex]
h_{41}&=\frac{1}{6}\big(2d_{21}(2c_{21}+d_{12})+3d_{30}(3c_{12}-d_{03})-21c_{30}d_{12}\big)+d_{41}.
\end{aligned}
\end{array}
\end{equation*}

\section{The coefficients $\bar{a}_{ij}$ and $\bar{b}_{ij}$ for system \eqref{2.17}}

\begin{equation*}
\begin{array}{ll}
\begin{aligned}
\bar{a}_{00}&=-\frac{(2+\gamma)\chi_9}{\chi_3\chi_8},\quad \bar{a}_{01}=-\frac{\gamma^2(2+\gamma)}{\chi_3},\quad \bar{a}_{20}=-1+\frac{(\chi_8-2\gamma-\gamma^2)\chi_{10}}{\chi_8^3},\quad \bar{a}_{11}=-\gamma,\\[2ex] \bar{a}_{10}&=1-\frac{2\gamma(2+\gamma)}{\chi_3}-\frac{2\gamma(1+\gamma)(4+\gamma)}{\chi_3}+\bigg(\mu_1+\frac{4(1+\gamma)^3(2+\gamma)^2}{\chi_3^2}\bigg)\frac{(\gamma^2+2\gamma-\chi_8)\chi_3-\chi_8}{\chi_8^2},\\[2ex] \bar{a}_{30}&=\bigg(\frac{4(1+\gamma)^3(2+\gamma)^2}{\chi_3^2}+\lambda_1\bigg)\frac{(\gamma^2+2\gamma-\chi_8)\chi_3^3}{\chi_8^4},\quad \bar{b}_{00}=\frac{2(1+\gamma)(4+\gamma)\chi_2\lambda_4}{\chi_1\chi_3},\\[2ex]
\bar{a}_{40}&=-\bigg(\frac{4(1+\gamma)^3(2+\gamma)^2}{\chi_3^2}+\lambda_1\bigg)\frac{(\gamma^2+2\gamma-\chi_8)\chi_3^4}{\chi_8^5},\quad \bar{b}_{01}=-\frac{\chi_2\chi_4}{\chi_1\chi_3},\quad \bar{b}_{20}=-\frac{\chi_2\chi_5}{\chi_1^3},\\[2ex]
\bar{a}_{50}&=\bigg(\frac{4(1+\gamma)^3(2+\gamma)^2}{\chi_3^2}+\lambda_1\bigg)\frac{(\gamma^2+2\gamma-\chi_8)\chi_3^5}{\chi_8^6},\quad \bar{b}_{02}=-\frac{\chi_2}{\chi_1},\quad \bar{b}_{21}=-\frac{\chi_2\chi_6}{\chi_1^3},\\[2ex]
\bar{b}_{10}&=\frac{4(1+\gamma)^2(4+\gamma)^2(\chi_3\lambda_3+\gamma^2(2+\gamma))}{\chi_1^2\chi_3},\quad \bar{b}_{11}=\frac{4(1+\gamma)(4+\gamma)\big(\gamma^2(2+\gamma)+\chi_3\lambda_3\big)}{\chi_1^2},\\[2ex]
\bar{b}_{30}&=\frac{4(1+\gamma)^2(4+\gamma)^2\big(\gamma^2(2+\gamma)+\chi_3\lambda_3\big)\chi_3}{\chi_1^4},\quad \bar{b}_{12}=\frac{\big(\gamma^2(2+\gamma)+\chi_3\lambda_3\big)\chi_3}{\chi_1^2},\\[2ex]
\bar{b}_{40}&=-\frac{4(1+\gamma)^2(4+\gamma)^2\big(\gamma^2(2+\gamma)+\chi_3\lambda_3\big)\chi_3^2}{\chi_1^5},\quad \bar{b}_{31}=\frac{4(1+\gamma)(4+\gamma)\big(\gamma^2(2+\gamma)+\chi_3\lambda_3\big)\chi_3^2}{\chi_1^4},\\[2ex]
\end{aligned}
\end{array}
\end{equation*}
\begin{equation*}
\begin{array}{ll}
\begin{aligned}
\bar{b}_{50}&=\frac{4(1+\gamma)^2(4+\gamma)^2\big(\gamma^2(2+\gamma)+\chi_3\lambda_3\big)\chi_3^3}{\chi_1^6},\quad \bar{b}_{32}=\frac{\big(\gamma^2(2+\gamma)+\chi_3\lambda_3\big)\chi_3^3}{\chi_1^4},\\[2ex]
\bar{b}_{41}&=-\frac{4(1+\gamma)(4+\gamma)\big(\gamma^2(2+\gamma)+\chi_3\lambda_3\big)\chi_3^3}{\chi_1^5},\quad \bar{b}_{22}=-\frac{\chi_2\chi_7}{\chi_1^3}.
\end{aligned}
\end{array}
\end{equation*}
and
\begin{equation*}
\begin{array}{ll}
\begin{aligned}
\chi_1&=2(1+\gamma)(4+\gamma)+\chi_3\lambda_4,\quad\chi_3=4\gamma^3+19\gamma^2+20\gamma+4,\quad\chi_{10}=4(1+\gamma)^3(2+\gamma)^2+\chi_7\lambda_1\\[2ex]
\chi_5&=4\gamma^4+40\gamma^3+132\gamma^2+160\gamma+64,\quad\chi_6=16\gamma^5+156\gamma^4+524\gamma^3+720\gamma^2+400\gamma+64,\\[2ex]
\chi_7&=16\gamma^6+152\gamma^5+521\gamma^4+792\gamma^3+552\gamma^2+160\gamma+16,\quad\chi_4=2(1+\gamma)(4+\gamma)-\chi_3\lambda_4,\\[2ex]
\chi_8&=2\gamma^2+6\gamma+4+\chi_3\lambda_2,\quad\chi_9=\gamma\chi_3\lambda_1-2\gamma(1+\gamma)^2(2+\gamma)\lambda_2,\quad\chi_2=\gamma^2(2+\gamma)+\chi_3\lambda_3.
\end{aligned}
\end{array}
\end{equation*}

\section{The coefficients $\bar{c}_{ij}$ for system \eqref{2.18}}

\begin{equation*}
\begin{array}{ll}
\begin{aligned}
\bar{c}_{00}&=\bar{a}_{01}\bar{b}_{00}-\bar{a}_{00}\bar{b}_{01}+\frac{\bar{a}_{00}^2\bar{b}_{02}}{\bar{a}_{01}},\quad \bar{c}_{01}=\bar{a}_{10}+\bar{b}_{01}-\frac{\bar{a}_{00}(\bar{a}_{11}+2\bar{b}_{02})}{\bar{a}_{01}},\\[2ex]
\bar{c}_{10}&=\bar{a}_{11}\bar{b}_{00}-\bar{a}_{10}\bar{b}_{01}+\bar{a}_{01}\bar{b}_{10}-\bar{a}_{00}\bar{b}_{11}+\frac{\bar{a}_{00}}{\bar{a}_{01}}(2\bar{a}_{10}\bar{b}_{02}+\bar{a}_{00}\bar{b}_{12})-\frac{\bar{a}_{00}^2\bar{a}_{11}\bar{b}_{02}}{\bar{a}_{01}^2},\\[2ex]
\bar{c}_{20}&=\bar{a}_{11}\bar{b}_{10}+\bar{a}_{01}\bar{b}_{20}-\bar{a}_{20}\bar{b}_{01}-\bar{a}_{10}\bar{b}_{11}-\bar{a}_{00}\bar{b}_{21}+\frac{\bar{a}_{00}\bar{a}_{11}(2\bar{a}_{10}\bar{b}_{02}+\bar{a}_{00}\bar{b}_{12})}{\bar{a}_{01}^2}\\[2ex]
&\quad+\frac{\big(\bar{b}_{02}(\bar{a}_{10}^2+2\bar{a}_{00}\bar{a}_{20})+\bar{a}_{00}(2\bar{a}_{10}\bar{b}_{12}+\bar{a}_{00}\bar{b}_{22})\big)}{\bar{a}_{01}}+\frac{\bar{a}_{00}^2\bar{a}_{11}^2\bar{b}_{02}}{\bar{a}_{01}^3},\\[2ex]
\bar{c}_{11}&=2\bar{a}_{20}+\bar{b}_{11}-\frac{\bar{a}_{10}(\bar{a}_{11}+2\bar{b}_{02})+2\bar{a}_{00}\bar{b}_{12}}{\bar{a}_{01}}+\frac{\bar{a}_{00}\bar{a}_{11}(\bar{a}_{11}+2\bar{b}_{02})}{\bar{a}_{01}^2},\\[2ex]
\bar{c}_{02}&=\frac{\bar{a}_{11}+\bar{b}_{02}}{\bar{a}_{01}},\quad \bar{c}_{32}=\frac{\bar{b}_{32}}{\bar{a}_{01}}-\frac{\bar{a}_{11}\bar{b}_{22}}{\bar{a}_{01}^2}+\frac{\bar{a}_{11}^2\bar{b}_{12}}{\bar{a}_{01}^3}-\frac{\bar{a}_{11}^3(\bar{a}_{11}+\bar{b}_{02})}{\bar{a}_{01}^4},\\[2ex]
\bar{c}_{30}&=\bar{a}_{11}\bar{b}_{20}+\bar{a}_{01}\bar{b}_{30}-\bar{a}_{30}\bar{b}_{01}-\bar{a}_{20}\bar{b}_{11}-\bar{a}_{10}\bar{b}_{21}-\bar{a}_{00}\bar{b}_{31}-\frac{\bar{a}_{00}^2\bar{a}_{11}^3\bar{b}_{02}}{\bar{a}_{01}^4}\\[2ex]
&\quad+\frac{2\bar{b}_{02}(\bar{a}_{10}\bar{a}_{20}+\bar{a}_{00}\bar{a}_{30})+\bar{b}_{12}(\bar{a}_{10}^2+2\bar{a}_{00}\bar{a}_{20})+\bar{a}_{00}(2\bar{a}_{10}\bar{b}_{22}+\bar{a}_{00}\bar{b}_{32})}{\bar{a}_{01}}\\[2ex]
&\quad+\frac{\bar{a}_{11}\bar{a}_{10}(\bar{a}_{10}\bar{b}_{02}+2\bar{a}_{00}\bar{b}_{12})+\bar{a}_{11}\bar{a}_{00}(2\bar{a}_{20}\bar{b}_{02}+\bar{a}_{00}\bar{b}_{22})}{\bar{a}_{01}^2}\\[2ex]
&\quad+\frac{\bar{a}_{00}\bar{a}_{11}^2(2\bar{a}_{10}\bar{b}_{02}+\bar{a}_{00}\bar{b}_{12})}{\bar{a}_{01}^3},\quad \bar{c}_{22}=\frac{\bar{b}_{22}}{\bar{a}_{01}}-\frac{\bar{a}_{11}\bar{b}_{12}}{\bar{a}_{01}^2}+\frac{\bar{a}_{11}^2(\bar{a}_{11}+\bar{b}_{02})}{\bar{a}_{01}^3},\\[2ex]
\bar{c}_{21}&=3\bar{a}_{30}+\bar{b}_{21}-\frac{\bar{a}_{20}(\bar{a}_{11}+2\bar{b}_{02})+2(\bar{a}_{10}\bar{b}_{12}+\bar{a}_{00}\bar{b}_{22})}{\bar{a}_{01}}-\frac{\bar{a}_{00}\bar{a}_{11}^2(\bar{a}_{11}+2\bar{b}_{02})}{\bar{a}_{01}^3}\\[2ex]
&\quad+\frac{\bar{a}_{11}\bar{a}_{10}(\bar{a}_{11}+2\bar{b}_{02})+2\bar{a}_{11}\bar{a}_{10}\bar{a}_{00}\bar{b}_{12}}{\bar{a}_{01}^2},\quad \bar{c}_{12}=\frac{\bar{b}_{12}}{\bar{a}_{01}}-\frac{\bar{a}_{11}(\bar{a}_{11}+\bar{b}_{02})}{\bar{a}_{01}^2},\\[2ex]
\end{aligned}
\end{array}
\end{equation*}
\begin{equation*}
\begin{array}{ll}
\begin{aligned}
\bar{c}_{41}&=5\bar{a}_{50}+\bar{b}_{41}-\frac{\bar{a}_{40}(\bar{a}_{11}+2\bar{b}_{02})+2(\bar{a}_{30}\bar{b}_{12}+\bar{a}_{20}\bar{b}_{22}+\bar{a}_{10}\bar{b}_{32})}{\bar{a}_{01}}\\[2ex]
&\quad+\frac{\bar{a}_{11}\bar{a}_{30}(\bar{a}_{11}+2\bar{b}_{02})+2\bar{a}_{11}(\bar{a}_{20}\bar{b}_{12}+\bar{a}_{10}\bar{b}_{22}+\bar{a}_{00}\bar{b}_{32})}{\bar{a}_{01}^2}-\frac{\bar{a}_{00}\bar{a}_{11}^4(\bar{a}_{11}+2\bar{b}_{02})}{\bar{a}_{01}^5}\\[2ex]
&\quad-\frac{\bar{a}_{11}^2\big(\bar{a}_{20}(\bar{a}_{11}+2\bar{b}_{02})+2(\bar{a}_{10}\bar{b}_{12}+\bar{a}_{00}\bar{b}_{22})\big)}{\bar{a}_{01}^3}+\frac{\bar{a}_{11}^3\big(\bar{a}_{10}(\bar{a}_{11}+2\bar{b}_{02})+2\bar{a}_{00}\bar{b}_{12}\big)}{\bar{a}_{01}^4},\\[2ex]
\bar{c}_{31}&=4\bar{a}_{40}+\bar{b}_{31}-\frac{\bar{a}_{30}(\bar{a}_{11}+2\bar{b}_{02})+2(\bar{a}_{20}\bar{b}_{12}+\bar{a}_{10}\bar{b}_{22}+\bar{a}_{00}\bar{b}_{32})}{\bar{a}_{01}}+\frac{\bar{a}_{00}\bar{a}_{11}^3(\bar{a}_{11}+2\bar{b}_{02})}{\bar{a}_{01}^4}\\[2ex]
&\quad+\frac{\bar{a}_{11}\bar{a}_{20}(\bar{a}_{11}+2\bar{b}_{02})+2\bar{a}_{11}(\bar{a}_{10}\bar{b}_{12}+\bar{a}_{00}\bar{b}_{22})}{\bar{a}_{01}^2}+\frac{\bar{a}_{11}^2\bar{a}_{10}(\bar{a}_{11}+2\bar{b}_{02})+2\bar{a}_{11}^2\bar{a}_{00}\bar{b}_{12}}{\bar{a}_{01}^3},\\[2ex]
&\quad+\frac{\bar{b}_{02}(\bar{a}_{20}^2+2\bar{a}_{10}\bar{a}_{30})+\bar{a}_{10}(2\bar{a}_{20}\bar{b}_{12}+\bar{a}_{10}\bar{b}_{22})+2\bar{a}_{00}(\bar{a}_{40}\bar{b}_{02}+\bar{a}_{30}\bar{b}_{12}+\bar{a}_{20}\bar{b}_{22}+\bar{a}_{10}\bar{b}_{32})}{\bar{a}_{01}}\\[2ex]
\end{aligned}
\end{array}
\end{equation*}
\begin{equation*}
\begin{array}{ll}
\begin{aligned}
&\quad-\frac{\bar{a}_{11}\big(\bar{a}_{10}^2\bar{b}_{12}+2\bar{a}_{10}(\bar{a}_{20}\bar{b}_{02}+\bar{a}_{00}\bar{b}_{22})+\bar{a}_{00}(2\bar{a}_{30}\bar{b}_{02}+2\bar{a}_{20}\bar{b}_{12}+\bar{a}_{00}\bar{b}_{32})\big)}{\bar{a}_{01}^2}\\[2ex]
&\quad+\frac{\bar{a}_{11}^2\bar{a}_{10}(\bar{a}_{10}\bar{b}_{02}+2\bar{a}_{00}\bar{b}_{12})+\bar{a}_{11}^2\bar{a}_{00}(2\bar{a}_{20}\bar{b}_{02}+\bar{a}_{00}\bar{b}_{22})}{\bar{a}_{01}^3}-\frac{\bar{a}_{00}\bar{a}_{11}^3(2\bar{a}_{10}\bar{b}_{02}+\bar{a}_{00}\bar{b}_{12})}{\bar{a}_{01}^4},\\[2ex]
\bar{c}_{50}&=\bar{a}_{11}\bar{b}_{40}+\bar{a}_{01}\bar{b}_{50}-\bar{a}_{50}\bar{b}_{01}-\bar{a}_{40}\bar{b}_{11}-\bar{a}_{30}\bar{b}_{21}-\bar{a}_{20}\bar{b}_{31}-\bar{a}_{10}\bar{b}_{41}-\frac{\bar{a}_{00}^2\bar{a}_{11}^5\bar{b}_{02}}{\bar{a}_{01}^6}\\[2ex]
&\quad+\frac{2\bar{b}_{02}(\bar{a}_{20}\bar{a}_{30}+\bar{a}_{10}\bar{a}_{40}+\bar{a}_{00}\bar{a}_{50})}{\bar{a}_{01}}+\frac{\bar{b}_{12}(\bar{a}_{20}^2+2\bar{a}_{10}\bar{a}_{30}+2\bar{a}_{00}\bar{a}_{40})}{\bar{a}_{01}}\\[2ex]
&\quad+\frac{2\bar{b}_{22}(\bar{a}_{10}\bar{a}_{20}+\bar{a}_{00}\bar{a}_{30})+\bar{b}_{32}(\bar{a}_{10}^2+2\bar{a}_{00}\bar{a}_{20})}{\bar{a}_{01}}+\frac{\bar{a}_{11}\bar{b}_{02}(\bar{a}_{20}^2+2\bar{a}_{10}\bar{a}_{30})}{\bar{a}_{01}^2}\\[2ex]
&\quad+\frac{2\bar{a}_{11}\bar{b}_{12}(\bar{a}_{10}\bar{a}_{20}+\bar{a}_{00}\bar{a}_{30})+\bar{a}_{11}\bar{b}_{22}(\bar{a}_{10}^2+2\bar{a}_{00}\bar{a}_{20})+2\bar{a}_{00}\bar{a}_{11}(\bar{a}_{40}\bar{b}_{02}+\bar{a}_{10}\bar{b}_{32})}{\bar{a}_{01}^2}\\[2ex]
&\quad+\frac{\bar{a}_{11}^2\big(2\bar{b}_{02}(\bar{a}_{10}\bar{a}_{20}+\bar{a}_{00}\bar{a}_{30})+\bar{b}_{12}(\bar{a}_{10}^2+2\bar{a}_{00}\bar{a}_{20}+\bar{a}_{00}(2\bar{a}_{10}\bar{b}_{22}+\bar{a}_{00}\bar{b}_{32}))\big)}{\bar{a}_{01}^3}\\[2ex]
&\quad-\frac{\bar{a}_{11}^3\big(\bar{a}_{10}(\bar{a}_{10}\bar{b}_{02}+2\bar{a}_{00}\bar{b}_{12})+\bar{a}_{00}(2\bar{a}_{20}\bar{b}_{02}+\bar{a}_{00}\bar{b}_{22})\big)}{\bar{a}_{01}^4}+\frac{\bar{a}_{00}\bar{a}_{11}^4(2\bar{a}_{10}\bar{b}_{02}+\bar{a}_{00}\bar{b}_{12})}{\bar{a}_{01}^5},\\[2ex]
\bar{c}_{40}&=\bar{a}_{11}\bar{b}_{30}+\bar{a}_{01}\bar{b}_{40}-\bar{a}_{40}\bar{b}_{01}-\bar{a}_{30}\bar{b}_{11}-\bar{a}_{20}\bar{b}_{21}-\bar{a}_{10}\bar{b}_{31}-\bar{a}_{00}\bar{b}_{41}+\frac{\bar{a}_{00}^2\bar{a}_{11}^4\bar{b}_{02}}{\bar{a}_{01}^5}.\\[2ex]
\end{aligned}
\end{array}
\end{equation*}

\section{The coefficients $\bar{l}_{ij}$ for system \eqref{2.23}}
\begin{equation*}
\begin{array}{ll}
\begin{aligned}
\bar{l}_{20}&=\bar{k}_{20}+\frac{9\bar{k}_{00}\bar{k}_{30}^2}{16\bar{k}_{20}^2}-\frac{3\bar{k}_{00}\bar{k}_{40}}{5\bar{k}_{20}}-\frac{3\bar{k}_{10}\bar{k}_{30}}{4\bar{k}_{20}},\quad \bar{l}_{40}=-\frac{\bar{k}_{10}(55\bar{k}_{30}^3-96\bar{k}_{20}\bar{k}_{30}\bar{k}_{40}+40\bar{k}_{20}^2\bar{k}_{50})}{48\bar{k}_{20}^3},\\[2ex]
\end{aligned}
\end{array}
\end{equation*}
\begin{equation*}
\begin{array}{ll}
\begin{aligned}
\bar{l}_{21}&=\bar{k}_{21}-\frac{3\big(5\bar{k}_{30}(4\bar{k}_{11}\bar{k}_{20}-3\bar{k}_{01}\bar{k}_{30})+16\bar{k}_{01}\bar{k}_{20}\bar{k}_{40}\big)}{80\bar{k}_{20}^2},\quad \bar{l}_{01}=\bar{k}_{01},\quad \bar{l}_{11}=\bar{k}_{11}-\frac{\bar{k}_{01}\bar{k}_{30}}{2\bar{k}_{20}},\\[2ex]
\bar{l}_{30}&=\frac{6\bar{k}_{10}\bar{k}_{20}(35\bar{k}_{30}^2-32\bar{k}_{20}\bar{k}_{40})-\bar{k}_{00}(175\bar{k}_{30}^3-336\bar{k}_{20}\bar{k}_{30}\bar{k}_{40}+160\bar{k}_{20}^2\bar{k}_{50})}{240\bar{k}_{20}^3},\quad \bar{l}_{00}=\bar{k}_{00},\\[2ex]
\bar{l}_{31}&=\frac{1}{240\bar{k}_{20}^3}\big(16\bar{k}_{20}^2(15\bar{k}_{20}\bar{k}_{31}-12\bar{k}_{11}\bar{k}_{40}-10\bar{k}_{01}\bar{k}_{50})-175\bar{k}_{01}\bar{k}_{30}^3\\[2ex]
&\quad-6\bar{k}_{20}\bar{k}_{30}(40\bar{k}_{20}\bar{k}_{21}-35\bar{k}_{11}\bar{k}_{30}-56\bar{k}_{01}\bar{k}_{40})\big),\quad l_{10}=\bar{k}_{10}-\frac{\bar{k}_{00}\bar{k}_{30}}{2\bar{k}_{20}},\\[2ex]
\bar{l}_{50}&=\frac{\bar{k}_{10}\big(2425\bar{k}_{30}^4+768\bar{k}_{20}^2\bar{k}_{40}^2+1600\bar{k}_{20}\bar{k}_{30}(\bar{k}_{20}\bar{k}_{50}-3\bar{k}_{30}\bar{k}_{40})\big)}{6400\bar{k}_{20}^4},\\[2ex]
\bar{k}_{41}&=\frac{1}{48\bar{k}_{20}^3}\big(8\bar{k}_{20}^2(6\bar{k}_{20}\bar{k}_{41}-5\bar{k}_{11}\bar{k}_{50})-48\bar{k}_{20}\bar{k}_{40}(\bar{k}_{20}\bar{k}_{21}-2\bar{k}_{11}\bar{k}_{30})\\[2ex]
&\quad-5\bar{k}_{30}(11\bar{k}_{11}\bar{k}_{30}^2+12\bar{k}_{20}^2\bar{k}_{31}-12\bar{k}_{20}\bar{k}_{21}\bar{k}_{30})\big).
\end{aligned}
\end{array}
\end{equation*}

\section{The coefficients $\bar{p}_{ij}$ and $\bar{q}_{ij}$ for system \eqref{4.4}}
\begin{equation*}
\begin{array}{ll}
\begin{aligned}
\bar{p}_{11}&=\frac{2z+\beta+\eta}{(z+\eta)(z+\beta)},\quad \bar{p}_{02}=\frac{\sqrt{(1+\eta)z^2+2\beta\eta z+\beta^2\eta-\beta\eta}}{(z+\beta)\sqrt{z+\eta}\sqrt{\beta-\beta^2-2\beta z-z^2}},\\[2ex]
\bar{p}_{12}&=\frac{\sqrt{(1+\eta)z^2+2\beta\eta z+\beta^2\eta-\beta\eta}}{(z+\beta)(z+\eta)^{3/2}\sqrt{\beta-\beta^2-2\beta z-z^2}},\quad \bar{p}_{21}=\frac{1}{(z+\beta)(z+\eta)},\\[2ex]
\bar{q}_{20}&=\frac{(1-\beta)\beta\eta^2-\beta\eta(2\beta+3\eta-2)z-\eta(2+5\beta+2\eta)z^2-3(1+h)z^3}{z(z+\eta)\big((1+\eta)z^2+2\beta\eta z+\beta^2\eta-\beta\eta\big)},\\[2ex]
\bar{q}_{11}&=\frac{\bar{\omega}_1}{z(z+\beta)(z+\eta)^{3/2}\bar{\omega}_4},\quad \bar{q}_{12}=\frac{\bar{\omega}_3}{z(z+\beta)(z+\eta)^2(\beta^2+2\beta z+z^2-\beta)},\\[2ex]
\bar{q}_{02}&=\frac{2z^4+(6\beta+\eta)z^3+\beta(6\beta+3\eta-1)z^2+\beta^2(2\beta+3\eta-2)z+(\beta-1)\beta^2\eta}{z(z+\beta)(z+\eta)(z^2+2\beta z+\beta^2-\beta)},\\[2ex]
\bar{q}_{21}&=\frac{\bar{\omega}_2}{z(z+\beta)(z+\eta)^{3/2}\bar{\omega}_4},\quad \bar{q}_{30}=\frac{\beta\eta(1-\beta-\eta)-(1+4\beta+\eta)\eta z-3(1+\eta)z^2}{z(z+\eta)((1+\eta)z^2+2\beta\eta z+\beta^2\eta-\beta\eta)},\\[2ex]
\bar{q}_{03}&=\frac{\big(\beta(1-\beta-\eta)-(4\beta+\eta)z-3z^2\big)\sqrt{(1+\eta)z^2+2\beta\eta z+\beta^2\eta-\beta\eta}}{z(z+\eta)^{3/2}(\beta-\beta^2-2\beta z-z^2)^{3/2}},\\[2ex]
\bar{q}_{40}&=-\frac{\beta\eta+\eta z+z}{z(z+\eta)((1+\eta)z^2+2\beta\eta z+\beta^2\eta-\beta\eta)},\quad \bar{q}_{22}=\frac{3(2\beta\eta+z+\beta z+2\eta z+z^2)}{z(z+\eta)^2(\beta^2+2\beta z+z^2-\beta)},\\[2ex]
\bar{q}_{13}&=-\frac{(4\beta\eta+z+3\beta z+4\eta z+3z^2)\sqrt{(1+\eta)z^2+2\beta\eta z+\beta^2\eta-\beta\eta}}{z(z+\eta)^{5/2}(\beta-\beta^2-2\beta z-z^2)^{3/2}},\\[2ex]
\bar{q}_{04}&=-\frac{(z+\beta)((1+\eta)z^2+2\beta\eta z+\beta^2\eta-\beta\eta)}{z(z+\eta)^2(z^2+2\beta z+\beta^2-\beta)^2},\quad \bar{q}_{31}=-\frac{4\beta\eta+3z+\beta z+4\eta z+z^2}{z(z+\eta)^{3/2}\bar{\omega}_4},\\[2ex]
\end{aligned}
\end{array}
\end{equation*}
\begin{equation*}
\begin{array}{ll}
\begin{aligned}
\omega_1&=(1-\beta)\beta\eta^2-(4\beta^2-\eta+7\beta\eta-4\beta)\beta\eta z-(\beta^2+13\beta\eta+8\eta^2-\beta)\beta z^2\\[2ex]
&\quad-(3\beta+3\beta^2+2\eta+14\beta\eta+3\eta^2)z^3-(3+3\beta+5\eta)z^4-z^5,\\[2ex]
\omega_2&=3\beta^2\eta(1-\beta-\eta)-(\beta^2+15\beta\eta+6\eta^2-\beta)\beta z-(6\beta+4\beta^2+2\eta+21\beta\eta+3\eta^2)z^2\\[2ex]
&\quad-(6+5\beta+9\eta)z^3-2z^4,\quad \omega_4=\sqrt{\beta-\beta^2-2\beta z-z^2}\sqrt{(1+\eta)z^2+2\beta\eta z+\beta^2\eta-\beta\eta},\\[2ex]
\omega_3&=3\beta^2\eta(\beta+\eta-1)+(2\beta^2+16\beta\eta+6\eta^2-\eta-2\beta)\beta z+(2\beta+9\beta^2+\eta+23\beta\eta+3\eta^2)z^2\\[2ex]
&\quad+(3+12\beta+10\eta)z^3+5z^4.
\end{aligned}
\end{array}
\end{equation*}

\section{The coefficients $p_{ij}$ and $q_{ij}$ for system \eqref{3.3}}
\begin{equation*}
\begin{array}{ll}
\begin{aligned}
p_{11}&=\frac{1}{z+\eta}-\omega_1,\quad p_{02}=-\frac{\sqrt{z\gamma-\delta}\omega_1}{\sqrt{\delta}},\quad p_{21}=-\omega_2,\quad p_{12}=-\frac{\sqrt{z\gamma-\delta}\omega_2}{\sqrt{\delta}},\quad q_{12}=\frac{\omega_2\omega_7}{\delta(z+\delta)},\\[2ex]
q_{20}&=-\frac{2z+\eta}{z(z+\eta)}+\frac{(1+\gamma)z\omega_1}{z\gamma-\delta},\quad q_{11}=\frac{\omega_2\omega_5}{(z+\delta)\sqrt{\delta(z\gamma-\delta)}},\quad q_{02}=\frac{2z+\eta}{z(z+\eta)}+\frac{(z+\delta)\omega_1}{\delta},\\[2ex]
q_{30}&=\frac{\omega_2\omega_4}{(z+\delta)(z\gamma-\delta)},\quad q_{21}=\frac{\omega_2\omega_6}{(z+\delta)\sqrt{\delta(z+\delta)}},\quad q_{04}=\frac{(z\gamma-\delta)\omega_2}{\delta^2},\quad q_{03}=\frac{\sqrt{z\gamma-\delta}\omega_2\omega_3}{\delta^{3/2}(z+\delta)},\\[2ex]
q_{40}&=\frac{(1+\gamma)\omega_2}{z\gamma-\delta},\quad q_{31}=\frac{(4+3\gamma)\omega_2}{\sqrt{\delta(z\gamma-\delta)}},\quad q_{22}=\frac{3\omega_2(2+\gamma)}{\delta},\quad q_{13}=\frac{(4+\gamma)\sqrt{z\gamma-\delta}\omega_2}{\delta^{3/2}},\\[2ex]
\omega_1&=-\frac{z+\delta}{z(1-z-z\gamma-\gamma\eta)},\quad\omega_2=-\frac{z+\delta}{z(z+\eta)(1-z-z\gamma-\gamma\eta)},\quad \omega_6=\omega_3+2\omega_4+\delta(z+\delta),\\[2ex]
\omega_3&=2z^2+(\gamma\delta+3\delta+\eta)z+(\gamma\eta+\eta-1)\delta,\quad \omega_7=2\omega_3+\omega_4+\delta(z+\delta),\\[2ex]
\omega_4&=(2+\gamma-\gamma^2)z^2+(3\delta+\eta+\gamma+3\gamma\delta+\gamma\eta-\gamma^2\eta)z+(2\gamma\eta+\eta-1)\delta,\\[2ex]
\omega_5&=(2-\gamma-2\gamma^2)z^3+(6\delta+2\eta+2\gamma+4\gamma\delta-3\gamma^2\eta)z^2+(2\gamma\eta+\delta-2)\delta\eta\\[2ex]
&\quad+(\delta^2+\gamma\eta-\gamma^2\eta^2-3\delta+5\delta\eta+6\gamma\delta\eta)z.
\end{aligned}
\end{array}
\end{equation*}

\section{The factor $l_{22}$ for system \eqref{4.13}}
\begin{equation*}
\begin{array}{ll}
\begin{aligned}
l_{22}&=2531480345458623423 + 269823922452908179\sqrt{89}-24737925952523844529\gamma\\[2ex]
&\quad-2539236122628595229\sqrt{89}\gamma+73958236899762541604\gamma^2+9447664321574111828\sqrt{89}\gamma^2\\[2ex]
&\quad-182388316387159279040\gamma^3-3670919638546547392\sqrt{89}\gamma^3-399742567746445586944\gamma^4\\[2ex]
&\quad+44233822437435913728\sqrt{89}\gamma^4-1194548294202179645440\gamma^5+152811286517601521664\sqrt{89}\gamma^5\\[2ex]
&\quad-2469142638364472459264\gamma^6+236521231547966144512\sqrt{89}\gamma^6-1762598345423458140160\gamma^7\\[2ex]
&\quad+193258260590076100608\sqrt{89}\gamma^7+393200766880938065920\gamma^8-38853225167809019904\sqrt{89}\gamma^8\\[2ex]
&\quad+1077490108584619016192\gamma^9-116378157743404613632\sqrt{89}\gamma^9+188948390068969930752\gamma^{10}\\[2ex]
\end{aligned}
\end{array}
\end{equation*}
\begin{equation*}
\begin{array}{ll}
\begin{aligned}
&\quad-19850495794229411840\sqrt{89}\gamma^{10}-144752116256103989248\gamma^{11}+15549277750838689792\sqrt{89}\gamma^{11}\\[2ex]
&\quad-28488856548252975104\gamma^{12}+3047521373377658880\sqrt{89}\gamma^{12}+9887894139202174976\gamma^{13}\\[2ex]
&\quad-1046198106515832832\sqrt{89}\gamma^{13}+164450307778019328\gamma^{14}-17751260494036992\sqrt{89}\gamma^{14}\\[2ex]
&\quad-92388989340745728\gamma^{15}+9738715778777088\sqrt{89}\gamma^{15}.
\end{aligned}
\end{array}
\end{equation*}
\end{appendices}


\begin{thebibliography}{99}


\bibitem{PE}
P. Aguirre, E. Gonz$\acute{a}$lez-Olivares and E. S$\acute{a}$ez, Two limit cycles in a Leslie-Gower predator-prey model with additive Allee effect, Nonlinear Anal. Real World Appl., 10 (2009), 1401--1416.

\bibitem{PEE}
P. Aguirre, E. Gonz$\acute{a}$lez-Olivares and E. S$\acute{a}$ez, Three limit cycles in a Leslie-Gower predator-prey model with additive Allee effect, SIAM J. Appl. Math., 69 (2009), 1244--1262.

\bibitem{CE}
C. Arancibia-Ibarra and E. Gonz$\acute{a}$lez-Olivares, A modified Lesie-Gower predator-prey model with hyperbolic functional
response and Allee effect on prey, BIOMAT 2010, in: Int. Symp. Math. Comput. Biol., 2011, 146--162.

\bibitem{AD}
M. A. Aziz-Alaoui and M. D. Okiye, Boundedness and global stability for a predator-prey model with modified Leslie-Gower and Holling-type II schemes, Appl. Math. Lett., 16 (2003), 1069--1075.

\bibitem{RI}
R. I. Bogdanov, Bifurcation of the limit cycle of a family of plane vector field, Sel. Math. Sov., 4 (1981), 373--387.

\bibitem{PA}
P. A. Braza, The bifurcation structure of the holling-tanner model for predator-prey interactions using two-timing, SIAM J. Appl. Math., 63 (2003), 889--904.

\bibitem{LEF}
L. Berec, E. Angulo and F. Courchamp, Multiple Allee effects and population management, Trends in Ecology Evol., 22 (2007), 185--191.




\bibitem{SNCD}
S. N. Chow, C. Li and D. Wang, Normal forms and bifurcation of planar vector fields, Cambridge University Press 1994.

\bibitem{YC}
Y. Cai, C. Zhao, W. Wang and J. Wang, Dynamics of a Leslie-Gower predator-prey model with additive Allee effect, Appl. Math. Model., 39 (2015), 2092--2106.


\bibitem{B}
B. Dennis, Allee effects: population growth, critical density, and the chance of extinction, Nat. Resour. Model., 3(1989), 481--538.

\bibitem{AF}
A. Deredec and F. Courchamp, Importance of the Allee effect for reintroductions, Ecoscience, 14 (2007), 440--451.


\bibitem{YY}
Y. Dai and Y. Zhao, Hopf cyclicity and global dynamics for a predator-prey system of Leslie type with simplified Holling type IV functional response, Int. J. Bifur. Chaos, 28 (2018), 1850166.


\bibitem{IM}
H. I. Freedman and R. M. Mathsen, Persistence in predator-prey systems with ratio-dependent predator influence, Bull. Math. Biol., 55 (1993), 817--827.

\bibitem{XL}
X. Guo, L. Ding, Y. Hui and X. Song, Dynamics of an amensalism system with strong Allee effect and nonlinear growth rate in deterministic and fluctuating environment, Nonlinear Dyn., 2024, 1--20.

\bibitem{AE}
A. Gasull, R. E. Kooij and J. Torregrosa, Limit cycles in the Holling-Tanner model, Publicacions Matematiques, 1997, 149--167.


\bibitem{HHH}
I. Hanski, L. Hansson and H. Henttonen, Specialist predators, generalist predators, and the microtine rodent cycle, J. Anim. Ecol. 60 (1991), 353-367.

\bibitem{BW}
S. B. Hsu and T. W. Huang, Global stability for a class of predator-prey systems, SIAM J. Appl. Math., 55 (1995), 763--783.

\bibitem{ST}
S. B. Hsu and T. W. Hwang, Uniqueness of limit cycles for a predator-prey system of Holling and Leslie type, Canad. Appl. Math. Quart., 6 (1998), 91--117.

\bibitem{JMC}
H. Huang, M. Lu, C. Xiang and L. Zou, Bifurcations of codimension 4 in a Leslie-type predator-prey model with Allee effects, J. Differ. Equ., 414 (2025), 201--241.

\bibitem{JS}
J. Huang, S. Ruan and J. Song, Bifurcations in a predator-prey system of Leslie type with generalized Holling type III functional response, J. Differ. Equ., 257 (2014), 1721--1752.

\bibitem{JX}
J. Huang, X. Xia, X. Zhang and S. Ruan, Bifurcation of codimension 3 in a predator-prey system of Leslie type with simplified Holling type IV functional response, Int. J. Bifur. Chaos, 26 (2016), 1650034.

\bibitem{XM}
X. Jia, M. Zhao and K. Huang, Bifurcation analysis and simulations of a modified Leslie-Gower predator-prey model with constant type prey harvesting, Math. Methods Appl. Sci., 46 (2023), 18789--18814.

\bibitem{A}
A. Korobeinikov, A Lyapunov function for Leslie-Gower predator-prey models, Appl. Math. Lett., 14 (2001), 697--700.

\bibitem{JK}
M. J. Khanghahi and R. K. Ghaziani, Bifurcation analysis of a modified May-Holling-Tanner predator-prey model with Allee effect, B. Iran. Math. Soc., 48 (2022), 3405--3437.


\bibitem{AJA}
A. J. Lotka, Elements of physical biology, Williams and Wilkins, 1925.

\bibitem{T}
T. Lindstr$\ddot{o}$m, Qualitative analysis of a predator-prey system with limit cycles, J. Math. Biol., 31 (1993), 541--561.

\bibitem{MJH}
M. Lu, J. Huang and H. Wang, An organizing center of codimension four in a predator-prey model with generalist predator: from tristability and quadristability to transients in a nonlinear environmental change, SIAM J. Appl. Math., 22 (2023), 694--729.

\bibitem{YD}
Y. Li and D. Xiao, Bifurcations of a predator-prey system of Holling and Leslie types, Chaos Solitons Fractals, 34(2007), 606--620.

\bibitem{PH}
P. Pekkarinen and J. Heikkil$\ddot{a}$, Prey selection of the least weasel {\it Mustela nivalis} in the laboratory, Acta Theriol., 42 (1997), 179-188.

\bibitem{EE}
E. S$\acute{a}$ez and E. Gonz$\acute{a}$lez-Olivares, Dynamics of a predator-prey model, SIAM J. Appl. Math., 59 (1999), 1867--1878.

\bibitem{AJ}
P. A. Stephens, W. J. Sutherland and R. P. Freckleton, What is the Allee effect? Oikos, 1999, 185--190.

\bibitem{PW}
P. A. Stephens and W. J. Sutherland, Consequences of the Allee effect for behaviour, ecology and conservation, Trends Ecology Evol., 14 (1999), 401--405.

\bibitem{AI}
A. Suryanto, I. Darti and S. Anam, Stability Analysis of a Fractional Order Modified Leslie-Gower Model with Additive Allee Effect, Int. J. Math. Math. Sci., 2017 (2017), 8273430.


\bibitem{ZY}
Z. Shang and Y. Qiao, Bifurcation analysis of a Leslie-type predator-prey system with simplified Holling type IV functional response and strong Allee effect on prey, Nonlinear Anal. Real World Appl., 64 (2022), 103453.

\bibitem{JP}
J. Tian and P. Liu, Global bifurcation in a modified Leslie-Gower predator-prey model, Int. J. Bifurcation and Chaos, 33 (2023), 2350016.


\bibitem{V}
V. Volterra, Fluctuations in the abundance of a species considered mathematically, Nature, 119 (1927), 12--13.

\bibitem{GSK}
G. S. K. Wolkowicz, Bifurcation analysis of a predator-prey system involving group defence, SIAM J. Appl. Math., 48 (1988), 592--606.

\bibitem{HM}
M. H. Wang and M. Kot, Speeds of invasion in a model with strong or weak Allee effects, Math. Biosci., 171 (2001), 83--97.

\bibitem{DS}
D. Xiao and S. Ruan, Global analysis in a predator-prey system with nonmonotonic functional response, SIAM J. Appl. Math., 61 (2001), 1445--1472.

\bibitem{CMJ}
C. Xiang, M. Lu and J. Huang, Degenerate Bogdanov-Takens bifurcation of codimension 4 in Holling-Tanner model with harvesting, J. Differ. Equ., 314 (2022), 370--417.



\bibitem{ZT}
Z. Zhang, T. Ding, W. Huang and Z. Dong, Qualitative Theory of Differential Equations, vol. 101, Amer. Math. Soc., Providence, RI, 1992.

\bibitem{HA}
H. Zhu, S. A. Campbell and G. S. K. Wolkowicz, Bifurcation analysis of a predator-prey system with nonmonotonic functional response, SIAM J. Appl. Math., 63 (2003), 636--682.

\bibitem{ZZ}
Z. Zhu, L. Zou, From mating difficulty in prey to socillations in both prey and predator populations: bifurcations of a predator-prey system, J. Math. Anal. Appl., 550 (2025), 129531.

\end{thebibliography}
\end{document}